\newtheorem{theorem}{Theorem}
\newtheorem{lemma}{Lemma}
\newtheorem{proposition}{Proposition}
\newtheorem{corollary}{Corollary}
\newtheorem{definition}{Definition}
\newtheorem{remark}{Remark}
\def\QED{~\rule[-1pt]{5pt}{5pt}\par\medskip}
\newenvironment{proof}{\emph{Proof.}}{\hfill\QED}
\renewcommand{\S}{\mathcal{S}}
\newcommand{\A}{\boldsymbol{A}}
\newcommand{\B}{\boldsymbol{B}}
\newcommand{\D}{\boldsymbol{D}}
\newcommand{\E}{\boldsymbol{E}}
\newcommand{\G}{\boldsymbol{G}}
\newcommand{\I}{\boldsymbol{I}}
\renewcommand{\a}{\boldsymbol{a}}
\renewcommand{\b}{\boldsymbol{b}}
\renewcommand{\c}{\boldsymbol{c}}
\newcommand{\e}{\boldsymbol{e}}
\newcommand{\s}{\boldsymbol{s}}
\newcommand{\x}{\boldsymbol{x}}
\newcommand{\y}{\boldsymbol{y}}
\newcommand{\z}{\boldsymbol{z}}
\renewcommand{\Re}{\mathbb{R}}
\newcommand{\0}{\boldsymbol{0}}
\newcommand{\ie}{\text{i.e.}}
\newcommand{\eg}{\text{e.g.}}
\newcommand{\rank}{\operatorname{rank}}
\newcommand{\st}{\operatorname{s. t.}}
\newcommand{\spn}{\operatorname{span}}
\newcommand{\argmin}{\operatorname{argmin}}
\newcommand{\myparagraph}[1]{\medskip\noindent\textbf{#1.} }
\def\x{{\mathbf x}}
\begin{document}

\title{Block-Sparse Recovery via Convex Optimization}

\author{Ehsan~Elhamifar,~\IEEEmembership{Student~Member,~IEEE,}
        and~Ren\'e~Vidal,~\IEEEmembership{Senior Member,~IEEE}
\thanks{E. Elhamifar is with the Department
of Electrical and Computer Engineering, The Johns Hopkins University, 3400 N.
Charles St., Baltimore MD 21218 USA. E-mail: ehsan@cis.jhu.edu. 
}
\thanks{R. Vidal is with the Center for Imaging Science, Department of Biomedical
Engineering, The Johns Hopkins University, 302B Clark Hall, 3400 N.
Charles St., Baltimore MD 21218 USA. E-mail: rvidal@cis.jhu.edu.}
}

\maketitle

\vspace{-1mm}

\begin{abstract}
Given a dictionary that consists of multiple blocks and a signal that lives in the range space of only a few blocks, we study the problem of finding a block-sparse representation of the signal, i.e., a representation that uses the minimum number of blocks. Motivated by signal/image processing and computer vision applications, such as face recognition, we consider the block-sparse recovery problem in the case where the number of atoms in each block is arbitrary, possibly much larger than the dimension of the underlying subspace. To find a block-sparse representation of a signal, we propose two classes of non-convex optimization programs, which aim to minimize the number of nonzero coefficient blocks and the number of nonzero reconstructed vectors from the blocks, respectively. Since both classes of problems are NP-hard, we propose convex relaxations and derive conditions under which each class of the convex programs is equivalent to the original non-convex formulation. Our conditions depend on the notions of mutual and cumulative \emph{subspace} coherence of a dictionary, which are natural generalizations of existing notions of mutual and cumulative coherence. We evaluate the performance of the proposed convex programs through simulations as well as real experiments on face recognition. We show that treating the face recognition problem as a block-sparse recovery problem improves the state-of-the-art results by $10\%$ with only $25\%$ of the training data.

\end{abstract}

\begin{IEEEkeywords}
Block-sparse signals, convex optimization, subspaces, principal angles, face recognition.
\end{IEEEkeywords}

\IEEEpeerreviewmaketitle


\section{Introduction}

\subsection{Recovery of Sparse Signals}

Sparse signal recovery has drawn increasing attention in many areas such as signal/image processing, computer vision, machine learning, and bioinformatics (see \eg, \cite{Candes:SPM08, Wright:IEEEProc10, Elad:IEEEProc10, Parvaresh:STSP08} and the references therein). The key assumption behind sparse signal recovery is that an observed signal $\y$ can be written as a linear combination of a few atoms of a given dictionary $\B$. 


More formally, consider an underdetermined system of linear equations of the form $\y = \B \c$, where $\B \in \Re^{D \times N}$ has more columns than rows, hence allowing infinitely many solutions for a given $\y$.  Sparsity of the desired solution  
arises in many problems and can be used to restrict the set of possible solutions. In principle, the problem of finding the sparsest representation of a given signal can be cast as the following optimization program 
\begin{equation}
\label{eq:PL0}
P_{\ell_0}: \; \min \| \c \|_0 \quad \st \quad \y = \B \c ,
\end{equation}
where $\| \c \|_0$ is the $\ell_0$ quasi-norm of $\c$, which counts the number of nonzero elements of $\c$. We say that a vector $\c$ is $k$-sparse if it has at most $k$ nonzero elements. While finding the sparse representation of a given signal using $P_{\ell_0}$ is NP-hard \cite{Amaldi:TCS98}, the pioneering work of Donoho \cite{Donoho:CPAM06} and Candes \cite{Candes-Tao:TIT05} showed that, under appropriate conditions, this problem can be solved efficiently as\!
\begin{equation}
\label{eq:PL1}
P_{\ell_1}: \; \min \| \c \|_1 \quad \st \quad \y = \B \c.
\end{equation}
%
Since then, there has been an outburst of research articles addressing conditions under which the two optimization programs, $P_{\ell_1}$ and $P_{\ell_0}$, are equivalent. Most of these results are based on the notions of mutual/cumulative coherence \cite{Tropp:TIT04, Donoho:TIT06} and restricted isometry property \cite{Candes-Tao:TIT05, Candes:CRAS08}, which we describe next. Throughout the paper, we assume that the columns of $\B$ have unit Euclidean norm.

\myparagraph{Mutual/Cumulative Coherence} 
The mutual coherence of a dictionary $\B$ is defined as 
\begin{equation}
\label{eq:mutcoh}
\mu \triangleq \max_{i \neq j} | \b_i^{\top} \b_j |,
\end{equation}
where $\b_i$ denotes the $i$-th column of $\B$ of unit Euclidean norm. \cite{Tropp:TIT04} and \cite{Donoho:TIT06} show that if the sufficient condition 
\begin{equation}
\label{eq:mut-coh-conventional}
(2k-1) \mu < 1,
\end{equation} 
holds, then the optimization programs $P_{\ell_1}$ and $P_{\ell_0}$ are equivalent and recover the $k$-sparse representation of a given signal. While $\mu$ can be easily computed, it does not characterize a dictionary very well since it measures the most extreme correlations in the dictionary. 

To better characterize a dictionary, cumulative coherence measures the maximum total coherence between a fixed atom and a collection of $k$ other atoms. Specifically, the cumulative coherence associated to a positive integer $k$ \cite{Tropp:TIT04} is defined as 
\begin{equation}
\label{eq:cumcoh}
\zeta_k \triangleq \max_{\Lambda_k} \max_{i \notin \Lambda_k} \sum_{j \in \Lambda_k} | \b_i^{\top} \b_j |,
\end{equation}
where $\Lambda_k$ denotes a set of $k$ different indices from $\{1, \ldots, N \}$. Note that for $k=1$, we have $\zeta_1 = \mu$. 
Although cumulative coherence is, in general, more difficult to compute than mutual coherence, it provides sharper results for the equivalence of $P_{\ell_1}$ and $P_{\ell_0}$. In particular, \cite{Tropp:TIT04} shows that if 
\begin{equation}
\label{eq:cum-coh-conventional}
\zeta_k + \zeta_{k-1} < 1,
\end{equation}
then the optimization programs $P_{\ell_1}$ and $P_{\ell_0}$ are equivalent and recover the $k$-sparse representation of a given signal. 

\myparagraph{Restricted Isometry Property} 
An alternative sufficient condition for the equivalence between $P_{\ell_1}$ and $P_{\ell_0}$ is based on the so-called restricted isometry property (RIP) \cite{Candes-Tao:TIT05,Candes:CRAS08}. For a positive integer $k$, the restricted isometry constant of a dictionary $\B$ is defined as the smallest constant $\delta_k$ for which
\begin{equation}
\label{eq:RIP}
(1 - \delta_k) \| \c \|_2^2 \leq \| \B \c \|_2^2 \leq (1 + \delta_k) \| \c \|_2^2
\end{equation}
holds for all $k$-sparse vectors $\c$. \cite{Candes:CRAS08} shows that if $\delta_{2k} < \sqrt{2}-1$, then $P_{\ell_1}$ and $P_{\ell_0}$ are equivalent. The bound in this result has been further improved and \cite{Foucart:ACHA10} shows that if $\delta_{2k} < 0.4652$, then $P_{\ell_1}$ and $P_{\ell_0}$ are equivalent.

\subsection{Recovery of Block-Sparse Signals}

Recently, there has been growing interest in recovering sparse representations of signals in a union of a large number of subspaces, under the assumption that the signals live in the direct sum of only a few subspaces. Such a representation whose nonzero elements appear in a few blocks is called a \emph{block-sparse} representation. Block sparsity arises in various applications such as reconstructing multi-band signals \cite{Mishali:TSP09, Eldar:STSP10}, measuring gene expression levels \cite{Parvaresh:STSP08}, face/digit/speech recognition \cite{Wright:PAMI09, Elhamifar:CVPR11, Gemmeke:ISCA08,Gemmeke:EUSIPCO08}, clustering of data on multiple subspaces \cite{Elhamifar:CVPR09, Elhamifar:ICASSP10, Elhamifar:TPAMI12, Candes:TechRep11}, finding exemplars in datasets \cite{Elhamifar:CVPR12}, multiple measurement vectors \cite{Cotter:TSP05, Chen:TSP06, VandenBerg:TIT10, Lai:ACHA11}, etc.


The recovery of block-sparse signals involves solving a system of linear equations of the form 
\begin{equation}
\label{eq:yBc}
\y = \B \c = \begin{bmatrix} \B[1] & \cdots & \B[n] \end{bmatrix} \c,
\end{equation}
where $\B$ consists of $n$ blocks $\B[i] \in \Re^{D \times m_i}$. 
%
%
The main difference with respect to classical sparse recovery is that the desired solution of \eqref{eq:yBc} corresponds to a few nonzero \emph{blocks} rather than a few nonzero \emph{elements} of $\B$. We say that a vector $\c^{\top} = \begin{bmatrix} \c[1]^{\top} & \cdots & \c[n]^{\top} \end{bmatrix}$ is $k$-\emph{block-sparse}, if at most $k$ blocks $\c[i] \in \Re^{m_i}$ are different from zero. Note that, in general, a block-sparse vector is not necessarily sparse and vice versa, as shown in Figure \ref{fig:blk}.

\begin{figure}[t]
\centering
\includegraphics[width=0.9\linewidth, trim = 0 0 0 0 , clip]{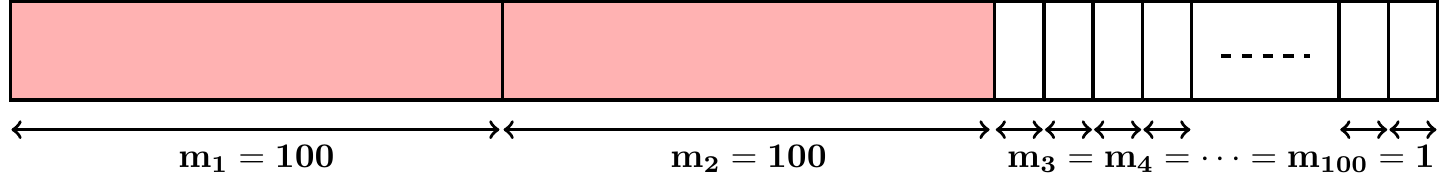} \\
\vspace{2mm}
\hspace{.1mm}
\includegraphics[width=0.9\linewidth, trim = 0 0 0 0 , clip]{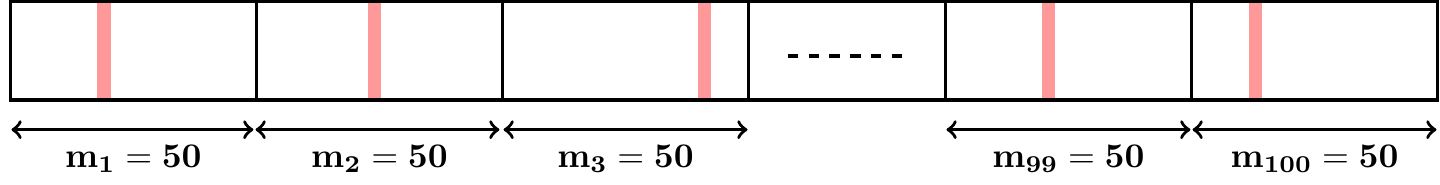}
\vspace{-1mm}
\caption{\footnotesize{Top: a block-sparse vector is not necessarily sparse. In this example, $2$ nonzero blocks out of $100$ blocks correspond to $200$ nonzero elements out of $298$ elements. Bottom: a sparse vector is not necessarily block-sparse. In this example, all $100$ blocks are nonzero each having one nonzero element. However, this gives rise to only 50 nonzero elements out of 5,000 elements.}}
\label{fig:blk}
\end{figure}
%


The problem of finding a representation of a signal $\y$ that uses the minimum number of blocks of $\B$ can be cast as the following optimization program
\begin{equation}
\label{eq:L2L0-A}
P_{\ell_q/\ell_0}: \; \min \sum_{i=1}^{n}{I(\| \c[i] \|_q)} \quad \st \quad \y = \B \c,
\end{equation}
where $q \geq 0$ and $I(\cdot)$ is the indicator function, which is zero when its argument is zero and is one otherwise. In fact, the objective function in \eqref{eq:L2L0-A} counts the number of nonzero blocks of a solution. However, solving  \eqref{eq:L2L0-A} is an NP-hard problem as it requires searching exhaustively over all choices of a few blocks of $\B$ and checking whether they span the observed signal. The $\ell_1$ relaxation of $P_{\ell_q/\ell_0}$ has the following form
%
%
\begin{equation}
\label{eq:L2L1-A}
P_{\ell_q/\ell_1}: \; \min \sum_{i=1}^{n}{\| \c[i] \|_q} \quad \st \quad \y = \B \c.
\end{equation}
For $q \geq 1$, the optimization program $P_{\ell_q/\ell_1}$ is convex and can be solved efficiently using convex programming tools \cite{BoydVandenberghe04}. 
\vspace{1mm}
\begin{remark}
For $q=1$, the convex program $P_{\ell_1/\ell_1}$ is the same as $P_{\ell_1}$ in \eqref{eq:PL1} used for sparse recovery. In other words, while the $\ell_1$ optimization program, under some conditions, can recover a sparse representation of a signal, it can also recover a block-sparse representation, under appropriate conditions, as we will discuss in this paper. 
\end{remark}
\vspace{1mm}

The works of \cite{Stojnic:TSP09, Eldar:TSP10, Eldar:TIT09} study conditions under which for the special case of $q=2$, $P_{\ell_2/\ell_1}$ and $P_{\ell_2/\ell_0}$ are equivalent.
These conditions are based on generalizations of mutual coherence and restricted isometry constant, as described next.

\myparagraph{Block-Coherence} The work of \cite{Eldar:TSP10} assumes that the blocks have linearly independent columns and are of the same length $d$,
\ie, $\rank(\B[i]) = m_i = d$. Under these assumptions, \cite{Eldar:TSP10} defines the block-coherence of a dictionary $\B$ as 
\begin{equation}
\label{eq:block-coh}
\mu_{B} \triangleq \max_{i \neq j} \frac{1}{d} \sigma_1(\B[i]^{\top}\B[j]),
\end{equation}
where $\sigma_1(\cdot)$ denotes the largest singular value of the given matrix. Also, the subcoherence of $\B$ is defined as $\nu \triangleq \max_{i}{\mu_i}$ where $\mu_i$ denotes the mutual coherence for the $i$-th block. \cite{Eldar:TSP10} shows that if 
\begin{equation}
\label{eq:block-coh-cond}
(2k-1) d \mu_{B} < 1 - (d-1) \nu,
\end{equation}
then $P_{\ell_2/\ell_1}$ and $P_{\ell_2/\ell_0}$ are equivalent and recover the $k$-block-sparse representation of a given signal. 

\myparagraph{Block-RIP}  \cite{Eldar:TIT09} assumes that the blocks have linearly independent columns, although their lengths need not be equal. Under this assumption, \cite{Eldar:TIT09} defines the block restricted isometry constant of $\B$ as the smallest constant $\delta_{B,k}$ such that
\begin{equation}
\label{eq:block-RIP}
(1 - \delta_{B,k}) \| \c \|_2^2 \leq \| \B \c \|_2^2 \leq (1 + \delta_{B,k}) \| \c \|_2^2
\end{equation}
holds for every $k$-block-sparse vector $\c$. Analogous to the conventional sparse recovery results, \cite{Eldar:TIT09} shows that if $\delta_{B,2k} < \sqrt{2} - 1$, then $P_{\ell_2/\ell_1}$ and $P_{\ell_2/\ell_0}$ are equivalent. 

The work of \cite{Stojnic:TSP09} proposes an alternative analysis framework for block-sparse recovery using $P_{\ell_2/\ell_1}$ in the special case of Gaussian dictionaries. By analyzing the nullspace of the dictionary, it shows that if the blocks have linearly independent columns, perfect recovery is achieved with high probability as the length of the signal, $D$, grows to infinity.

An alternative approach to recover the block-sparse representation of a given signal is to solve the optimization program 
\begin{equation}
\label{eq:L2L0-B}
P'_{\ell_q/\ell_0}\!\!: \, \min \sum_{i=1}^{n}{I(\| \B[i] \c[i] \|_q)} \quad \st \quad \y = \B \c,\!
\end{equation}
for $q \geq 0$. Notice that the solution to this problem coincides with that of $P_{\ell_q/\ell_0}$ for blocks with linearly independent columns since $\| \B[i] \c[i] \|_q > 0$ if and only if $\| \c[i] \|_q > 0$. Nevertheless, $P'_{\ell_q/\ell_0}$ is an NP-hard problem. In the case of $q \geq 1$, the following $\ell_1$ relaxation 
\begin{equation}
\label{eq:L2L1-B}
P'_{\ell_q/\ell_1}: \; \min \sum_{i=1}^{n}{\| \B[i] \c[i] \|_q} \quad \st \quad \y = \B \c,
\end{equation}
is a convex program and can be solved efficiently. The work of  \cite{Ganesh:ICASSP09} studies conditions under which, for the special case of $q=2$, $P'_{\ell_2/\ell_1}$ and $P'_{\ell_2/\ell_0}$ are equivalent. The conditions are based on the notion of mutual subspace incoherence, as described next.

\myparagraph{Mutual Subspace Incoherence} The work of  \cite{Ganesh:ICASSP09} introduces the notion of mutual subspace incoherence of $\B$, which is defined as
\begin{equation}
\mu_S = \max_{i \neq j} \max_{\x \in \mathcal{S}_i, \z \in \mathcal{S}_j} \frac{| \x^{\top} \z |}{ \| \x \|_2  \| \z \|_2 },
\end{equation}
%
where $\S_i = \spn(\B[i])$. Under the assumption that the blocks have  linearly independent columns and the subspaces spanned by each block are disjoint, \cite{Ganesh:ICASSP09} shows that $P'_{\ell_2/\ell_1}$  and $P'_{\ell_2/\ell_0}$ are equivalent if
\begin{equation}
(2k-1) \mu_S < 1.
\end{equation}
%

%

\smallskip
As mentioned above, the state-of-the-art block-sparse recovery methods \cite{Stojnic:TSP09, Eldar:TSP10, Eldar:TIT09, Ganesh:ICASSP09, Boufounos:TIT11} consider dictionaries whose blocks consist of linearly independent vectors which we refer to as \emph{non-redundant blocks}. However, in signal/image processing, machine learning, and computer vision problems such as face recognition \cite{Wright:PAMI09, Elhamifar:CVPR11} and motion segmentation \cite{Elhamifar:CVPR09, Rao:CVPR08}, blocks of a dictionary consist of data points and often the number of data in each block exceeds the dimension of the underlying subspace. 
For example, in automatic face recognition, the number of training images in each block of the dictionary is often more than the dimension of the face subspace, known to be $9$ under a fixed pose and varying illumination \cite{Basri:PAMI03}. One motivation for this is the fact that having more data in each block better captures the underlying distribution of the data in each subspace and, as expected, increases the performance of tasks such as classification. However, to the best of our knowledge, existing theoretical results have not addressed recovery in dictionaries whose blocks have linearly dependent atoms, which we refer to as \emph{redundant blocks}. 
Moreover, theoretical analysis for the equivalence between $P_{\ell_q/\ell_1}$ and $P_{\ell_q/\ell_0}$ as well as the equivalence between $P'_{\ell_q/\ell_1}$ and $P'_{\ell_q/\ell_0}$ has been restricted to only $q=2$. Nevertheless, empirical studies in some applications \cite{Zhao:AnnalsStat09}, have shown better block-sparse recovery performance for $q \neq 2$. Therefore, there is a need for analyzing the performance of each class of the convex programs for arbitrary $q \geq 1$.

\subsection{Paper Contributions}
In this paper, we consider the problem of block-sparse recovery using the two classes of convex programs $P_{\ell_q/\ell_1}$ and $P'_{\ell_q/\ell_1}$ for $q \geq 1$. Unlike the state of the art, we do not restrict the blocks of a dictionary to have linearly independent columns. Instead, we allow for both non-redundant and redundant blocks. In addition, we do not impose any restriction on the lengths of the blocks, such as requiring them to have the same length, and allow arbitrary and different lengths for the blocks. To characterize the relation between blocks of a dictionary, we introduce the notions of mutual and cumulative subspace coherence, which can be thought of as natural extensions of mutual and cumulative coherence from one-dimensional to multi-dimensional subspaces. 
Based on these notions, we derive conditions under which the convex programs $P_{\ell_q/\ell_1}$ and $P'_{\ell_q/\ell_1}$ are equivalent to $P_{\ell_q/\ell_0}$ and $P'_{\ell_q/\ell_0}$, respectively. While the mutual subspace coherence is easier to compute, cumulative subspace coherence provides weaker conditions for block-sparse recovery using either of the convex programs. Thanks to our analysis framework and the introduced notions of subspace coherence, our block-sparse recovery conditions are weaker than the conditions of the state of the art who have studied the special case of $q=2$. To the best of our knowledge, our work is the first one to analyze both non-redundant and redundant blocks, while our theoretical framework does not separate the two cases and analyzes both within a unified framework.

We evaluate the performance of the proposed convex programs on synthetic data and in the problem of face recognition. Our results show that treating the face recognition as a block-sparse recovery problem can significantly improve the recognition performance. In fact, we show that the convex program $P'_{\ell_q/\ell_1}$ outperforms the state-of-the-art face recognition methods on a real-world dataset.

\myparagraph{Paper Organization} The paper is organized as follows. In Section \ref{sec:problemsetting}, we introduce some notations and notions that characterize the relation between blocks and the relation among atoms within each block of a dictionary. 
In Section \ref{sec:uniqueness}, we investigate conditions under which we can uniquely determine a block-sparse representation of a signal. In Sections \ref{sec:L2L1} and \ref{sec:L2L1P}, we consider the convex programs $P_{\ell_q/\ell_1}$ and $P'_{\ell_q/\ell_1}$, respectively, and study conditions under which they recover a block-sparse representation of a signal in the case of both non-redundant and redundant blocks. 
In Section \ref{sec:discussion}, we discuss the connection between our results and the problem of correcting sparse outlying entries in the observed signal. In Section \ref{sec:experiment}, we evaluate the performance of the two classes of convex programs through a number of synthetic experiments as well as the real problem of face recognition. Finally, Section \ref{sec:conc} concludes the paper.

\section{Problem Setting}
\label{sec:problemsetting}
We consider the problem of block-sparse recovery in a union of subspaces. We assume that the dictionary $\B$ consists of $n$ blocks and the vectors in each block $\B[i] \in \Re^{D \times m_i}$ live in a linear subspace $\mathcal{S}_i$ of dimension $d_i$. Unlike the state-of-the-art block-sparse recovery literature, we do not restrict the blocks to have linearly independent columns. Instead, we allow for both non-redundant ($m_i = d_i$) and redundant ($m_i > d_i$) blocks. 
For reasons that will become clear in the subsequent sections, throughout the paper, we assume that the subspaces $\{\mathcal{S}_i\}_{i=1}^n$ spanned by the columns of the blocks $\{ \B[i] \}_{i=1}^n$ are disjoint.
\vspace{1mm}
\begin{definition}
\label{def:disjoint}
A collection of subspaces $\{\mathcal{S}_i\}_{i=1}^n$ is called \emph{disjoint} if each pair of different subspaces intersect only at the origin.
\end{definition}
\vspace{1mm}

In order to characterize a dictionary $\B$, we introduce two notions that characterize the relationship between the blocks and among the atoms of each block of the dictionary. We start by introducing notions that capture the inter-block relationships of a dictionary. To do so, we make use of the subspaces associated with the blocks. 
\vspace{1mm}
\begin{definition}
\label{def:subspaceangle}
The \emph{subspace coherence} between two disjoint subspaces $\mathcal{S}_i$ and $\mathcal{S}_j$ is defined as
\begin{equation}
\label{eq:sub-coh}
\mu(\mathcal{S}_i,\mathcal{S}_j) = \max_{\x \in \mathcal{S}_i, \z \in \mathcal{S}_j} \frac{| \x^{\top} \z |}{ \| \x \|_2  \| \z \|_2 }  \in [0,1).
\end{equation}
The \emph{mutual subspace coherence} \cite{Ganesh:ICASSP09}, $\mu_S$, is defined as the largest subspace coherence among all pairs of subspaces,
\begin{equation}
\label{eq:mut-sub-coh}
\mu_S \triangleq \max_{i \neq j} \mu(\mathcal{S}_i,\mathcal{S}_j).
\end{equation}
\end{definition}
\vspace{1mm}
%
%
Notice from Definition \ref{def:disjoint} that two disjoint subspaces intersect only at the origin. Therefore, their subspace coherence is always smaller than one.\footnote{Note that the smallest principal angle \cite{Golub:MatrixComps96} between $\mathcal{S}_i$ and $\mathcal{S}_j$, $\theta(\mathcal{S}_i,\mathcal{S}_j)$, is related to the subspace coherence by $\mu(\mathcal{S}_i,\mathcal{S}_j) = \cos(\theta(\mathcal{S}_i,\mathcal{S}_j))$. Thus, $\mu_S$ is the cosine of the smallest principal angle among all pairs of different subspaces.} 
%
The following result shows how to compute the subspace coherence efficiently from the singular values of a matrix obtained from the subspace bases \cite{Golub:MatrixComps96}.
\vspace{1mm}
\begin{proposition}
\label{prop:coherence}
Let $\mathcal{S}_i$ and $\mathcal{S}_j$ be two disjoint subspaces with \emph{orthonormal bases} $\A_i$ and $\A_j$, respectively. The subspace coherence $\mu(\mathcal{S}_i,\mathcal{S}_j)$ is given by
\begin{equation} 
\mu(\mathcal{S}_i,\mathcal{S}_j) = \sigma_1(\A_i^{\top} \A_j).
\end{equation}
\end{proposition}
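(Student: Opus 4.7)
The plan is to parametrize arbitrary vectors $\x \in \mathcal{S}_i$ and $\z \in \mathcal{S}_j$ in terms of their coordinates in the orthonormal bases $\A_i$ and $\A_j$, reducing the supremum in Definition \ref{def:subspaceangle} to a standard variational characterization of the largest singular value.

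Concretely, I would first write any $\x \in \mathcal{S}_i$ as $\x = \A_i \a$ for some coordinate vector $\a$, and likewise $\z = \A_j \b$. Because $\A_i$ and $\A_j$ have orthonormal columns, $\A_i^\top \A_i = \I$ and $\A_j^\top \A_j = \I$, so $\|\x\|_2 = \|\a\|_2$ and $\|\z\|_2 = \|\b\|_2$. Substituting into \eqref{eq:sub-coh} transforms the ratio into
\begin{equation}
\frac{|\x^{\top}\z|}{\|\x\|_2 \|\z\|_2} = \frac{|\a^{\top} (\A_i^{\top} \A_j) \b|}{\|\a\|_2 \|\b\|_2},
\end{equation}
and the supremum over nonzero $\x \in \mathcal{S}_i$, $\z \in \mathcal{S}_j$ becomes a supremum over nonzero coordinate vectors $\a, \b$.

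Next, I would invoke the standard fact that for any matrix $\M$, $\sigma_1(\M) = \sup_{\a,\b \neq \0} |\a^\top \M \b|/(\|\a\|_2 \|\b\|_2)$ (this is immediate from the SVD $\M = \U \Sigma \V^\top$ and the Cauchy--Schwarz inequality, attained by taking $\a$ and $\b$ to be the leading left/right singular vectors). Applying this with $\M = \A_i^\top \A_j$ yields the claimed identity $\mu(\mathcal{S}_i,\mathcal{S}_j) = \sigma_1(\A_i^\top \A_j)$.

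There is no real obstacle here: the argument is a direct change of variables followed by a textbook singular-value characterization. The only thing to be slightly careful about is that the parametrizations $\x = \A_i \a$ and $\z = \A_j \b$ are bijections between $\mathcal{S}_i, \mathcal{S}_j$ and their coordinate spaces, which is guaranteed by the orthonormality (hence linear independence) of the columns of $\A_i$ and $\A_j$, so no spurious or missed vectors enter the supremum.
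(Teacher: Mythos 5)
Your proof is correct: the change of variables $\x = \A_i\a$, $\z = \A_j\b$ using the orthonormality of the bases, followed by the variational characterization of the largest singular value, is exactly the standard argument for this fact. The paper does not actually include a proof of this proposition --- it cites it directly from Golub and Van Loan --- and your argument is the same one found there, so there is nothing to compare beyond noting that you have filled in the omitted (routine) details correctly.
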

\vspace{1mm}
It follows from Definition \ref{def:subspaceangle} that the mutual subspace coherence can be computed as
\begin{equation}
\label{eq:mut-coh-svd}
\mu_S = \max_{i \neq j} \sigma_1(\A_i^{\top} \A_j).
\end{equation}
Comparing this with the notion of block-coherence in \eqref{eq:block-coh}, the main difference is that block-coherence, $\mu_B$, uses directly block matrices which are assumed to be non-redundant. However, mutual subspace coherence, $\mu_S$, uses orthonormal bases of the blocks that can be either non-redundant or redundant. The two notions coincide with each other when the blocks are non-redundant and consist of orthonormal vectors. 

While the mutual subspace coherence can be easily computed, it has the shortcoming of not characterizing very well the collection of subspaces because it only reflects the most extreme correlations between subspaces. Thus, we define a notion that better characterizes the relationship between the blocks of a dictionary.
%
%
%
\vspace{1mm}
\begin{definition}
\label{def:cumsubang}
Let $\Lambda_k$ denote a subset of $k$ different elements from $\{1,\ldots, n\}$. The \emph{$k$-cumulative subspace coherence} is defined as
\begin{equation}
\label{eq:cumsubang}
\zeta_k \triangleq \max_{\Lambda_k} \max_{i \notin \Lambda_k} {\sum_{j \in \Lambda_k} \mu(\mathcal{S}_i,\mathcal{S}_j )}.
\end{equation}
\end{definition}
\vspace{1mm}
Roughly speaking, the $k$-cumulative subspace coherence measures the maximum total subspace coherence between a fixed subspace and a collection of $k$ other subspaces. Note that for $k=1$, we have $\zeta_1 = \mu_S$.

%
Mutual/cumulative subspace coherence can be thought of as natural extensions of mutual/cumulative coherence, defined in \eqref{eq:mutcoh} and \eqref{eq:cumcoh}. 
In fact, they are equivalent to each other for the case of one-dimensional subspaces, where each block of the dictionary consists of a single atom. 
%
The following Lemma shows the relationship between mutual and cumulative subspace coherence of a dictionary.
\vspace{1mm}
\begin{lemma} 
\label{lem:cum-mut}
Consider a dictionary $\B$, which consists of $n$ blocks. For every $k \leq n$, we have
\begin{equation}
\zeta_k \leq k \mu_S.
\end{equation}
\end{lemma}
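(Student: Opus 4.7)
The plan is to prove this directly from the definitions, since this is essentially a statement about bounding a sum of $k$ pairwise subspace coherences by $k$ times the maximum pairwise subspace coherence. No deep machinery is needed; the whole argument is a one-line inequality followed by taking maxima.

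Concretely, I would start from Definition \ref{def:cumsubang} of the $k$-cumulative subspace coherence. Fix an arbitrary index set $\Lambda_k \subset \{1, \ldots, n\}$ of cardinality $k$ and an arbitrary index $i \notin \Lambda_k$. For every $j \in \Lambda_k$, the pair $(i,j)$ satisfies $i \neq j$, so Definition \ref{def:subspaceangle} gives the pointwise bound $\mu(\mathcal{S}_i, \mathcal{S}_j) \leq \mu_S$. Summing this inequality over the $k$ indices in $\Lambda_k$ yields
\begin{equation*}
\sum_{j \in \Lambda_k} \mu(\mathcal{S}_i, \mathcal{S}_j) \leq \sum_{j \in \Lambda_k} \mu_S = k \, \mu_S.
\end{equation*}

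Next, since the right-hand side does not depend on $\Lambda_k$ or on $i$, I would take the maximum of the left-hand side over all admissible choices of $\Lambda_k$ and $i \notin \Lambda_k$. By the definition of $\zeta_k$ in \eqref{eq:cumsubang}, this maximum is exactly $\zeta_k$, which gives the desired inequality $\zeta_k \leq k \mu_S$.

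There is no real obstacle here: the lemma is a book-keeping consequence of the definitions and the fact that $\mu_S$ is by construction an upper bound on every individual subspace coherence $\mu(\mathcal{S}_i, \mathcal{S}_j)$. The only thing worth mentioning is that the constraint $i \notin \Lambda_k$ in the definition of $\zeta_k$ ensures $i \neq j$ for every $j$ appearing in the sum, which is precisely what is needed to invoke the bound $\mu(\mathcal{S}_i, \mathcal{S}_j) \leq \mu_S$ (an inequality stated over distinct pairs).
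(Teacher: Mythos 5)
Your proof is correct and follows essentially the same argument as the paper: bound each summand $\mu(\mathcal{S}_i,\mathcal{S}_j)$ by $\mu_S$ and take maxima. The only difference is that the paper routes the bound through the intermediate quantity $u_k$ (the sum of the $k$ largest pairwise subspace coherences), obtaining $\zeta_k \leq u_k \leq k\mu_S$, which also justifies the sharper condition mentioned in its footnotes; your direct version omits this but loses nothing for the lemma as stated.
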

\vspace{1mm}
The proof of Lemma \ref{lem:cum-mut} is straightforward and is provided in the Appendix. 
While computing $\zeta_k$ is, in general, more costly than computing $\mu_S$, it follows from Lemma \ref{lem:cum-mut} that conditions for block-sparse recovery based on $\zeta_k$ are weaker than those based on $\mu_S$, as we will show in the next sections. In fact, for a dictionary, $\zeta_k$ can be much smaller than $k \mu_S$, which results in weaker block-sparse recovery conditions based on $\zeta_k$. To see this, consider the four one-dimensional subspaces shown in Figure \ref{fig:4-subspaces}, where $\mathcal{S}_1$ and $\mathcal{S}_2$ are orthogonal to $\mathcal{S}_3$ and $\mathcal{S}_4$, respectively. Also, the principal angles between $\mathcal{S}_1$ and $\mathcal{S}_2$ as well as $\mathcal{S}_3$ and $\mathcal{S}_4$ are equal to $\theta < \pi / 4$. 
Hence, the ordered subspace coherences are 
$0 \leq 0 \leq \sin(\theta) \leq \sin(\theta) \leq \cos(\theta) \leq \cos(\theta)$. One can verify that
\begin{equation*}
\zeta_3 = \cos(\theta)  +  \sin(\theta) <  3 \mu_S = 3\cos(\theta).
\end{equation*}
In fact, for small values of $\theta$, $\zeta_3$ is much smaller than $3 \mu_S$.{\footnote{Another notion, which can be computed efficiently, is the sum of the $k$ largest subspace coherences, $u_k \triangleq \mu_1 + \cdots + \mu_k$, where the sorted subspace coherences among all pairs of different subspaces are denoted by $\mu_S = \mu_{1} \geq \mu_{2} \geq \mu_{3} \geq \cdots.$ We can show that $\zeta_k \leq u_k \leq k \mu_S$. In the example of Figure \ref{fig:4-subspaces}, $u_3 = 2 \cos(\theta) + \sin(\theta)$, which is between $\zeta_3$ and $3 \mu_S$.}
\begin{figure}
\centering 
\includegraphics[scale = 0.6, trim = 14 13 4 3 , clip]{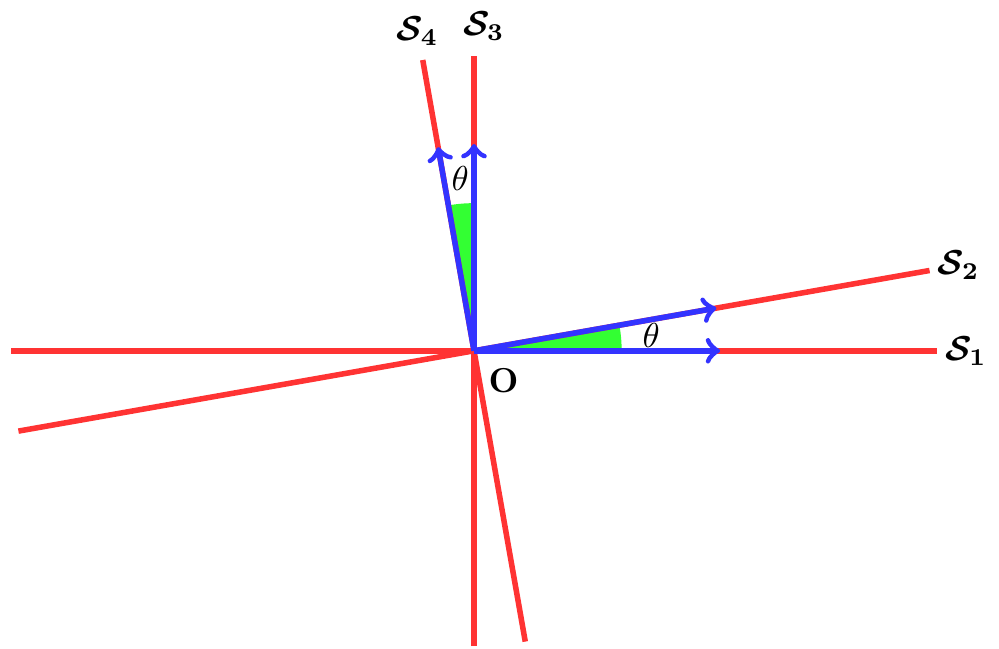}
\vspace{-2mm}
\caption{\footnotesize{Four one-dimensional subspaces in a two-dimensional space. $\mathcal{S}_1$ and $\mathcal{S}_2$ are orthogonal to $\mathcal{S}_3$ and $\mathcal{S}_4$, respectively.}}
\label{fig:4-subspaces}
\end{figure}

Next, we introduce notions that capture the intra-block characteristics of a dictionary.
\vspace{1mm}
\begin{definition}
\label{def:genRIP}
Let $q > 0$. For a dictionary $\B$, define the \emph{intra-block $q$-restricted isometry constant}, $\epsilon_{q}$, as the smallest constant such that for every $i$ there exists a full column-rank submatrix $\bar{\B}[i] \in \Re^{D \times d_i}$ of $\B[i] \in \Re^{D \times m_i}$ such that for every $\bar{\c}[i]$ we have
\begin{equation}
\label{eq:delta-i}
( 1 - \epsilon_{q} ) \| \bar{\c}[i] \|_q^2 \leq \| \bar{\B}[i] \bar{\c}[i] \|_2^2 \leq ( 1 + \epsilon_{q} ) \| \bar{\c}[i] \|_q^2.
\end{equation}
\end{definition}
\vspace{1mm}
%
%
Roughly speaking, $\epsilon_{q}$ characterizes the best $q$-restricted isometry property among all submatrices of $\B[i]$ that span subspace $\S_i$. When $q = 2$, for a dictionary with non-redundant blocks, where $\bar{\B}[i] = \B[i]$, $\epsilon_2$ coincides with the $1$-block restricted isometry constant of $\B$ defined in \eqref{eq:block-RIP}, \ie, $\epsilon_2 = \delta_{B,1}$. Thus, $\epsilon_q$ can be thought of as a generalization of the $1$-block restricted isometry constant, $\delta_{B,1}$, to generic dictionaries with both non-redundant and redundant blocks and arbitrary $q \geq 1$.

\vspace{1mm}
\begin{definition}
\label{def:genRIP2}
Let $q > 0$. For a dictionary $\B$, define the \emph{upper intra-block $q$-restricted isometry constant}, $\sigma_q$, as the smallest constant such that for every $i$ and $\c[i]$ we have
\begin{equation}
\label{eq:Delta}
\| \B[i] \c[i] \|_2^2 \leq (1+\sigma_q) \| \c[i] \|_q^2.
\end{equation}
\end{definition}
\vspace{1mm}
%
%
While in general $\epsilon_q \leq \sigma_q$, for the special case of non-redundant blocks, where $\bar{\B}[i] = \B[i]$, we have $\epsilon_q = \sigma_q$. 
\vspace{1mm}
\begin{remark}
It is important to note that the theory developed in this paper holds for any $q > 0$. However, as we will see, $q \geq 1$ leads to convex programs that can be solved efficiently. Hence, we focus our attention to this case.
\end{remark}

\section{Uniqueness of Block-Sparse Representation}
\label{sec:uniqueness} 
Consider a dictionary $\B$ with $n$ blocks $\B[i] \in \Re^{D \times m_i}$ generated by disjoint subspaces $\S_i$ of dimensions $d_i$. Let $\y$ be a signal that has a block-sparse representation in $\B$ using $k$ blocks indexed by $\{i_1, \ldots, i_k\}$. We can write
\begin{equation}
\y = \sum_{l=1}^{k}{\B[i_l] \c[i_l]} = \sum_{l=1}^{k}{\s_{i_l}},
\end{equation}
where $\s_{i_l} \triangleq \B[i_l] \c[i_l]$ is a vector in the subspace $\S_{i_l}$. In this section, we investigate conditions under which we can uniquely recover the indices $\{i_l\}$ of the blocks/subspaces as well as the vectors $\{ \s_{i_l} \in \S_{i_l} \}$ that generate a block-sparse representation of a given $\y$. We will investigate the efficient recovery of such a block-sparse representation using the convex programs $P_{\ell_q/\ell_1}$ and $P'_{\ell_q/\ell_1}$ in the subsequent sections.

In general, uniqueness of $\{\s_{i_l}\}$ is a weaker notion than the uniqueness of $\{\c[i_l]\}$ since a unique set of coefficient blocks $\{ \c[i_l] \}$ uniquely determines the vectors $\{ \s_{i_l} \}$, but the converse is not necessarily true. More precisely, given $\s_{i_l}$, the equation $\s_{i_l} = \B[i_l] \c[i_l]$ does not have a unique solution $\c[i_l]$ when $\B[i_l]$ is redundant. The solution is unique, only when the block is non-redundant. Therefore, the uniqueness conditions we present next, are more general than the state-of-the-art results. While \cite{Eldar:TIT09} and \cite{Eldar:TSP10} provide conditions for the uniqueness of the blocks $\{ i_l \}$ and the coefficient blocks $\{ \c[i_l] \}$, which only hold for non-redundant blocks, we provide conditions for the uniqueness of the blocks $\{ i_l \}$ and the vectors $\{ \s_{i_l} \}$ for generic dictionaries with non-redundant or redundant blocks. We show the following result whose proof is provided in the Appendix.
\vspace{1mm}
\begin{proposition}
\label{prop:uniqueness1}
Let $\bar{\B}[i] \in \Re^{D \times d_i}$ be an arbitrary full column-rank submatrix of $\B[i] \in \Re^{D \times m_i}$ and define
\begin{equation}
\label{eq:Bbar}
\bar{\B} \triangleq \begin{bmatrix} \bar{\B}[1] & \cdots & \bar{\B}[n] \end{bmatrix}.
\end{equation}
%
The blocks $\{ i_l \}$ and the vectors $\{ \s_{i_l} \}$ that generate a $k$-block-sparse representation of a signal can be determined uniquely if and only if $\bar{\B} \, \bar{\c} \neq 0$ for every $2k$-block-sparse vector $\bar{\c} \neq 0$. 
\end{proposition}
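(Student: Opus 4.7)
My plan is to prove Proposition \ref{prop:uniqueness1} by the standard ``two representations produce a nullspace element'' argument, appropriately adapted to block-sparsity and, crucially, to the fact that we only care about uniqueness of the vectors $\{\s_{i_l}\}$ rather than the coefficient vectors $\{\c[i_l]\}$. The reduction from $\s_{i_l}$-uniqueness to $\bar{\c}$-uniqueness is what makes the full column rank choice of $\bar{\B}[i]$ essential: since $\bar{\B}[i]$ has $d_i = \dim \S_i$ linearly independent columns that span $\S_i$, every $\s_{i_l} \in \S_{i_l}$ is represented by one and only one coefficient block $\bar{\c}[i_l]$. This lets me translate statements about the $\s$'s into equivalent statements about a unique $\bar{\c}$.

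For the $(\Leftarrow)$ direction I will assume $\bar{\B}\bar{\c}\neq 0$ for every nonzero $2k$-block-sparse $\bar{\c}$, and suppose $\y$ admits two $k$-block-sparse representations $\y=\sum_{l\in\Lambda_1}\s_l^{(1)}=\sum_{l\in\Lambda_2}\s_l^{(2)}$ with $\s_l^{(j)}\in\S_l$. Using injectivity of $\bar{\B}[l]$ on $\Re^{d_l}$, I lift each representation to a unique coefficient vector $\bar{\c}^{(j)}$ supported on $\Lambda_j$, each $k$-block-sparse. Then $\bar{\c}^{(1)}-\bar{\c}^{(2)}$ is $2k$-block-sparse and lies in the nullspace of $\bar{\B}$, so it must be zero. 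This forces the minimal supports to coincide (hence $\{i_l\}$ is unique) and $\s_l^{(1)}=\bar{\B}[l]\bar{\c}^{(1)}[l]=\bar{\B}[l]\bar{\c}^{(2)}[l]=\s_l^{(2)}$ for every $l$.

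For the $(\Rightarrow)$ direction I argue by contrapositive: given a nonzero $2k$-block-sparse $\bar{\c}$ with $\bar{\B}\bar{\c}=0$, let $T=\{i:\bar{\c}[i]\neq 0\}$, so $|T|\leq 2k$. Since each $\bar{\B}[i]$ has full column rank, no single-block $\bar{\c}$ can be in the nullspace, so $|T|\geq 2$. I then split $T$ into two nonempty disjoint subsets $T_1,T_2$ with $|T_j|\leq k$, and set $\bar{\c}_1=\bar{\c}|_{T_1}$, $\bar{\c}_2=-\bar{\c}|_{T_2}$. Both are nonzero $k$-block-sparse vectors with disjoint block supports, and $\bar{\B}\bar{\c}_1=\bar{\B}\bar{\c}_2=:\y$. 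Defining $\s_l^{(j)}=\bar{\B}[l]\bar{\c}_j[l]$ produces two genuinely distinct $k$-block-sparse representations of the same $\y$ (they are supported on disjoint block sets $T_1\neq T_2$), contradicting uniqueness.

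The main subtlety, rather than an obstacle, is being careful about what ``uniqueness of the $\s_{i_l}$'' actually means when blocks are redundant: uniqueness of the vectors $\s_{i_l}\in\S_{i_l}$ does not require uniqueness of $\c[i_l]\in\Re^{m_i}$, which is why the nullspace condition is stated on $\bar{\B}$ (the nonredundant reduction) and not on $\B$ itself. Once this point is clear, the rest is a mechanical adaptation of the classical sparse-recovery uniqueness argument to the block setting. I also want to double-check that the split in the $(\Rightarrow)$ step indeed always succeeds, i.e., that $|T|\geq 2$ whenever $\bar{\c}\neq 0$ and $\bar{\B}\bar{\c}=0$; this is where the full-column-rank choice of $\bar{\B}[i]$ is used again, to rule out the degenerate case $|T|=1$.
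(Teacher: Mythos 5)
Your proof is correct and follows essentially the same route as the paper's: lifting the vectors $\s_{i_l}$ to unique coefficient blocks via the full-column-rank submatrices $\bar{\B}[i]$ and running the standard nullspace argument in both directions. The only difference is cosmetic --- you are somewhat more careful than the paper about degenerate cases (ruling out $|T|=1$ and verifying that the two constructed representations use disjoint, nonempty block supports), which strengthens rather than changes the argument.
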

\vspace{1mm}
\begin{remark}
Note that the disjointness of subspaces is a necessary condition for uniquely recovering the blocks that take part in a block-sparse representation of a signal. This comes from the fact that for $k=1$, the uniqueness condition of Proposition \ref{prop:uniqueness1} requires that any two subspaces intersect only at the origin. 
\end{remark}
\vspace{1mm}

Next, we state another uniqueness result 
that we will use in our theoretical analysis in the next sections. 
For a fixed $\tau \in [0,1)$ and for each $i \in \{1,\ldots, n\}$ define
\begin{equation}
\label{eq:Wi}
\mathbb{W}_{\tau, i} \triangleq \{ \s_i \in \mathcal{S}_i,\, 1-\tau \leq \| \s_{i} \|_2^2 \leq 1+\tau \},
\end{equation}
which is the set of all vectors in $\mathcal{S}_i$ whose norm is bounded by $1+\tau$ from above and by $1-\tau$ from below.
Let $\Lambda_k = \{ i_1, \ldots, i_k \}$ be a set of $k$ indices from $\{1, \ldots, n \}$. Define
%
\begin{equation}
\label{eq:mathbbB}
\mathbb{B}_{\tau}(\Lambda_k) \! \triangleq \! \{ \B_{\Lambda_k} \!=\! \begin{bmatrix} \s_{i_1} \!\!\!\!\!&\cdots  \!\!\!\!\!& \s_{i_k} \end{bmatrix}\!, \s_{i_l} \! \in \! \mathbb{W}_{\tau, i_l}, 1 \leq l \leq k \},
\end{equation}
which is the set of matrices $\B_{\Lambda_k} \in \Re^{D \times k}$ whose columns are drawn from subspaces indexed by $\Lambda_k$ and their norms are bounded according to \eqref{eq:Wi}. With abuse of notation, we use $\B_k$ to indicate $\B_{\Lambda_k} \in \mathbb{B}_{\tau}(\Lambda_k)$ whenever $\Lambda_k$ is clear from the context. For example, $\B_n \in \Re^{D \times n}$ indicates a matrix whose columns are drawn from all $n$ subspaces.
%
%
We have the following result whose proof is provided in the Appendix. 
\vspace{1mm}
\begin{corollary}
\label{cor:uniqueness1}
Let $\tau \in [0,1)$. 
The blocks $\{ i_l \}$ and the vectors $\{ \s_{i_l} \}$ that constitute a $k$-block-sparse representation of a signal can be determined uniquely if and only if $\rank(\B_{n}) \geq 2k$ for every $\B_{n} \in \mathbb{B}_{\tau}(\Lambda_n)$.
\end{corollary}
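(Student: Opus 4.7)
The plan is to derive Corollary 1 from Proposition 1 by reformulating the algebraic injectivity condition on $\bar{\B}$ as a geometric condition on ranks of matrices with columns drawn from the subspaces. The bridge is that each $\bar{\B}[i] \in \Re^{D \times d_i}$ is full column rank with range $\mathcal{S}_i$, so the map $\bar{\c}[i] \mapsto \bar{\B}[i]\bar{\c}[i]$ is a bijection between $\Re^{d_i}$ and $\mathcal{S}_i$. Under this identification, a nonzero $2k$-block-sparse $\bar{\c}$ with $\bar{\B}\bar{\c} = 0$ corresponds to vectors $\s_j \in \mathcal{S}_j \setminus \{0\}$ indexed by some support $J \subseteq \{1,\ldots,n\}$ with $|J| \leq 2k$ satisfying $\sum_{j \in J} \s_j = 0$.

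For the direction from Proposition 1's hypothesis to the rank statement, I would fix any $\B_n \in \mathbb{B}_\tau(\Lambda_n)$ and consider any $2k$ of its columns, which by construction come from $2k$ distinct subspaces. Any nontrivial linear dependence among them would, through the bijection above, produce a nonzero $2k$-block-sparse $\bar{\c}$ with $\bar{\B}\bar{\c} = 0$, contradicting the hypothesis; hence any such collection of $2k$ columns of $\B_n$ is linearly independent and $\rank(\B_n) \geq 2k$.

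For the reverse direction I would argue by contrapositive. Starting from a nonzero $2k$-block-sparse $\bar{\c}$ with $\bar{\B}\bar{\c} = 0$, I form the corresponding dependence $\sum_{j \in J} \s_j = 0$ with $|J| \leq 2k$ and each $\s_j \in \mathcal{S}_j \setminus \{0\}$, rescale each nonzero $\s_j$ by a positive factor so that it lands in $\mathbb{W}_{\tau,j}$ (possible because $\tau < 1$ makes each annulus nonempty) while preserving a nontrivial linear relation among the rescaled vectors, and pick arbitrary $\s_i \in \mathbb{W}_{\tau,i}$ for $i \notin J$, assembling a concrete $\B_n \in \mathbb{B}_\tau(\Lambda_n)$. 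The main obstacle I anticipate is closing the quantitative gap here: concluding $\rank(\B_n) < 2k$ strictly, rather than merely exhibiting a single linear dependence among the columns. Since the $n - |J|$ columns from subspaces outside $J$ could in principle raise the rank, this step must exploit the disjointness of $\{\mathcal{S}_i\}_{i=1}^n$ and the freedom in the rescaling factors, most naturally by reducing to a $2k$-column submatrix $\B_{\Lambda_{2k}}$ that is forced to be rank-deficient, which in turn contradicts the hypothesis $\rank(\B_n) \geq 2k$.
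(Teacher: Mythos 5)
Your reduction to Proposition \ref{prop:uniqueness1} and your argument for the first direction (any $2k$ columns of a $\B_n \in \mathbb{B}_{\tau}(\Lambda_n)$ come from $2k$ distinct subspaces, so a dependence among them would yield a nonzero $2k$-block-sparse null vector of $\bar{\B}$) are correct and match the paper. The obstacle you flag in the reverse direction, however, is a genuine gap, and the closure you sketch does not work: exhibiting a rank-deficient $2k$-column submatrix of $\B_n$ does \emph{not} contradict $\rank(\B_n) \geq 2k$, because the remaining $n-2k$ columns can supply the missing rank. Concretely, take $n=5$ one-dimensional subspaces $\mathcal{S}_1=\spn(\e_1)$, $\mathcal{S}_2=\spn(\e_2)$, $\mathcal{S}_3=\spn(\e_1+\e_2)$, $\mathcal{S}_4=\spn(\e_3)$, $\mathcal{S}_5=\spn(\e_4)$ in $\Re^4$, and $k=2$. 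Every $\B_5 \in \mathbb{B}_{\tau}(\Lambda_5)$ has rank exactly $4=2k$, yet $\y=\e_1+\e_2$ admits two distinct $2$-block-sparse representations (via blocks $\{1,2\}$ with vectors $\e_1,\e_2$, or via block $\{3\}$ with vector $\e_1+\e_2$), so uniqueness fails. Hence the implication ``$\rank(\B_n)\geq 2k$ for all $\B_n$ implies uniqueness'' is false as literally stated, and no amount of rescaling or careful completion to an $n$-column matrix can rescue it.

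What is provable --- and what is actually invoked downstream, e.g.\ to conclude that $\B_k$ is full column-rank in the proofs of Propositions \ref{prop:suff1} and \ref{prop:suff2} --- is the version with the rank condition imposed on every $2k$-column matrix: uniqueness holds if and only if $\rank(\B_{2k})=2k$ for every $\B_{2k}\in\mathbb{B}_{\tau}(\Lambda_{2k})$ and every index set $\Lambda_{2k}$. With that reformulation your argument closes cleanly: a nonzero $2k$-block-sparse $\bar{\c}$ in the nullspace of $\bar{\B}$ gives nonzero vectors $\s_j\in\mathcal{S}_j$ summing to zero over a support $J$ with $|J|\leq 2k$; rescaling each into $\mathbb{W}_{\tau,j}$ preserves the dimension of their span, and padding $J$ to size $2k$ with arbitrary vectors from other subspaces yields $2k$ linearly dependent columns, i.e.\ a $\B_{2k}$ of rank less than $2k$. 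You should know that the paper's own proof makes exactly the leap you were worried about (``$\B_n\c_n=0$ with $\c_n$ a $2k$-sparse vector implies $\rank(\B_n)<2k$''), so your instinct that this step requires justification was right; it simply cannot be justified for the $n$-column formulation of the statement.
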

\vspace{1mm}
Note that the result of Corollary \ref{cor:uniqueness1} still holds if 
we let the columns of $\B_n$ have arbitrary nonzero norms, because the rank of a matrix does not change by scaling its columns with nonzero constants. However, as we will show in the next section, the bounds on the norms as in \eqref{eq:Wi} appear when we analyze block-sparse recovery using the convex programs $P_{\ell_q/\ell_1}$ and $P'_{\ell_q/\ell_1}$. While checking the condition of Corollary \ref{cor:uniqueness1} is not possible, as it requires computing every possible $\s_i$ in $\mathbb{W}_{\tau, i}$, we use the result of Corollary \ref{cor:uniqueness1} in our theoretical analysis in the next sections.
In the remainder of the paper, we assume that a given signal $\y$ has a \emph{unique} $k$-block-sparse representation in $\B$. By {uniqueness of a block-sparse representation}, we mean that the blocks $\Lambda_k$ and the vectors $\{ \s_i \in \S_i \}_{i \in \Lambda_k}$ for which $\y = \sum_{i \in \Lambda_k}{\s_i}$ can be determined uniquely. Under this assumption, we investigate conditions under which the convex programs  $P_{\ell_q/\ell_1}$ and $P'_{\ell_q/\ell_1}$ recover the unique set of nonzero blocks $\Lambda_k$ and the unique vectors $\{ \s_i \in \S_i \}_{i \in \Lambda_k}$.

\section{Block-Sparse Recovery via $P_{\ell_q/\ell_1}$}
\label{sec:L2L1}
%
The problem of finding a block-sparse representation of a signal $\y$ in a dictionary $\B$ with non-redundant or redundant blocks can be cast as the optimization program
\begin{equation*}
\label{eq:optL2L0G}
P_{\ell_q/\ell_0}: \; \min \sum_{i=1}^{n} I(\| \c[i] \|_q) \quad \st \quad \y = \B \c.
\end{equation*}
Since $P_{\ell_q/\ell_0}$ directly penalizes the norm of the coefficient blocks, it always recovers a representation of a signal with the minimum number of nonzero blocks. 
As $P_{\ell_q/\ell_0}$ is NP-hard, for $q \geq 1$, we consider its convex relaxation
%
\begin{equation*}
\label{eq:optL2L1G}
P_{\ell_q/\ell_1}: \; \min \sum_{i=1}^{n} \| \c[i] \|_q \quad \st \quad \y = \B \c,
\end{equation*}
and, under a unified framework, study conditions under which $P_{\ell_q/\ell_1}$ and $P_{\ell_q/\ell_0}$ are equivalent for both the case of non-redundant and redundant blocks.

To that end, let $\Lambda_k$ be a set of $k$ indices from $\{1, \cdots, n\}$ and $\Lambda_{\widehat{k}}$ be the set of the remaining $n-k$ indices. Let $\x$ be a nonzero vector in the intersection of $\oplus_{i \in \Lambda_k}{\mathcal{S}_i}$ and $\oplus_{i \in \Lambda_{\widehat{k}}}{\mathcal{S}_i}$, where $\oplus$ denotes the direct sum operator. Let the minimum $\ell_q/\ell_1$-norm coefficient vector when we choose only the $k$ blocks of $\B$ indexed by $\Lambda_k$ be
\begin{equation}
\label{eq:L2L1red1}
\breve{\c}^* = \argmin \sum_{i \in \Lambda_k} \| \c[i] \|_q \quad \st \quad \x = \! \sum_{i \in \Lambda_k} \B[i] \c[i],
\end{equation}
and let the minimum $\ell_q/\ell_1$-norm coefficient vector when we choose the blocks indexed by $\Lambda_{\widehat{k}}$ be
\begin{equation}
\label{eq:L2L1red2}
\widehat{\c}^{*} = \argmin \sum_{i \in \Lambda_{\widehat{k}}} \| \c[i] \|_q \quad \st \quad \x = \! \sum_{i \in \Lambda_{\widehat{k}}} \B[i] \c[i].
\end{equation}
%
%
The following theorem gives conditions under which the convex program $P_{\ell_q/\ell_1}$ is guaranteed to successfully recover a $k$-block-sparse representation of a given signal.
\vspace{1mm}
\begin{theorem}
\label{thm:SuffRedundant} 
For all signals that have a unique $k$-block-sparse representation in $\B$, the solution of the optimization program $P_{\ell_q/\ell_1}$ is equivalent to that of $P_{\ell_q/\ell_0}$, if and only if 
\begin{multline}
\label{eq:SuffRedundant}
\forall \Lambda_k, \, \forall \x \in (\oplus_{i \in \Lambda_k}\mathcal{S}_i) \; \cap \; (\oplus_{i \in \Lambda_{\widehat{k}}}\mathcal{S}_i), \, \x \neq \0  \\ \implies \sum_{i \in \Lambda_k} \| \breve{\c}^*[i] \|_q < \sum_{i \in \Lambda_{\widehat{k}}} \| \widehat{\c}^*[i] \|_q.
\end{multline}
\end{theorem}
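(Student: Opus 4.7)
The plan is to prove the two directions of the equivalence separately, organized around the gauge function $M_{\Lambda_k}(\z) \triangleq \min\{\sum_{i\in\Lambda_k}\|\c[i]\|_q : \sum_{i\in\Lambda_k}\B[i]\c[i]=\z\}$, which is the minimum $\ell_q/\ell_1$-norm of a representation of $\z$ drawn from blocks in $\Lambda_k$. This gauge is subadditive and positively homogeneous, and by definition satisfies $M_{\Lambda_k}(\x)=\sum_{i\in\Lambda_k}\|\breve{\c}^*[i]\|_q$ and $M_{\Lambda_{\widehat{k}}}(\x)=\sum_{i\in\Lambda_{\widehat{k}}}\|\widehat{\c}^*[i]\|_q$; the condition \eqref{eq:SuffRedundant} is simply $M_{\Lambda_k}(\x)<M_{\Lambda_{\widehat{k}}}(\x)$ for every nonzero $\x$ in the intersection of the two direct sums.

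For sufficiency, I would fix $\y$ with a unique $k$-block-sparse representation on $\Lambda_k$ and let $\tilde{\c}$ be a minimum $\ell_q/\ell_1$-norm representation of $\y$ using blocks in $\Lambda_k$, so $\sum_i \|\tilde{\c}[i]\|_q=M_{\Lambda_k}(\y)$. Consider an arbitrary feasible $\c'$ of $P_{\ell_q/\ell_1}$ with some nonzero block in $\Lambda_{\widehat{k}}$, and define $\x\triangleq\sum_{i\in\Lambda_{\widehat{k}}}\B[i]\c'[i]$, so that $\c'|_{\Lambda_k}$ represents $\y-\x$ in $\Lambda_k$ and $-\c'|_{\Lambda_{\widehat{k}}}$ represents $-\x$ in $\Lambda_{\widehat{k}}$. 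I would split into two cases. If $\x=\0$, then $\c'|_{\Lambda_k}$ itself represents $\y$ in $\Lambda_k$, so $\sum_{i\in\Lambda_k}\|\c'[i]\|_q\ge M_{\Lambda_k}(\y)$, and the strictly positive contribution from $\sum_{i\in\Lambda_{\widehat{k}}}\|\c'[i]\|_q$ gives a strict inequality. If $\x\neq\0$, then $\x$ lies in the intersection of the two direct sums and the hypothesis applies; combining the bound $\sum_{i\in\Lambda_k}\|\c'[i]\|_q \ge M_{\Lambda_k}(\y-\x) \ge M_{\Lambda_k}(\y)-M_{\Lambda_k}(\x)$ (reverse triangle for the gauge) with $\sum_{i\in\Lambda_{\widehat{k}}}\|\c'[i]\|_q\ge M_{\Lambda_{\widehat{k}}}(\x)$ yields
\[
\textstyle\sum_i \|\c'[i]\|_q \;\ge\; \sum_i\|\tilde{\c}[i]\|_q + \bigl(M_{\Lambda_{\widehat{k}}}(\x)-M_{\Lambda_k}(\x)\bigr),
\]
which is strictly larger than $\sum_i\|\tilde{\c}[i]\|_q$ by \eqref{eq:SuffRedundant}. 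Hence every optimal $\c'$ of $P_{\ell_q/\ell_1}$ is supported in $\Lambda_k$ and achieves $M_{\Lambda_k}(\y)$, giving a $k$-block-sparse solution on the correct blocks.

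For necessity, I would argue by contrapositive. Suppose there exist $\Lambda_k$ and $\x\neq\0$ in the intersection with $M_{\Lambda_k}(\x)\ge M_{\Lambda_{\widehat{k}}}(\x)$. Take $\y$ to be $t\x$ for a scalar $t>1$ chosen large enough that $\y$ has a unique $k$-block-sparse representation (this is where the premise of the theorem, together with Proposition~1 / Corollary~1, is used to guarantee that such a scaling exists); the natural $k$-block-sparse representative is $\c^*=t\breve{\c}^*$ supported on $\Lambda_k$, with objective $t\,M_{\Lambda_k}(\x)$. Then the competitor $\c''$ defined by $\c''[i]=(t-1)\breve{\c}^*[i]$ for $i\in\Lambda_k$ and $\c''[i]=\widehat{\c}^*[i]$ for $i\in\Lambda_{\widehat{k}}$ is feasible (one checks $\B\c''=t\x-\x+\x=\y$) and has objective $(t-1)M_{\Lambda_k}(\x)+M_{\Lambda_{\widehat{k}}}(\x)\le t\,M_{\Lambda_k}(\x)$. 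Since $\c''$ uses more than $k$ blocks, $P_{\ell_q/\ell_1}$ cannot be equivalent to $P_{\ell_q/\ell_0}$ on this $\y$.

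The main obstacle will be the $\x=\0$ sub-case of sufficiency: when blocks of $\Lambda_{\widehat{k}}$ are redundant, a competitor $\c'$ can have $\c'|_{\Lambda_{\widehat{k}}}$ lying in the kernel of $\bigl[\B[i]\bigr]_{i\in\Lambda_{\widehat{k}}}$ without contributing to $\y$, so the condition \eqref{eq:SuffRedundant} alone gives no leverage there; the extra strict positivity of $\sum_{i\in\Lambda_{\widehat{k}}}\|\c'[i]\|_q$ must be exploited directly. A secondary technical point is verifying, in the necessity step, that the scaled signal really does have a unique $k$-block-sparse representation (so that the counter-example lives inside the class of signals covered by the theorem), which requires invoking the paper's uniqueness machinery rather than the condition itself.
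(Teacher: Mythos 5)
Your argument is essentially the paper's. For sufficiency, the gauge $M_{\Lambda_k}$ is a repackaging of the paper's $\breve{\c}^*$ and $\widehat{\c}^*$: the paper fixes the optimizer $\c^*$ of $P_{\ell_q/\ell_1}$, forms the very same vector $\x = \y - \sum_{i\in\Lambda_k}\B[i]\c^*[i] = \sum_{i\in\Lambda_{\widehat{k}}}\B[i]\c^*[i]$, and derives the same chain of inequalities, phrased as ``the perturbation $\c^*+\breve{\c}^*$ strictly beats $\c^*$'' rather than as your direct lower bound on an arbitrary competitor; the two are algebraically identical. Your explicit handling of the $\x=\0$ sub-case (zeroing the $\Lambda_{\widehat{k}}$ blocks strictly decreases the objective) is a genuine improvement in rigor: the paper silently skips this case, where condition \eqref{eq:SuffRedundant} gives no leverage, and your fix is the right one.

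The one step that does not hold up is in the necessity direction: the claim that $t>1$ can be ``chosen large enough that $t\x$ has a unique $k$-block-sparse representation'' is vacuous, since uniqueness of a $k$-block-sparse representation is invariant under nonzero scaling of the signal (scale the coefficients accordingly), so no choice of $t$ buys anything that $t=1$ does not already give. The paper simply takes $\y=\x$ and exhibits $\widehat{\c}^*$ itself as a feasible, non-$k$-block-sparse solution whose $\ell_q/\ell_1$-norm is no larger than that of any $\Lambda_k$-supported solution — i.e., your construction at $t=1$. The residual issues you correctly flag (verifying that the counterexample signal lies in the class of signals with a unique $k$-block-sparse representation, and that the competitor genuinely uses more than $k$ blocks) are left unaddressed by the paper as well, so they are not defects relative to the published proof; but the scaling device neither resolves them nor is needed.
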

%
%
%
\begin{proof}
$(\Longleftarrow)$ Fix $\Lambda_k$ and $\y$ in $\oplus_{i \in \Lambda_k}{\mathcal{S}_i}$ and let $\c^*$ be the solution of $P_{\ell_q/\ell_1}$. If $\c^*$ has at most $k$ nonzero blocks, then by the uniqueness assumption the nonzero blocks are indexed by $\Lambda_k$. For the sake of contradiction, assume that $\c^*$ has more than $k$ nonzero blocks, so $\c^*$ is nonzero for some blocks in $\Lambda_{\widehat{k}}$. Define 
\begin{equation}
\label{eq:ybar-red}
\x \triangleq \y - \sum_{i \in \Lambda_k} \B[i] \c^*[i] = \sum_{i \in \Lambda_{\widehat{k}}} \B[i] \c^*[i].
\end{equation}
From \eqref{eq:ybar-red} we have that $\x$ lives in the intersection of $\oplus_{i \in \Lambda_k} \mathcal{S}_i$ and $\oplus_{i \in \Lambda_{\widehat{k}}} \mathcal{S}_i$. Let $\breve{\c}^*$ and $\widehat{\c}^*$ be respectively the solutions of the optimization problems in \eqref{eq:L2L1red1} and \eqref{eq:L2L1red2}, for $\x$. We can write
\begin{equation}
\label{eq:ybar-red2}
\x = \sum_{i \in \Lambda_k} \B[i] \breve{\c}^*[i] = \sum_{i \in \Lambda_{\widehat{k}}} \B[i] \widehat{\c}^*[i].
\end{equation}
We also have the following inequalities
\begin{equation}
\label{eq:medsuf}
\sum_{i \in \Lambda_k}{\| \breve{\c}^*[i] \|_q} < \sum_{i \in \Lambda_{\widehat{k}}}{\| \widehat{\c}^*[i] \|_q} \leq \sum_{i \in \Lambda_{\widehat{k}}}{\| \c^*[i] \|_q },\end{equation}
where the first inequality follows from the sufficient condition in \eqref{eq:SuffRedundant}. The second inequality follows from the second inequalities in \eqref{eq:ybar-red} and \eqref{eq:ybar-red2} and the fact that $\widehat{\c}^*$ is the optimal solution of \eqref{eq:L2L1red2} for $\x$.
Using the first equalities in \eqref{eq:ybar-red} and \eqref{eq:ybar-red2}, we can rewrite $\y$ as 
\begin{equation}
\y = \sum_{i \in \Lambda_k} \B[i] ( \c^*[i] + \breve{\c}^*[i] ),
\end{equation}
which implies that $\c^* + \breve{\c}^*$ is a solution of $\y = \B \c$. 
Finally, using \eqref{eq:medsuf} and the triangle inequality, we obtain
%
%
\begin{multline}
\label{eq:trianglesuff}
\sum_{i \in \Lambda_k} \! \| \c^*[i] + \breve{\c}^*[i] \|_q \leq 
\sum_{i \in \Lambda_k} \! \| \c^*[i] \|_q + \sum_{i \in \Lambda_k} \! \| \breve{\c}^*[i] \|_q  \\< 
\sum_{i \in \Lambda_k} \! \| \c^*[i] \|_q + \sum_{i \in \Lambda_{\widehat{k}}} \! \| \widehat{\c}^*[i] \|_q  \leq 
\sum_{i =1}^{n} \! {\| \c^*[i] \|_q }.
\end{multline}
This contradicts the optimality of $\c^*$, since it means that $\c^* + \breve{\c}^*$, which is also a solution of $\y = \B \c$, has a strictly smaller $\ell_q/\ell_1$-norm than $\c^*$.

\noindent $(\Longrightarrow)$ We prove this using contradiction. Assume there exist $\Lambda_k$ and $\x$ in the intersection of $\oplus_{i \in \Lambda_k}{\mathcal{S}_i}$ and $\oplus_{i \in \Lambda_{\widehat{k}}}{\mathcal{S}_i}$ for which the condition in \eqref{eq:SuffRedundant} does not hold, \ie,
\begin{equation}
\sum_{i \in \Lambda_{\widehat{k}}} \| \widehat{\c}^*[i] \|_q \leq \sum_{i \in \Lambda_k} \| \breve{\c}^*[i] \|_q.
\end{equation}
Thus, a solution of $\x = \B \c$ is given by $\widehat{\c}^*$ that is not $k$-block-sparse and has a $\ell_q/\ell_1$-norm that is smaller  than or equal to any $k$-block-sparse solution, contradicting the equivalence of $P_{\ell_q/\ell_1}$ and $P_{\ell_q/\ell_0}$.
\end{proof}
\vspace{1mm}
The condition of Theorem \ref{thm:SuffRedundant} (and Theorem \ref{thm:SuffRedundant2} in the next section) is closely related to the nullspace property in \cite{VandenBerg:TIT10, Lai:ACHA11, Stojnic:TSP09, Donoho:DCG06}. However, the key difference is that we do not require the condition of Theorem \ref{thm:SuffRedundant} to hold for all feasible vectors of \eqref{eq:L2L1red1} and \eqref{eq:L2L1red2}, denoted by $\breve{\c}$ and $\widehat{\c}$, respectively. Instead, we only require the condition of Theorem \ref{thm:SuffRedundant} to hold for the optimal solutions of \eqref{eq:L2L1red1} and \eqref{eq:L2L1red2}. Thus, while the nullspace property might be violated by some feasible vectors $\breve{\c}$ and $\widehat{\c}$, our condition can still hold for  $\breve{\c}^*$ and $\widehat{\c}^*$, guaranteeing the equivalence of the two optimization programs. 

Notice that it is not possible to check the condition in \eqref{eq:SuffRedundant} for every $\Lambda_k$ and for every $\x$ in the intersection of $\oplus_{i \in \Lambda_k}{\mathcal{S}_i}$ and $\oplus_{i \in \Lambda_{\widehat{k}}}{\mathcal{S}_i}$. In addition, the condition in \eqref{eq:SuffRedundant} does not explicitly incorporate the inter-block and intra-block parameters of the dictionary. In what follows, we propose sufficient conditions that incorporate the inter-block and intra-block parameters of the dictionary and can be efficiently checked. We use the following Lemma whose proof is provided in the Appendix. 
\vspace{1mm}
\begin{lemma}
\label{lem:matrix}
Let $\E_k \in \Re^{D \times k}$ be a matrix whose columns are chosen from subspaces indexed by $\Lambda_k$ and $\E_k \in \mathbb{B}_{\alpha}(\Lambda_k)$ for a fixed $\alpha \in [0,1)$. Let $\E_{\widehat{k}} \in \Re^{D \times n-k}$ be a matrix whose columns are chosen from subspaces indexed by $\Lambda_{\widehat{k}}$ where the Euclidean norm of each column is less than or equal to $\sqrt{1+\beta}$. We have
\begin{equation}
\| (\E_k^{\top} \E_k )^{-1} \E_k^{\top} \E_{\widehat{k}} \|_{1,1} \leq \frac{ \sqrt{(1+\alpha) (1+\beta)} \, \zeta_k }{ 1 - [\, \alpha + (1+\alpha) \zeta_{k-1} \,] }.
\end{equation}
\end{lemma}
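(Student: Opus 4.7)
The plan is to bound the $(1,1)$-induced norm of $\G^{-1}\H$, where $\G \triangleq \E_k^{\top}\E_k$ is a $k \times k$ Gram matrix and $\H \triangleq \E_k^{\top}\E_{\widehat{k}}$ is the $k \times (n-k)$ cross-Gram matrix, by invoking submultiplicativity: $\|\G^{-1}\H\|_{1,1} \leq \|\G^{-1}\|_{1,1}\,\|\H\|_{1,1}$. Each factor will be handled using the subspace coherence definitions and the norm constraints defining $\mathbb{B}_{\alpha}(\Lambda_k)$.

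First I would bound $\|\H\|_{1,1}$, which is the maximum absolute column sum of $\H$. A generic column of $\H$ collects inner products $\langle \e_i, \widehat{\e}_j \rangle$ for $i \in \Lambda_k$, where $\widehat{\e}_j$ lies in some $\S_{l}$ with $l \in \Lambda_{\widehat{k}}$. Since $\|\e_i\|_2 \leq \sqrt{1+\alpha}$ and $\|\widehat{\e}_j\|_2 \leq \sqrt{1+\beta}$, and by Definition \ref{def:subspaceangle}, $|\langle \e_i, \widehat{\e}_j \rangle| \leq \mu(\S_i,\S_l)\,\|\e_i\|_2\|\widehat{\e}_j\|_2$. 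Summing over $i \in \Lambda_k$ and applying Definition \ref{def:cumsubang} (with $l \notin \Lambda_k$) yields $\sum_{i \in \Lambda_k}\mu(\S_i,\S_l) \leq \zeta_k$, hence $\|\H\|_{1,1} \leq \sqrt{(1+\alpha)(1+\beta)}\,\zeta_k$.

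Next I would bound $\|\G^{-1}\|_{1,1}$ via a Neumann series expansion around the identity. Writing $\G = \I + (\G - \I)$, the diagonal entries of $\G - \I$ satisfy $|\G_{ii} - 1| = |\|\e_i\|_2^2 - 1| \leq \alpha$, and the off-diagonal entries satisfy $|\G_{ij}| = |\langle \e_i, \e_j \rangle| \leq (1+\alpha)\mu(\S_{i},\S_{j})$. Summing over row indices for a fixed column $i \in \Lambda_k$, the off-diagonal contribution involves $k-1$ subspaces drawn from $\Lambda_k\setminus\{i\}$, and Definition \ref{def:cumsubang} bounds this sum by $\zeta_{k-1}$. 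Therefore $\|\G - \I\|_{1,1} \leq \alpha + (1+\alpha)\zeta_{k-1}$. Assuming this quantity is strictly less than $1$ (which is implicit in the statement, since otherwise the claimed bound is vacuous), the series $\sum_{m \geq 0}(\I - \G)^m$ converges absolutely in the $(1,1)$-norm to $\G^{-1}$, giving
\begin{equation*}
\|\G^{-1}\|_{1,1} \leq \frac{1}{1 - \alpha - (1+\alpha)\zeta_{k-1}}.
\end{equation*}

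Combining the two estimates via submultiplicativity completes the argument. The main technical care-points are (i) verifying that the bound $|\G_{ij}| \leq (1+\alpha)\mu(\S_i,\S_j)$ comes cleanly from the norm constraint in \eqref{eq:Wi}, and (ii) correctly indexing the cumulative subspace coherence so that the diagonal contribution produces $\zeta_{k-1}$ (a sum over $k-1$ other subspaces) rather than $\zeta_k$. These are routine once the decomposition is set up; the only substantive obstacle is confirming that $\G$ is indeed invertible, which the Neumann argument itself provides as a by-product under the hypothesis $\alpha + (1+\alpha)\zeta_{k-1} < 1$.
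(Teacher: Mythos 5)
Your proof is correct and follows essentially the same route as the paper's: submultiplicativity of the $(1,1)$-norm, the bound $\|\E_k^{\top}\E_{\widehat{k}}\|_{1,1} \leq \sqrt{(1+\alpha)(1+\beta)}\,\zeta_k$ via subspace coherence and the column-norm constraints, and a Neumann-series bound on $\|(\I_k+\D)^{-1}\|_{1,1}$ with $\|\D\|_{1,1} \leq \alpha + (1+\alpha)\zeta_{k-1}$. Your version of the Neumann step is in fact cleaner than the paper's, which contains a typographical slip in the series expansion, and your explicit remark that invertibility follows as a by-product of $\|\D\|_{1,1}<1$ is a point the paper leaves implicit.
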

\vspace{1mm}
%
%
%
%
\begin{proposition}
\label{prop:suff1}
For signals that have a unique $k$-block-sparse representation in $\B$, the solution of the optimization program $P_{\ell_q/\ell_1}$ is equivalent to that of $P_{\ell_q/\ell_0}$, if
%
\begin{equation}
\label{eq:cumsubLqL1suff}
\sqrt{\frac{1+\sigma_q}{1+\epsilon_q}} \; \zeta_k + \zeta_{k-1} < \frac{1-\epsilon_q}{1+\epsilon_q}.
\end{equation}
\end{proposition}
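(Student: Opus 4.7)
The plan is to reduce the Proposition to the sufficient condition of Theorem \ref{thm:SuffRedundant} and then control the resulting ratio of $\ell_q/\ell_1$-norms by applying Lemma \ref{lem:matrix} to suitably normalized subspace vectors. So I fix $\Lambda_k$ and a nonzero $\x \in (\oplus_{i \in \Lambda_k}\S_i) \cap (\oplus_{i \in \Lambda_{\widehat{k}}}\S_i)$, let $\breve{\c}^{*}$ and $\widehat{\c}^{*}$ be the minimizers in \eqref{eq:L2L1red1} and \eqref{eq:L2L1red2}, and set $\breve{\s}_i \triangleq \B[i]\breve{\c}^{*}[i] \in \S_i$ and $\widehat{\s}_i \triangleq \B[i]\widehat{\c}^{*}[i] \in \S_i$. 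Because $\bar{\B}[i]$ spans $\S_i$, there is a unique $\breve{\u}_i$ with $\bar{\B}[i]\breve{\u}_i = \breve{\s}_i$; the zero-padding of $\breve{\u}_i$ is feasible for \eqref{eq:L2L1red1}, so optimality of $\breve{\c}^{*}$ gives $\|\breve{\c}^{*}[i]\|_q \leq \|\breve{\u}_i\|_q$.

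I would then normalize by $\tilde{\breve{\s}}_i \triangleq \breve{\s}_i/\|\breve{\u}_i\|_q$ and $\tilde{\widehat{\s}}_i \triangleq \widehat{\s}_i/\|\widehat{\c}^{*}[i]\|_q$ (dropping indices where the denominator vanishes, which contribute nothing). Definitions \ref{def:genRIP} and \ref{def:genRIP2} immediately yield $\|\tilde{\breve{\s}}_i\|_2 \in [\sqrt{1-\epsilon_q},\sqrt{1+\epsilon_q}]$ and $\|\tilde{\widehat{\s}}_i\|_2 \leq \sqrt{1+\sigma_q}$. Stacking these columns into matrices $\E_k$ and $\E_{\widehat{k}}$ turns the identity $\sum_{i \in \Lambda_k}\breve{\s}_i = \x = \sum_{i \in \Lambda_{\widehat{k}}}\widehat{\s}_i$ into $\E_k \breve{\a} = \E_{\widehat{k}}\widehat{\a}$, with nonnegative coordinates $\breve{\a}_i = \|\breve{\u}_i\|_q$ and $\widehat{\a}_i = \|\widehat{\c}^{*}[i]\|_q$. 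The uniqueness hypothesis, via Corollary \ref{cor:uniqueness1}, ensures that any $k$ vectors drawn one per subspace are linearly independent, so $\E_k$ has full column rank and $\breve{\a} = (\E_k^{\top}\E_k)^{-1}\E_k^{\top}\E_{\widehat{k}}\widehat{\a}$.

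Applying Lemma \ref{lem:matrix} with $\alpha = \epsilon_q$ and $\beta = \sigma_q$, together with the inequalities $\sum_{i \in \Lambda_k}\|\breve{\c}^{*}[i]\|_q \leq \|\breve{\a}\|_1$ and $\|\widehat{\a}\|_1 = \sum_{i \in \Lambda_{\widehat{k}}}\|\widehat{\c}^{*}[i]\|_q$, yields
\begin{equation*}
\sum_{i \in \Lambda_k}\|\breve{\c}^{*}[i]\|_q \;\leq\; \frac{\sqrt{(1+\epsilon_q)(1+\sigma_q)}\,\zeta_k}{1 - \epsilon_q - (1+\epsilon_q)\zeta_{k-1}} \sum_{i \in \Lambda_{\widehat{k}}}\|\widehat{\c}^{*}[i]\|_q.
\end{equation*}
The sufficient condition of Theorem \ref{thm:SuffRedundant} is met as soon as this multiplicative factor is strictly smaller than one; rearranging that strict inequality and dividing by $1+\epsilon_q$ (after noting that the hypothesis forces the denominator positive, since $\zeta_k \geq 0$) gives exactly $\sqrt{(1+\sigma_q)/(1+\epsilon_q)}\,\zeta_k + \zeta_{k-1} < (1-\epsilon_q)/(1+\epsilon_q)$.

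The main obstacle, in my view, is choosing the right normalization: scaling the $\Lambda_k$ columns by $\|\breve{\u}_i\|_q$ rather than by $\|\breve{\c}^{*}[i]\|_q$ is what allows the intra-block constant $\epsilon_q$ to play the role of $\alpha$ in Lemma \ref{lem:matrix} (giving two-sided control of those column norms), while the one-sided bound $\sqrt{1+\sigma_q}$ on the $\Lambda_{\widehat{k}}$ side enters as $\beta$; this mismatch is what produces the characteristic ratio $(1+\sigma_q)/(1+\epsilon_q)$ in the final condition rather than the weaker $(1+\sigma_q)/(1-\epsilon_q)$ that a naive uniform normalization would give. A secondary technical point is invoking Corollary \ref{cor:uniqueness1} to guarantee that $\E_k$ is invertible on its column space, which is precisely where the assumption of a unique $k$-block-sparse representation is used.
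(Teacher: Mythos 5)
Your argument is correct and follows essentially the same route as the paper's proof: represent $\x$ over $\Lambda_k$ through the full column-rank submatrices $\bar{\B}[i]$, normalize the per-subspace vectors so that Lemma \ref{lem:matrix} applies with $\alpha=\epsilon_q$ and $\beta=\sigma_q$, and feed the resulting contraction of $\ell_1$-norms into the condition of Theorem \ref{thm:SuffRedundant}. The only nitpick is that optimality of $\breve{\c}^{*}$ yields the summed inequality $\sum_{i\in\Lambda_k}\|\breve{\c}^{*}[i]\|_q\le\sum_{i\in\Lambda_k}\|\breve{\u}_i\|_q$ rather than the blockwise one you state, but the summed version is all your subsequent display actually uses.
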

\vspace{1mm}
\begin{proof}
Fix a set $\Lambda_k = \{i_1,\ldots,i_k\}$ of $k$ indices from $\{1, \ldots, n \}$ and let $\Lambda_{\widehat{k}} = \{i_{k+1},\ldots,i_n\}$ denote the set of the remaining indices. Consider a signal $\x$ in the intersection of $\oplus_{i \in \Lambda_k} \mathcal{S}_i$ and $\oplus_{i \in \Lambda_{\widehat{k}}} \mathcal{S}_i$. The structure of the proof is as follows. We show that $\x$ can be written as $\x = \B_{k} \a_{k}$, where for the solution of \eqref{eq:L2L1red1}, we have $\sum_{i \in \Lambda_{k}}{\| \breve{\c}^{*}[i] \|_q} \leq \| \a_{k} \|_1$. Also, we show that for the solution of \eqref{eq:L2L1red2}, one can write $\x = \B_{\widehat{k}} \a_{\widehat{k}}$, where $\| \a_{\widehat{k}} \|_1 = \sum_{i \in \Lambda_{\widehat{k}}}{\| \widehat{\c}^{*}[i] \|_q}$. Under the sufficient condition of the Proposition, we show that $\| \a_k \|_1 < \| \a_{\widehat{k}} \|_1$, implying that the condition of Theorem \ref{thm:SuffRedundant} is satisfied.

To start, let $\widehat{\c}^*$ be the solution of the optimization program in \eqref{eq:L2L1red2}. For every $i \in \Lambda_{\widehat{k}}$, define the vectors $\s_i$ and the scalars $a_i$ as follows. If $\widehat{\c}^*[i] \neq 0$ and $\B[i] \widehat{\c}^*[i] \neq 0$, let 
\begin{equation}
\s_i \triangleq \frac{\B[i] \widehat{\c}^*[i]}{ \| \widehat{\c}^*[i] \|_q }, \quad a_i \triangleq \| \widehat{\c}^*[i] \|_q. 
\end{equation}
Otherwise, let $\s_i$ be an arbitrary vector in $\mathcal{S}_i$ of unit Euclidean norm and $a_i = 0$. We can write
\begin{equation}
\label{eq:op2}
\x = \sum_{i \in \Lambda_{\widehat{k}}} \B[i] \widehat{\c}^*[i] = \B_{\widehat{k}} \a_{\widehat{k}},
\end{equation}
where $\B_{\widehat{k}} \triangleq \begin{bmatrix} \s_{i_{k+1}} \!\!& \cdots \!\!& \s_{i_n} \end{bmatrix}$ and $\a_{\widehat{k}} \triangleq \begin{bmatrix} a_{i_{k+1}} \!\! & \cdots \!\! & a_{i_n} \end{bmatrix}^{\top}$. Note that from Definition \ref{def:genRIP2}, we have $\| \s_{i} \|_2 \leq \sqrt{1 + \sigma_q}$ for every $i \in \Lambda_{\widehat{k}}$. 

%
Let $\bar{\B}[i] \in \Re^{D \times d_i}$ be the submatrix of $\B[i]$ associated with $\epsilon_{q}$ according to Definition \ref{def:genRIP}. Since $\bar{\B}[i]$ spans the subspace $\mathcal{S}_i$, there exists $\bar{\c}[i]$ such that 
\begin{equation}
\x = \sum_{i \in \Lambda_k} \bar{\B}[i] \bar{\c}[i] \triangleq \B_k \a_k,
\end{equation}
where $\B_k \triangleq \begin{bmatrix} \s_{i_1} \!\!& \cdots \!\!& \s_{i_k} \end{bmatrix}$ and $\a_k \triangleq \begin{bmatrix} a_{i_1} \!\! & \cdots \!\! & a_{i_k} \end{bmatrix}^{\top}$. For every $i \in \Lambda_k$ the vectors $\s_i$ and the scalars $a_i$ are defined as 
\begin{equation}
\s_i \triangleq \frac{\bar{\B}[i] \bar{\c}[i]}{ \| \bar{\c}[i] \|_q }, \quad a_i \triangleq \| \bar{\c}[i] \|_q,
\end{equation}
whenever $\bar{\c}[i] \neq 0$ and $\bar{\B}[i] \bar{\c}[i] \neq 0$. Otherwise, we let $\s_i$ be an arbitrary vector in $\mathcal{S}_i$ of unit Euclidean norm and $a_i = 0$. Clearly, $\B_k \in \mathbb{B}_{\epsilon_q}(\Lambda_k)$ is full column-rank using Corollary \ref{cor:uniqueness1} when $\epsilon_q \in [0,1)$. Hence, we have $\, \a_k = (\B_k^{\top} \B_k)^{-1}\B_k^{\top} \x \,$ and consequently,
\begin{equation}
\| \a_k \|_1 = \| (\B_k^{\top} \B_k)^{-1}\B_k^{\top} \x \|_1.
\end{equation}
Substituting $\y$ from \eqref{eq:op2} in the above equation, we obtain
\begin{multline}
\label{eq:interm1}
\| \a_k \|_1 = \| (\B_k^{\top} \B_k)^{-1}\B_k^{\top} \B_{\widehat{k}} \a_{\widehat{k}} \|_1 \\ \leq \| (\B_k^{\top} \B_k)^{-1}\B_k^{\top} \B_{\widehat{k}} \|_{1,1} \| \a_{\widehat{k}} \|_1.
\end{multline}
Using Lemma \ref{lem:matrix} with $\alpha = \epsilon_q$ and $\beta = \sigma_q$, we have
\begin{equation}
\| (\B_k^{\top} \B_k)^{-1} \B_k^{\top} \B_{\widehat{k}} \|_1 \leq \frac{ \sqrt{(1+\epsilon_q) (1+\sigma_q)} \, \zeta_k }{ 1 - [ \, \epsilon_q  + (1+\epsilon_q) \zeta_{k-1} \,] }. 
\end{equation}

Thus, if the right hand side of the above equation is strictly less than one, \ie, if the condition of the proposition is satisfied, then from \eqref{eq:interm1} we have $\| \a_k \|_1 < \| \a_{\widehat{k}} \|_1$. Finally, using the optimality of $\breve{\c}^*$ when we choose the blocks indexed by $\Lambda_k$, we obtain
%
%
\begin{equation}
\sum_{i \in \Lambda_k}{\!\| \breve{\c}^*[i] \|_q} \leq \sum_{i \in \Lambda_k}{\!\| \bar{\c}[i] \|_q} \!=\! \| \a_k \|_1  < \| \a_{\widehat{k}} \|_1 \!=\!\! \sum_{i \in \Lambda_{\widehat{k}}}{\! \| \widehat{\c}^*[i] \|_q},
\end{equation}
which implies that the condition of Theorem \ref{thm:SuffRedundant} is satisfied. Thus, the convex program $P_{\ell_q/\ell_1}$ recovers a $k$-block-sparse representation of a given signal. 
\end{proof}
\smallskip
The following corollary derives stronger, but simpler to check, sufficient conditions for block-sparse recovery using $P_{\ell_q/\ell_1}$.
\vspace{1mm}
\begin{corollary}
\label{cor:suff1}
For signals that have a unique $k$-block-sparse representation in $\B$, the solution of the optimization program $P_{\ell_q/\ell_1}$ is equivalent to that of $P_{\ell_q/\ell_0}$, if \footnote{An intermediate sufficient condition is given by $\sqrt{\frac{1+\sigma_q}{1+\epsilon_q}} \; u_k + u_{k-1} < \frac{1-\epsilon_q}{1+\epsilon_q}$ using the fact that $\zeta_k \leq u_k \leq k \mu_S$.}
%
\begin{equation}
\label{eq:mutsubLqL1suff}
( k \sqrt{\frac{1+\sigma_q}{1+\epsilon_q}} + k-1 ) \mu_S < \frac{1-\epsilon_q}{1+\epsilon_q}.
\end{equation}
\end{corollary}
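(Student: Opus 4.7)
The plan is that this corollary follows as an immediate consequence of Proposition \ref{prop:suff1} combined with Lemma \ref{lem:cum-mut}. The idea is simply to replace the cumulative subspace coherences $\zeta_k$ and $\zeta_{k-1}$ appearing in the sufficient condition \eqref{eq:cumsubLqL1suff} with their upper bounds in terms of the mutual subspace coherence $\mu_S$.

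Concretely, I would proceed as follows. First, I would invoke Lemma \ref{lem:cum-mut} to write $\zeta_k \leq k \mu_S$ and $\zeta_{k-1} \leq (k-1)\mu_S$. Since the coefficient $\sqrt{(1+\sigma_q)/(1+\epsilon_q)}$ multiplying $\zeta_k$ in \eqref{eq:cumsubLqL1suff} is non-negative, these inequalities can be substituted into the left-hand side of \eqref{eq:cumsubLqL1suff} without flipping its direction. This yields
\begin{equation*}
\sqrt{\frac{1+\sigma_q}{1+\epsilon_q}} \, \zeta_k + \zeta_{k-1} \;\leq\; \Bigl(k\sqrt{\frac{1+\sigma_q}{1+\epsilon_q}} + k-1\Bigr) \mu_S.
\end{equation*}
Consequently, whenever the hypothesis \eqref{eq:mutsubLqL1suff} of the corollary holds, the right-hand side above is strictly less than $(1-\epsilon_q)/(1+\epsilon_q)$, and so the condition \eqref{eq:cumsubLqL1suff} of Proposition \ref{prop:suff1} is satisfied.

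The conclusion of the corollary then follows directly from Proposition \ref{prop:suff1}: the convex program $P_{\ell_q/\ell_1}$ is equivalent to $P_{\ell_q/\ell_0}$ for every signal with a unique $k$-block-sparse representation in $\B$. There is no real obstacle here since all of the nontrivial work has already been absorbed into Proposition \ref{prop:suff1} (where the geometric argument based on Theorem \ref{thm:SuffRedundant} and Lemma \ref{lem:matrix} was carried out) and into Lemma \ref{lem:cum-mut} (whose proof is deferred to the appendix). The only small caveat to mention explicitly is that the substitution requires the coefficient of $\zeta_k$ to be non-negative, which is clear since $\epsilon_q \in [0,1)$ and $\sigma_q \geq 0$ by definition.
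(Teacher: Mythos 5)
Your proposal is correct and follows exactly the paper's own argument: the corollary is obtained from Proposition \ref{prop:suff1} by bounding $\zeta_k \leq k\mu_S$ and $\zeta_{k-1} \leq (k-1)\mu_S$ via Lemma \ref{lem:cum-mut}. The extra remark about the non-negativity of the coefficient of $\zeta_k$ is a harmless (and correct) elaboration of the same one-line deduction.
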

\vspace{1mm}
\begin{proof}
The result follows from Proposition \ref{prop:suff1} by using the fact that $\zeta_k \leq k \mu_S$ from Lemma \ref{lem:cum-mut}.
\end{proof}
\vspace{1mm}

For non-redundant blocks, we have $\sigma_q = \epsilon_q$. Thus, in this case, for the convex program $P_{\ell_q/\ell_1}$, the block-sparse recovery condition based on the mutual subspace coherence in \eqref{eq:mutsubLqL1suff} reduces to
\begin{equation}
\label{eq:mutsubLqL1suff-nonred}
(2 k - 1) \mu_S < \frac{1-\epsilon_q}{1+\epsilon_q}.
\end{equation}
Also, for non-redundant blocks with $q=2$, $\epsilon_2$ coincides with the $1$-block restricted isometry constant $\delta_{B,1}$ defined in \eqref{eq:block-RIP}. In this case, note that our result in \eqref{eq:mutsubLqL1suff-nonred} is different from the result of \cite{Eldar:TSP10} stated in \eqref{eq:block-coh-cond}. First, the notion of mutual subspace coherence is different from the notion of block coherence because they are defined as the largest singular values of two different matrices. Second, the bound on the right hand side of \eqref{eq:mutsubLqL1suff-nonred} is a function of the best intra-block $q$-restricted isometry constant of $\B$, while the right hand side of \eqref{eq:block-coh-cond} is a function of the maximum mutual-coherence over all blocks of $\B$. 


For non-redundant blocks, the block-sparse recovery condition based on the cumulative subspace coherence in \eqref{eq:cumsubLqL1suff} reduces to
\begin{equation}
\label{eq:cumsubLqL1suff-nonred}
\zeta_k + \zeta_{k-1} < \frac{1-\epsilon_q}{1+\epsilon_q},
\end{equation}
which is always weaker than the condition based on the mutual subspace coherence in \eqref{eq:mutsubLqL1suff-nonred}. In addition, when $q=2$, \eqref{eq:cumsubLqL1suff-nonred} provides a weaker condition than the one in \eqref{eq:block-coh-cond} which is based on the notion of block-coherence \cite{Eldar:TSP10}.

\section{Block-Sparse Recovery via $P'_{\ell_q/\ell_1}$}
\label{sec:L2L1P}

In this section, we consider the problem of block-sparse recovery from the non-convex optimization program
\begin{equation*}
\label{eq:optL2L0M}
P'_{\ell_q/\ell_0}: \; \min \sum_{i=1}^{n} I(\| \B[i] \c[i] \|_q) \quad \st \quad \y = \B \c.
\end{equation*}
Unlike $P_{\ell_q/\ell_0}$ that penalizes the norm of the coefficient blocks, $P'_{\ell_q/\ell_0}$ penalizes the norm of the reconstructed vectors from the blocks. Hence, $P'_{\ell_q/\ell_0}$ finds a solution that has the minimum number of nonzero vectors $\B[i] \c[i] \in \S_i$. For a dictionary with non-redundant blocks, the solution of $P'_{\ell_q/\ell_0}$ has also the minimum number of nonzero coefficient blocks. However, this does not necessarily hold for a dictionary with redundant blocks since a nonzero $\c[i]$ in the nullspace of the non-contributing blocks ($\B[i] \c[i] = \0$) does not affect either the value of the cost function or the equality constraint in $P'_{\ell_q/\ell_0}$.

Despite the above argument, we consider $P'_{\ell_q/\ell_0}$ and its convex relaxation for block-sparse recovery in generic dictionaries with non-redundant or redundant blocks, because one can simply set to zero the nonzero blocks $\c^*[i]$ for which $\B[i] \c^*[i] $ is zero.\footnote{For noisy data, this can be modified by setting to zero the blocks $\c^*[i]$ for which $\| \B[i] \c^*[i] \|_q$ is smaller than a threshold. In practice, to prevent overfitting, one has to add a small regularization on the coefficients to the optimization program.} Another reason, that we explain in more details in Section \ref{sec:experiment}, comes from the fact that in some tasks such as classification, we are mainly concerned with finding the contributing blocks rather than being concerned with the representation itself. 

Since $P'_{\ell_q/\ell_0}$ is NP-hard, we consider its $\ell_1$ relaxation
\begin{equation*}
\label{eq:optL2L1M}
P'_{\ell_q/\ell_1}: \; \min \sum_{i=1}^{n} \| \B[i] \c[i] \|_q \quad \st \quad \y = \B \c,
\end{equation*}
which is a convex program for $q \geq 1$. 
Our approach to guarantee the equivalence of $P'_{\ell_q/\ell_1}$ and $P'_{\ell_q/\ell_0}$ is similar to the one in the previous section. Let $\Lambda_k$ be a set of $k$ indices from $\{1, \ldots, n \}$ and $\Lambda_{\widehat{k}}$ be the set of the remaining indices. For a nonzero signal $\x$ in the intersection of $\oplus_{i \in \Lambda_k}{\mathcal{S}_i}$ and $\oplus_{i \in \Lambda_{\widehat{k}}}{\mathcal{S}_i}$, let the minimum $\ell_q/\ell_1$-norm coefficient vector when we choose only the blocks of $\B$ indexed by $\Lambda_k$ be
\begin{equation}
\label{eq:L2L1red3}
\breve{\c}^* = \argmin \sum_{i \in \Lambda_k} \! \| \B[i] \c[i] \|_q~~\st~~\x = \sum_{i \in \Lambda_k} \B[i] \c[i].
\end{equation}
Also, let the minimum $\ell_q/\ell_1$-norm coefficient vector when we choose the blocks of $\B$ indexed by $\Lambda_{\widehat{k}}$ be
\begin{equation}
\label{eq:L2L1red4}
\widehat{\c}^{*} = \argmin \sum_{i \in \Lambda_{\widehat{k}}} \! \| \B[i] \c[i] \|_q~~\st~~\x \!=\! \sum_{i \in \Lambda_{\widehat{k}}} \B[i] \c[i].
\end{equation}
%
We have the following result.
\vspace{1mm}
\begin{theorem}
\label{thm:SuffRedundant2}
For all signals that have a unique $k$-block-sparse representation in $\B$, the solution of the optimization program $P'_{\ell_q/\ell_1}$ is equivalent to that of $P'_{\ell_q/\ell_0}$, if and only if  
%
\begin{multline}
\label{eq:SuffRedundant1}
\forall \Lambda_k, \forall \x  \in (\oplus_{i \in \Lambda_k} \mathcal{S}_i) \; \cap \; (\oplus_{i \in \Lambda_{\widehat{k}}} \mathcal{S}_i), \, \x \neq \0 \\ \implies \sum_{i \in \Lambda_k} \| \B[i] \breve{\c}^*[i] \|_q \!<\! \sum_{i \in \Lambda_{\widehat{k}}} \| \B[i] \widehat{\c}^*[i] \|_q.
\end{multline}
\end{theorem}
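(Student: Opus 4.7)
The plan is to mirror the proof of Theorem \ref{thm:SuffRedundant}, replacing the objective $\sum_i \| \c[i] \|_q$ and the notion of ``block-support'' $\{i : \c[i] \neq \0\}$ by their primed analogues $\sum_i \| \B[i] \c[i] \|_q$ and the reconstruction support $\{i : \B[i] \c[i] \neq \0\}$. Since the uniqueness hypothesis of Proposition \ref{prop:uniqueness1} is phrased in terms of the vectors $\s_i = \B[i] \c[i]$ rather than the coefficient blocks $\c[i]$, this switch is exactly the one consistent with the problem $P'_{\ell_q/\ell_0}$.

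For the ``if'' direction, I would fix $\Lambda_k$ and a signal $\y \in \oplus_{i \in \Lambda_k} \mathcal{S}_i$, let $\c^*$ be any minimizer of $P'_{\ell_q/\ell_1}$, and argue by contradiction assuming $\B[i] \c^*[i] \neq \0$ for some $i \in \Lambda_{\widehat{k}}$. Setting $\x \triangleq \y - \sum_{i \in \Lambda_k} \B[i] \c^*[i] = \sum_{i \in \Lambda_{\widehat{k}}} \B[i] \c^*[i]$ produces a nonzero vector in $(\oplus_{i \in \Lambda_k} \mathcal{S}_i) \cap (\oplus_{i \in \Lambda_{\widehat{k}}} \mathcal{S}_i)$, so the optimizers $\breve{\c}^*$ and $\widehat{\c}^*$ of \eqref{eq:L2L1red3} and \eqref{eq:L2L1red4} are well-defined for this $\x$. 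I would then build the competing candidate $\tilde{\c}$ supported on $\Lambda_k$ with $\tilde{\c}[i] = \c^*[i] + \breve{\c}^*[i]$ for $i \in \Lambda_k$, which satisfies $\B \tilde{\c} = \y$ because $\sum_{i \in \Lambda_k} \B[i] \breve{\c}^*[i] = \x$. Applying the triangle inequality for $\|\cdot\|_q$ block by block, and chaining it with the strict inequality \eqref{eq:SuffRedundant1} and the optimality of $\widehat{\c}^*$ (which yields $\sum_{i \in \Lambda_{\widehat{k}}} \| \B[i] \widehat{\c}^*[i] \|_q \leq \sum_{i \in \Lambda_{\widehat{k}}} \| \B[i] \c^*[i] \|_q$ since $\c^*$ restricted to $\Lambda_{\widehat{k}}$ reconstructs $\x$), should give $\sum_i \| \B[i] \tilde{\c}[i] \|_q < \sum_i \| \B[i] \c^*[i] \|_q$, contradicting the optimality of $\c^*$. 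The uniqueness hypothesis then forces the reconstruction support of $\c^*$ to be contained in $\Lambda_k$, which is exactly what equivalence with $P'_{\ell_q/\ell_0}$ requires.

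For the ``only if'' direction, I would argue by contrapositive: if \eqref{eq:SuffRedundant1} fails for some $\Lambda_k$ and some nonzero $\x$ in the intersection, I would take $\x$ itself as the observed signal. Its unique $k$-block-sparse representation must use blocks from $\Lambda_k$ with reconstructions $\B[i] \breve{\c}^*[i]$, yet $\widehat{\c}^*$ is a feasible competing point whose reconstructions live entirely in $\Lambda_{\widehat{k}}$ and whose $P'_{\ell_q/\ell_1}$ objective is no larger than that of $\breve{\c}^*$. Hence $P'_{\ell_q/\ell_1}$ cannot certify the correct block pattern, breaking equivalence with $P'_{\ell_q/\ell_0}$.

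The main obstacle is purely bookkeeping rather than any new technical content: for redundant blocks, the coefficient vector $\c[i]$ and the reconstructed vector $\B[i] \c[i]$ disagree on support whenever $\c[i]$ has a nonzero component in the nullspace of $\B[i]$, and such components are invisible to $P'_{\ell_q/\ell_0}$ and $P'_{\ell_q/\ell_1}$. Consequently, throughout the argument I must count sparsity in terms of $\| \B[i] \c[i] \|_q$ and invoke the uniqueness hypothesis at the level of $\s_i \in \mathcal{S}_i$ rather than at the level of coefficients. With this bookkeeping in place, the argument reduces to a verbatim adaptation of Theorem \ref{thm:SuffRedundant}.
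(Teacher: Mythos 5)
Your proposal is correct and follows essentially the same route as the paper: the paper's own proof of Theorem~\ref{thm:SuffRedundant2} explicitly reuses the argument of Theorem~\ref{thm:SuffRedundant}, replacing $\| \c^*[i] \|_q$ by $\| \B[i] \c^*[i] \|_q$ and applying the triangle inequality to $\| \B[i](\c^*[i]+\breve{\c}^*[i]) \|_q$, which is exactly the substitution and bookkeeping you describe, and your converse is the same contradiction via $\widehat{\c}^*$ as a non-$k$-block-sparse competitor with no larger objective.
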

%
%
\begin{proof}
$(\Longleftarrow)$ Let $\y$ be a signal that lives in the subspace $\oplus_{i \in \Lambda_k}{\mathcal{S}_i}$. Denote by $\c^*$ the solution of the optimization program $P'_{\ell_q/\ell_1}$. 
If for at most $k$ blocks of $\c^*$ we have $\B[i] \c^*[i] \neq \0$, then by the uniqueness assumption, these blocks of $\c^*$ are indexed by $\Lambda_k$. For the sake of contradiction, assume that $\B[i] \c^*[i] \neq \0$ for some $i \in \Lambda_{\widehat{k}}$. Define 
\begin{equation}
\x \triangleq \y - \sum_{i \in \Lambda_k} \B[i] \c^*[i]  =  \sum_{i \in \Lambda_{\widehat{k}}} \B[i] \c^*[i]. 
\end{equation}
The remaining steps of the proof are analogous to the proof of Theorem \ref{thm:SuffRedundant} except that we replace $\| \c^*[i] \|_q$ by $\| \B[i] \c^*[i] \|_q$ in \eqref{eq:medsuf} and use the triangle inequality for $\| \B[i] (\c^*[i]+\breve{\c}^*[i]) \|_q$ in \eqref{eq:trianglesuff}.

\noindent $(\Longrightarrow)$ We prove this using contradiction. Assume that there exist $\Lambda_k$ and $\x$ in the intersection of $\oplus_{i \in \Lambda_k}{\mathcal{S}_i}$ and $\oplus_{i \in \Lambda_{\widehat{k}}}{\mathcal{S}_i}$ for which the condition in \eqref{eq:SuffRedundant1} does not hold, \ie,
\begin{equation}
\sum_{i \in \Lambda_{\widehat{k}}} \| \B[i] \widehat{\c}^*[i] \|_q \leq \sum_{i \in \Lambda_k} \| \B[i] \breve{\c}^*[i] \|_q.
\end{equation}
Thus, a solution of $\x = \B \c$ is given by $\widehat{\c}^*$ that is not $k$-block-sparse and whose linear transformation by $\B$ has a $\ell_q/\ell_1$-norm that is smaller than or equal to the norm of the transformation by $\B$ of any $k$-block-sparse solution, contradicting the equivalence of $P'_{\ell_q/\ell_1}$ and $P'_{\ell_q/\ell_0}$.
\end{proof}
\vspace{1mm}
%

Next, we propose sufficient conditions that incorporate the inter-block and intra-block parameters of the dictionary and can be efficiently checked. Before that we need to introduce the following notation.
%
\vspace{1mm}
\begin{definition}
\label{def:normequiv}
Consider a dictionary $\B$ with blocks $\B[i] \in \Re^{D \times m_i}$. Define $\epsilon'_{q}$ as the smallest constant such that for every $i$ and $\c[i]$ we have
\begin{equation}
\label{eq:delta-i}
( 1 - \epsilon'_{q} ) \| \B[i] \c[i] \|_q^2 \leq \! \| \B[i] \c[i] \|_2^2 \leq \! ( 1 + \epsilon'_{q} ) \| \B[i] \c[i] \|_q^2.
\end{equation}
\end{definition}
\vspace{1mm}
Note that $\epsilon'_q$ characterizes the relation between the $\ell_q$ and $\ell_2$ norms of vectors in $\Re^D$ and does not depend on whether the blocks are non-redundant or redundant. In addition, for $q=2$, we have $\epsilon'_2 = 0$.
\vspace{1mm}
\begin{proposition}
\label{prop:suff2}
For signals that have a unique $k$-block-sparse representation in $\B$, the solution of the optimization program $P'_{\ell_q/\ell_1}$ is equivalent to that of $P'_{\ell_q/\ell_0}$, if 
%
\begin{equation}
\label{eq:cumsubLqL1suff-P}
\zeta_k + \zeta_{k-1} < \frac{1-\epsilon'_q}{1+\epsilon'_q}.
\end{equation}
\end{proposition}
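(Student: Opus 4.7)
The plan is to mirror the proof of Proposition \ref{prop:suff1}, replacing the role of $(\epsilon_q,\sigma_q)$ by the single constant $\epsilon'_q$, and to conclude by invoking the necessary-and-sufficient criterion of Theorem \ref{thm:SuffRedundant2}. Concretely, I will fix an arbitrary $\Lambda_k$ and an arbitrary nonzero $\x \in (\oplus_{i \in \Lambda_k}\S_i) \cap (\oplus_{i \in \Lambda_{\widehat{k}}}\S_i)$, and construct two representations $\x = \B_k \a_k = \B_{\widehat k} \a_{\widehat k}$ whose column matrices lie in the sets $\mathbb{B}_{\epsilon'_q}(\Lambda_k)$ and $\mathbb{B}_{\epsilon'_q}(\Lambda_{\widehat k})$ respectively, so that Lemma \ref{lem:matrix} can be applied with $\alpha=\beta=\epsilon'_q$.

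For the $\Lambda_{\widehat k}$-side, I take $\widehat\c^*$ from \eqref{eq:L2L1red4} and, whenever $\B[i]\widehat\c^*[i]\neq 0$, set $\s_i \triangleq \B[i]\widehat\c^*[i]/\|\B[i]\widehat\c^*[i]\|_q$ and $a_i \triangleq \|\B[i]\widehat\c^*[i]\|_q$ (with an arbitrary unit-norm choice in $\S_i$ otherwise). Definition \ref{def:normequiv} immediately gives $(1-\epsilon'_q)\leq\|\s_i\|_2^2\leq(1+\epsilon'_q)$, so the resulting $\B_{\widehat k}$ lies in $\mathbb{B}_{\epsilon'_q}(\Lambda_{\widehat k})$ and $\|\a_{\widehat k}\|_1 = \sum_{i\in\Lambda_{\widehat k}}\|\B[i]\widehat\c^*[i]\|_q$. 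For the $\Lambda_k$-side, because $\x\in\oplus_{i\in\Lambda_k}\S_i$, pick any coefficients $\bar\c[i]$ with $\x=\sum_{i\in\Lambda_k}\B[i]\bar\c[i]$ and normalize identically; uniqueness of the block-sparse representation together with Corollary \ref{cor:uniqueness1} guarantees $\B_k$ is full column rank, so $\a_k=(\B_k^\top\B_k)^{-1}\B_k^\top\x$ is well-defined.

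Substituting $\x=\B_{\widehat k}\a_{\widehat k}$ and using the induced operator-norm inequality gives $\|\a_k\|_1 \le \|(\B_k^\top\B_k)^{-1}\B_k^\top\B_{\widehat k}\|_{1,1}\,\|\a_{\widehat k}\|_1$. Applying Lemma \ref{lem:matrix} with $\alpha=\beta=\epsilon'_q$ bounds this operator norm by $(1+\epsilon'_q)\zeta_k / \bigl(1-\epsilon'_q-(1+\epsilon'_q)\zeta_{k-1}\bigr)$. A direct algebraic rearrangement shows that this bound is strictly less than $1$ precisely under the hypothesis $\zeta_k+\zeta_{k-1} < (1-\epsilon'_q)/(1+\epsilon'_q)$, yielding $\|\a_k\|_1 < \|\a_{\widehat k}\|_1$. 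Finally, the optimality of $\breve\c^*$ for \eqref{eq:L2L1red3} gives $\sum_{i\in\Lambda_k}\|\B[i]\breve\c^*[i]\|_q \le \|\a_k\|_1 < \|\a_{\widehat k}\|_1 = \sum_{i\in\Lambda_{\widehat k}}\|\B[i]\widehat\c^*[i]\|_q$, which is exactly condition \eqref{eq:SuffRedundant1} of Theorem \ref{thm:SuffRedundant2} and establishes the claim.

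The only delicate point, and the one where this proof departs from that of Proposition \ref{prop:suff1}, is the normalization choice: since $P'_{\ell_q/\ell_1}$ penalizes $\|\B[i]\c[i]\|_q$ rather than $\|\c[i]\|_q$, one does not need to pass through a full-column-rank submatrix $\bar\B[i]$, and the upper/lower norm control on the columns of both $\B_k$ and $\B_{\widehat k}$ is governed by the \emph{single} constant $\epsilon'_q$ from Definition \ref{def:normequiv}. This uniformity is what collapses the factor $\sqrt{(1+\sigma_q)/(1+\epsilon_q)}$ appearing in \eqref{eq:cumsubLqL1suff} down to $1$ here, and explains why condition \eqref{eq:cumsubLqL1suff-P} takes the cleaner form $\zeta_k+\zeta_{k-1}<(1-\epsilon'_q)/(1+\epsilon'_q)$.
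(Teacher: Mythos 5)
Your proof is correct and follows essentially the same route as the paper's: normalize the reconstructed vectors $\B[i]\c[i]$ so that Definition~\ref{def:normequiv} places both $\B_k$ and $\B_{\widehat k}$ in $\mathbb{B}_{\epsilon'_q}$, invoke Corollary~\ref{cor:uniqueness1} for full column rank, apply Lemma~\ref{lem:matrix} with $\alpha=\beta=\epsilon'_q$, and feed the resulting strict inequality into Theorem~\ref{thm:SuffRedundant2}. The only (immaterial) difference is that on the $\Lambda_k$ side you build $\B_k\a_k$ from an arbitrary feasible $\bar\c$ and close with the optimality of $\breve\c^*$, whereas the paper uses $\breve\c^*$ itself and gets equality $\sum_{i\in\Lambda_k}\|\B[i]\breve\c^*[i]\|_q=\|\a_k\|_1$ directly.
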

%
%
\begin{proof}
The proof is provided in the Appendix. 
\end{proof}
\smallskip
The following corollary derives stronger yet simpler to check sufficient conditions for block-sparse recovery using $P'_{\ell_q/\ell_1}$.
\vspace{1mm}
\begin{corollary}
\label{cor:suff2}
For signals that have a unique $k$-block-sparse representation in $\B$, the solution of the optimization program $P'_{\ell_q/\ell_1}$ is equivalent to that of $P'_{\ell_q/\ell_0}$, if \footnote{An intermediate sufficient condition is given by $u_k + u_{k-1} < \frac{1-\epsilon'_q}{1+\epsilon'_q}$ using the fact that $\zeta_k \leq u_k \leq k \mu_S$.} 
%
\begin{equation}
\label{eq:mutsubLqL1suff-P}
(2k-1) \mu_S < \frac{1-\epsilon'_q}{1+\epsilon'_q}.
\end{equation}
\end{corollary}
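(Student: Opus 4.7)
My plan is to derive this corollary directly from Proposition \ref{prop:suff2} by applying the bound of Lemma \ref{lem:cum-mut}. The proposition already establishes that the equivalence of $P'_{\ell_q/\ell_1}$ and $P'_{\ell_q/\ell_0}$ follows whenever $\zeta_k + \zeta_{k-1} < \frac{1-\epsilon'_q}{1+\epsilon'_q}$, so all that remains is to show that the stated mutual subspace coherence condition implies this cumulative coherence condition.

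First, I would invoke Lemma \ref{lem:cum-mut}, which gives $\zeta_j \leq j \mu_S$ for every $j \leq n$. Applying this bound with $j = k$ and $j = k-1$ yields
\begin{equation}
\zeta_k + \zeta_{k-1} \leq k \mu_S + (k-1) \mu_S = (2k-1) \mu_S.
\end{equation}
Thus, if the hypothesis $(2k-1) \mu_S < \frac{1-\epsilon'_q}{1+\epsilon'_q}$ of the corollary holds, chaining these inequalities gives
\begin{equation}
\zeta_k + \zeta_{k-1} \leq (2k-1) \mu_S < \frac{1-\epsilon'_q}{1+\epsilon'_q},
\end{equation}
which is precisely the hypothesis of Proposition \ref{prop:suff2}.

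Finally, I would appeal to Proposition \ref{prop:suff2} to conclude that, for every signal with a unique $k$-block-sparse representation in $\B$, the solution of $P'_{\ell_q/\ell_1}$ coincides with that of $P'_{\ell_q/\ell_0}$. Since the argument is a one-line substitution of the coherence bound from Lemma \ref{lem:cum-mut} into the already-proven proposition, no genuine obstacle arises; the only thing worth noting is that the resulting condition is strictly stronger than the one based on $\zeta_k$, which reflects the fact that cumulative subspace coherence is a sharper descriptor of the dictionary than mutual subspace coherence, as discussed after Lemma \ref{lem:cum-mut}.
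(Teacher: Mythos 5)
Your proof is correct and follows exactly the paper's route: apply Lemma \ref{lem:cum-mut} to bound $\zeta_k + \zeta_{k-1} \leq (2k-1)\mu_S$ and then invoke Proposition \ref{prop:suff2}. The paper states this in one line; you have merely spelled out the same substitution.
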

%
%
\begin{proof}
The result follows from Proposition \ref{prop:suff2} by using the fact that $\zeta_k \leq k \mu_S$ from Lemma \ref{lem:cum-mut}.
\end{proof}
\smallskip

Unlike the conditions for the equivalence between $P_{\ell_q/\ell_1}$ and $P_{\ell_q/\ell_0}$, which depend on whether the blocks are non-redundant or redundant, the conditions for the equivalence between $P'_{\ell_q/\ell_1}$ and $P'_{\ell_q/\ell_0}$ do not depend on the redundancy of the blocks. In addition, since $\epsilon'_2 = 0$, the condition for the equivalence between $P'_{\ell_2/\ell_1}$ and $P'_{\ell_2/\ell_0}$ based on the mutual subspace coherence reduces to
\begin{equation}
\label{eq:mutsubLqL1Psuff-nonred}
(2k-1) \mu_S < 1,
\end{equation}
and the condition based on the cumulative subspace coherence reduces to
\begin{equation}
\label{eq:cumsubLqL1Psuff-nonred}
\zeta_{k} + \zeta_{k-1} < 1.
\end{equation}
%
%
%
\begin{remark}
Note that the sufficient conditions in \eqref{eq:mutsubLqL1Psuff-nonred} and \eqref{eq:cumsubLqL1Psuff-nonred} are weaker than the sufficient conditions in \eqref{eq:mutsubLqL1suff-nonred} and \eqref{eq:cumsubLqL1suff-nonred}, respectively. While we can not assert the superiority of $P'_{\ell_2/\ell_1}$ over $P_{\ell_2/\ell_1}$, since the conditions are only sufficient not necessary, as we will show in the experimental results $P'_{\ell_2/\ell_1}$ is in general more successful than $P_{\ell_2/\ell_1}$ for block-sparse recovery.
\end{remark}
\vspace{1mm} 
\begin{remark}
Under the uniqueness assumption, both nonconvex programs $P_{\ell_q/\ell_0}$ and $P'_{\ell_q/\ell_0}$ find the unique blocks $\Lambda_k$ and the vectors $\{ \s_i \in \S_i \}_{i \in \Lambda_k}$ for which $\y = \sum_{i \in \Lambda_k}{\s_i}$. Thus, when the conditions for the success of the convex programs $P_{\ell_q/\ell_1}$ and $P'_{\ell_q/\ell_1}$ hold, their optimal solutions correspond to $\Lambda_k$ and $\{ \s_i \in \S_i \}_{i \in \Lambda_k}$. For non-redundant blocks, this implies that the optimal coefficient vectors found by $P_{\ell_q/\ell_1}$ and $P'_{\ell_q/\ell_1}$ are the same and equal to the true solution. 
\end{remark}

\section{Correcting Sparse Outlying Entries}
\label{sec:discussion}
In real-world problems, observed signals might be corrupted by errors \cite{Wright:PAMI09, Wright:TIT10}, hence might not perfectly lie in the range-space of a few blocks of the dictionary \cite{Elhamifar:CVPR11}. A case of interest, which also happens in practice, is when the observed signal is corrupted with an error that has a few outlying entries. For example, in the face recognition problem, a face image might be corrupted because of occlusions \cite{Wright:PAMI09}, or in the motion segmentation problem, some of the entries of feature trajectories might be corrupted due to objects partially occluding each other or malfunctioning of the tracker \cite{Rao:CVPR08, Elhamifar:CVPR09}. In such cases, the observed signal $\y$ can be modeled as a superposition of a pure signal $\y_0$ and a corruption term $\e$ of the form $\y = \y_0 + \e$, where $\y_0$ has a block sparse representation in the dictionary $\B$ and $\e$ has a few large nonzero entries. 
Thus, $\y$ can be written as 
\begin{equation}
\label{eq:corrupted}
\y = \y_0 + \e = \B \c + \e = \begin{bmatrix} \B & \I \end{bmatrix} \begin{bmatrix} \c \\ \e \end{bmatrix},
\end{equation}
where $\I$ denotes the identity matrix. Note that the new dictionary $\begin{bmatrix} \B & \I \end{bmatrix}$ has still a block structure whose blocks correspond to the blocks of $\B$ and the atoms of $\I$. Thus, in this new dictionary, $\y$ has a block-sparse representation with a few blocks corresponding to $\B$ and a few blocks/atoms corresponding to $\I$. Assuming that the sufficient conditions of the previous sections hold for the dictionary $\begin{bmatrix} \B & \I \end{bmatrix}$, we can recover a block-sparse representation of a corrupted signal using the convex optimization program $P_{\ell_q/\ell_1}$ as
\begin{equation}
P_{\ell_q/\ell_1}\!\!: \; \min \sum_{i=1}^{n}{ \| \c[i] \|_q } +
\| \e \|_1 \;\; \st \;\; \y = \begin{bmatrix} \B & \I
\end{bmatrix} \!\! \begin{bmatrix} \c \\ \e \end{bmatrix}\!,
\end{equation}
or using the convex optimization program $P'_{\ell_q/\ell_1}$ as
\begin{equation}
P'_{\ell_q/\ell_1}\!\!\!:  \, \min \!  \sum_{i=1}^{n}{ \!\| \B[i]
\c[i] \|_q } \!+\! \| \e \|_1 \; \st \; \y \!=\! 
\begin{bmatrix} \B &  \I \end{bmatrix} \!\! \begin{bmatrix} \c \\ \e
\end{bmatrix}\!\!.
\end{equation}
Here, we used the fact that the blocks of $\I$ are of length one, \ie, $\e[i] \in \Re$. Thus, $\sum_{i=1}^{D} \| \e[i] \|_q = \sum_{i=1}^{D} | \e[i] | = \| \e \|_1.$

As a result, this paper not only proposes two classes of convex programs that can be used to deal with block-sparse recovery of corrupted signals, but also provides theoretical guarantees under which one can successfully recover the block-sparse representation of a corrupted signal and eliminate the error from the signal.\footnote{Note that the result can be easily generalized to the case where the error $\e$ has a sparse representation in a dictionary $\G$ instead of $\I$ by considering the dictionary $\begin{bmatrix} \B & \G \end{bmatrix}$ in \eqref{eq:corrupted}.}
\begin{figure*}[t!]
\centering
\includegraphics[width=0.31\linewidth, trim = 34 19 77 10 , clip]{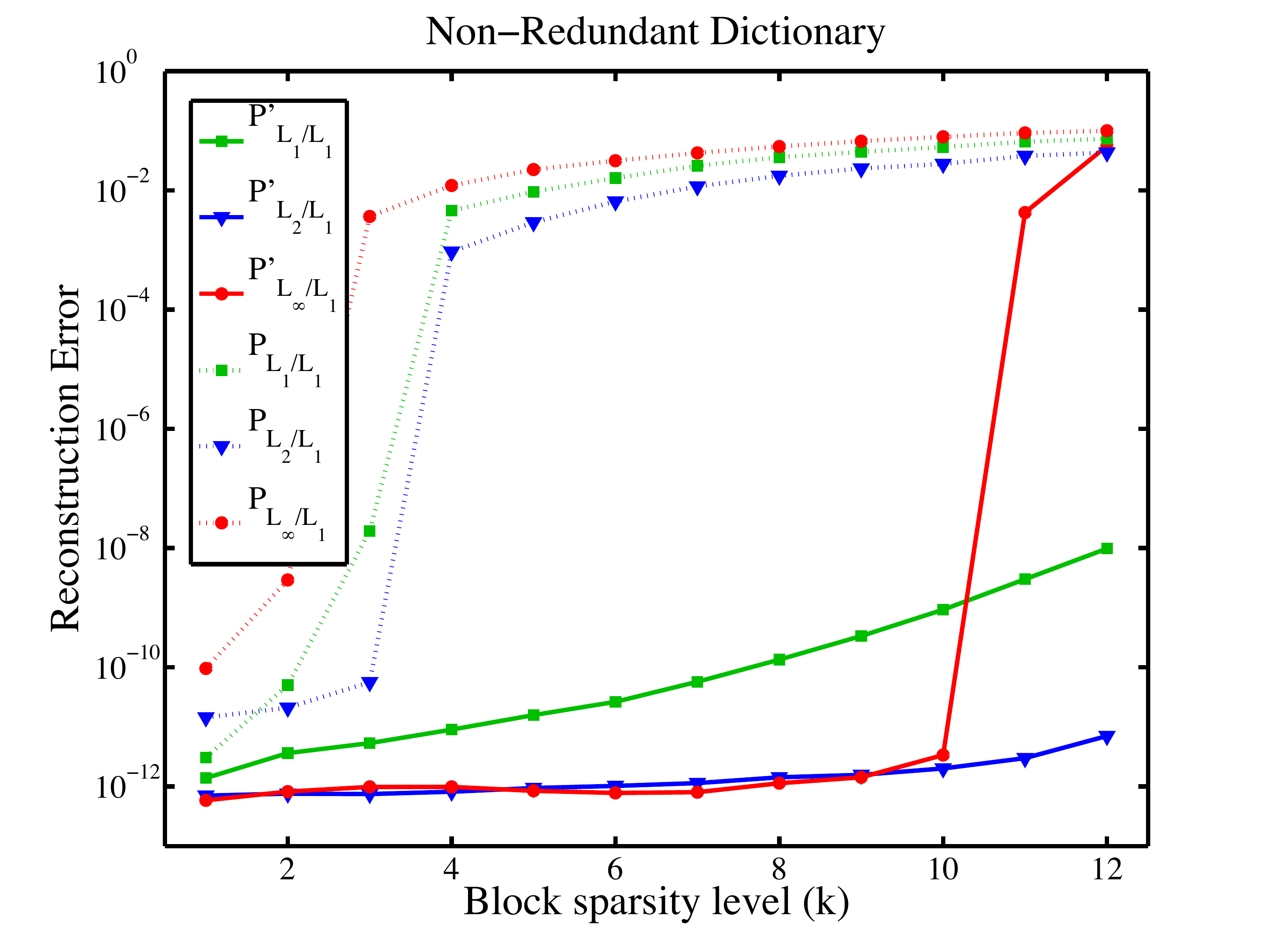} \
\includegraphics[width=0.31\linewidth, trim = 34 19 77 10 , clip]{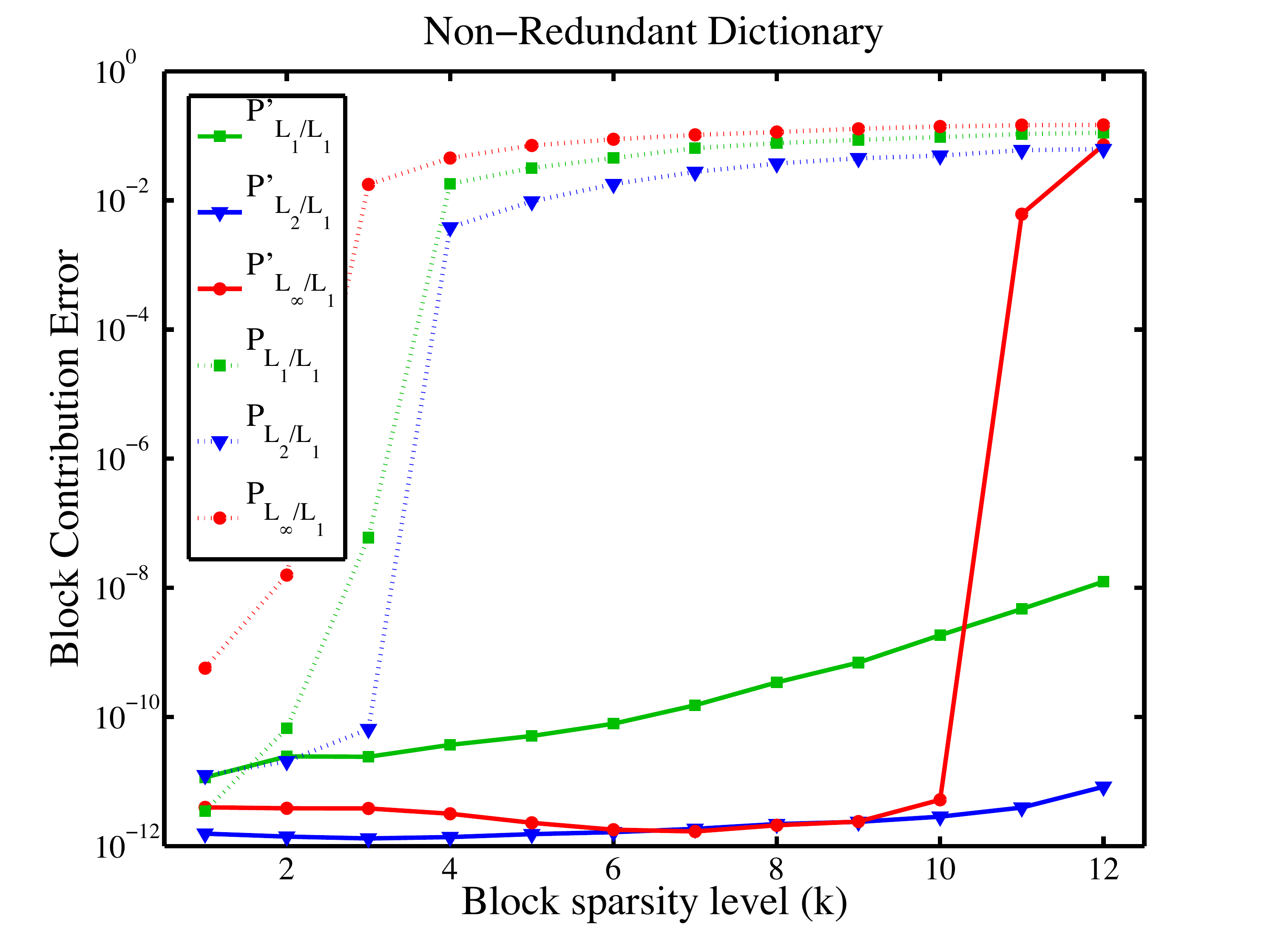} \
\includegraphics[width=0.31\linewidth, trim = 34 19 77 10 , clip]{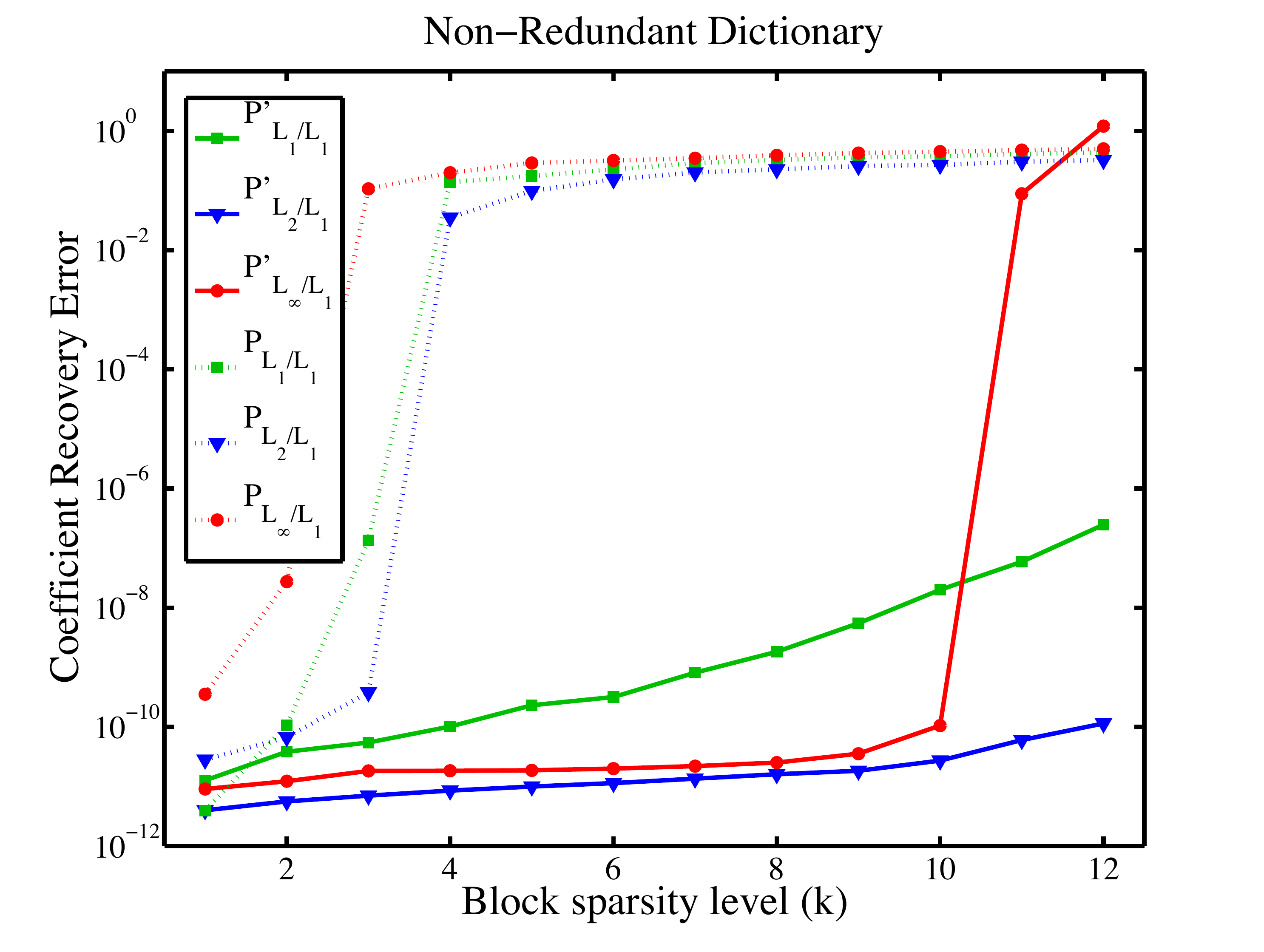}
\vspace{-2mm}
\caption{\footnotesize{Errors of the convex programs on synthetic data with $n=40$, $D = 100$. Reconstruction error (left), block-contribution error (middle) and coefficient recovery error (right) for non-redundant blocks with $m = d = 4$.}}
\label{fig:nonred}
\end{figure*}
\begin{figure*}[t!]
\centering
\includegraphics[width=0.31\linewidth, trim = 34 19 77 10 , clip]{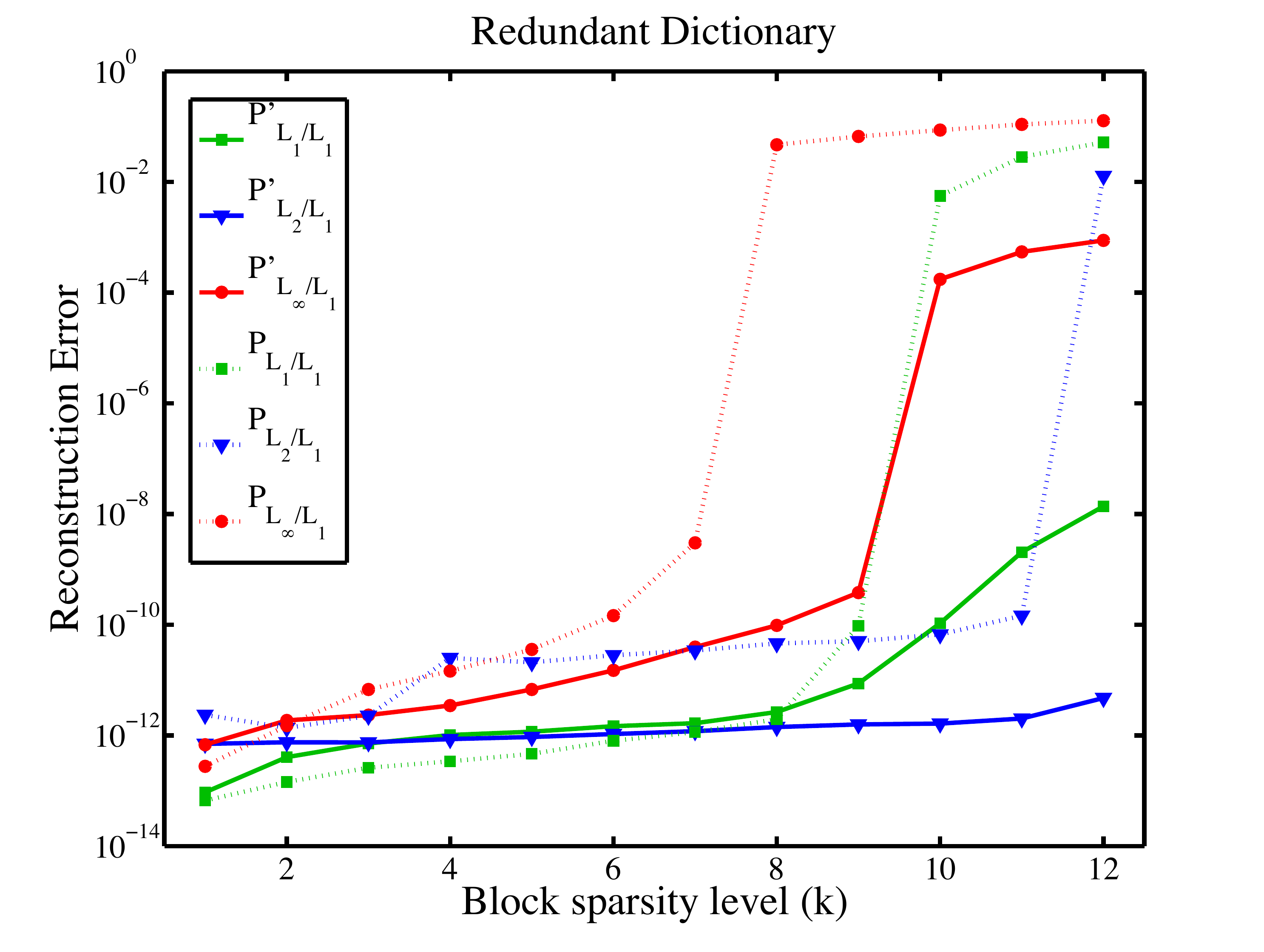} \ \hspace{5mm}
\includegraphics[width=0.31\linewidth, trim = 34 19 77 10 , clip]{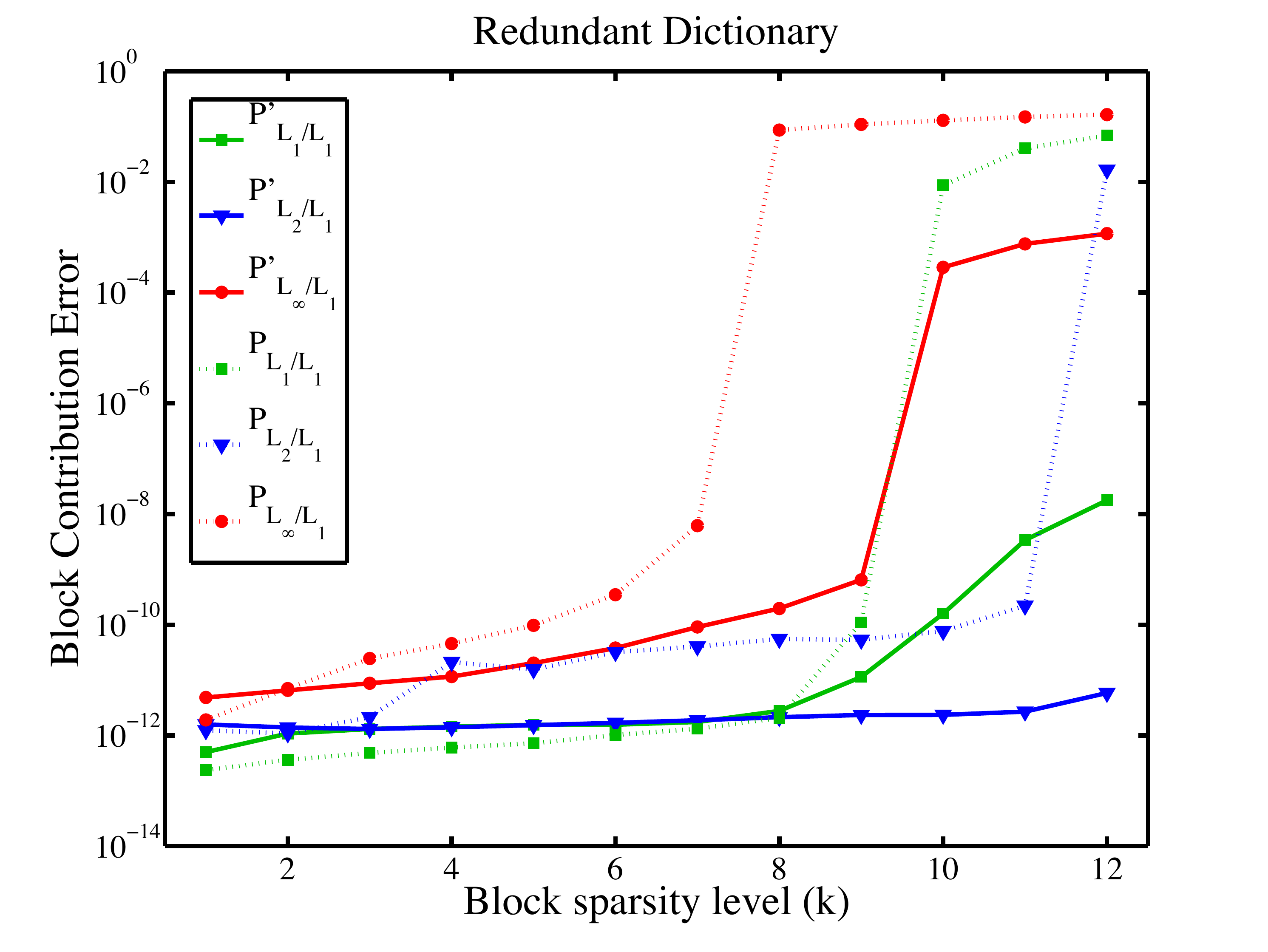}
\vspace{-2mm}
\caption{\footnotesize{Errors of the convex programs on synthetic data with $n=40$, $D = 100$. Reconstruction error (left) and block-contribution error (right) for redundant blocks with $m = 2d = 8$.}}
\label{fig:red}
\end{figure*}
%

\section{Experimental Results}
\label{sec:experiment}
In this section, we evaluate the performance of the two classes of convex programs for recovering block-sparse representations of signals.  
We evaluate the performance of the convex programs through synthetic experiments as well as real experiments on the face recognition problem.


\subsection{Synthetic Experiments}
We consider the problem of finding block-sparse representations of signals in dictionaries whose atoms are drawn from a union of disjoint subspaces. We investigate the performance of the two classes of convex programs for various block-sparsity levels. 

For simplicity, we assume that all the subspaces have the same dimension $d$ and that the blocks have the same length $m$. First, we generate random bases $\A_i \in \Re^{D \times d}$ for $n$ disjoint subspaces $\{\mathcal{S}_i\}_{i=1}^{n}$ in $\Re^D$ by orthonormalizing i.i.d. Gaussian matrices where the elements of each matrix are drawn independently from the standard Gaussian distribution.\footnote{In order to ensure that the generated bases correspond to disjoint subspaces, we check that each pair of bases must be full column-rank.} Next, using the subspace bases, we draw $m \in \{d, 2d\}$ random vectors in each subspace to form blocks $\B[i] \in \Re^{D \times m}$. 
For a fixed block-sprsity level $k$, we generate a signal $\y \in \Re^D$ using a random $k$-block-sparse vector $\c^0 \in \Re^{nm}$ where the $k$ nonzero blocks, $\Lambda_k$, are chosen uniformly at random from the $n$ blocks and the coefficients in the nonzero blocks are i.i.d. and drawn from the standard Gaussian distribution.

For each class of the convex programs $P_{\ell_q/\ell_1}$ and $P'_{\ell_q/\ell_1}$ with $q \in \{1,2,\infty\}$, we measure the following errors. The \emph{reconstruction error} measures how well a signal $\y$ can be reconstructed from the blocks of the optimal solution $\c^*$ corresponding to the correct support $\Lambda_k$ and is defined as
\begin{equation}
\label{eq:RecErr}
\text{reconstruction error} = \frac{ \| \y - \sum_{i \in \Lambda_k}{\B[i]} \c^*[i] \|_2} {\| \y \|_2}.
\end{equation}
Ideally, if an optimization algorithm is successful in recovering the correct vector in each subspace, \ie, $\B[i]\c^*[i] = \B[i]\c^0[i]$ for all $i$, then the reconstruction error is zero.  As we expect that the contribution of the blocks corresponding to $\Lambda_{\widehat{k}}$ to the reconstruction of the given signal be zero, \ie, $\B[i] \c^*[i] = 0$, we measure the \emph{block-contribution error} as
\begin{equation}
\label{eq:BlkErr}
\text{block contribution error} =  1 - \frac{\sum_{i \in \Lambda_k} \| \B[i] \c^*[i] \|_2}{\sum_{i=1}^{n} \| \B[i] \c^*[i] \|_2} \in [0,1].
\end{equation}
The error is equal to zero when all contributing blocks correspond to $\Lambda_k$ and it is equal to one when all contributing blocks correspond to $\Lambda_{\widehat{k}}$. For non-redundant blocks, since $\c^0$ is the unique $k$-block-sparse vector such that $\y = \B \c^0$, we can also measure the \emph{coefficient recovery error} as 
\begin{equation}
\label{eq:CofErr}
\text{coefficient recovery error} = \frac{\| \c^* - \c^0 \|_2} {\| \c^0 \|_2}.
\end{equation}
%

We generate $L_1 = 200$ different sets of $n = 40$ blocks in $\Re^{100}$ and for each set of $n$ blocks we generate $L_2 = 100$ different block-sparse signals. For a fixed block-sparsity level, we compute the average of the above errors for each optimization program over $L = L_1 \times L_2 = 20,000$ trials.\footnote{In order to solve the convex programs, we use the CVX package which can be downloaded from http://cvxr.com/cvx.}

Figure \ref{fig:nonred} shows the average errors for various block-sparsity levels for non-redundant blocks where $m = d = 4$. As the results show, for a fixed value of $q$, $P'_{\ell_q/\ell_1}$ obtains lower reconstruction, block-contribution, and coefficient recovery errors than $P_{\ell_q/\ell_1}$ for all block-sparsity levels. Moreover, while the performance of $P_{\ell_q/\ell_1}$ significantly degrades for block-sparsity levels greater than $3$, $P'_{\ell_q/\ell_1}$ maintains a high performance for a wider range of block-sparsity levels. 

Figure \ref{fig:red} shows the average errors for various block-sparsity levels for redundant blocks with $m = 2d = 8$. Similar to the previous case, for a fixed $q$, $P'_{\ell_q/\ell_1}$ has a higher performance than $P_{\ell_q/\ell_1}$ for all block-sparsity levels. Note that redundancy in the blocks improves the performance of $P_{\ell_q/\ell_1}$. Specifically, compared to the case of non-redundant blocks, the performance of $P_{\ell_q/\ell_1}$ degrades at higher sparsity levels. 

An interesting observation from the results of Figures \ref{fig:nonred} and \ref{fig:red} is that for each class of convex programs, the case of $q=\infty$ either has a lower performance or degrades at lower block-sparsity levels than $q = 1, 2$. In addition, the case of $q=2$ in general performs better than $q=1$.

\begin{figure*}[t!]
\centering
\hspace{0mm}
\includegraphics[width=0.0440\linewidth, trim = 68 30 68 20 , clip]{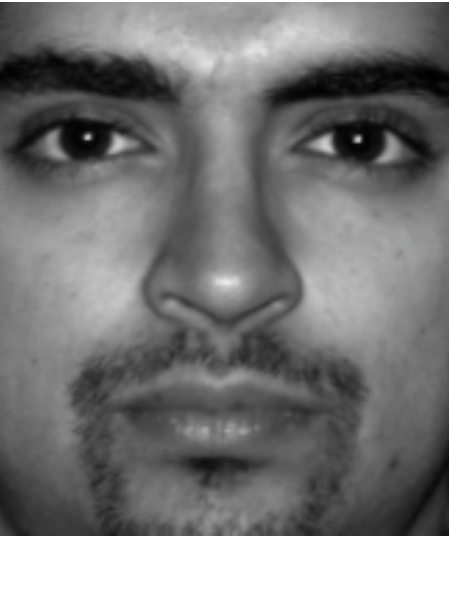}\
\includegraphics[width=0.0440\linewidth, trim = 68 30 68 20 , clip]{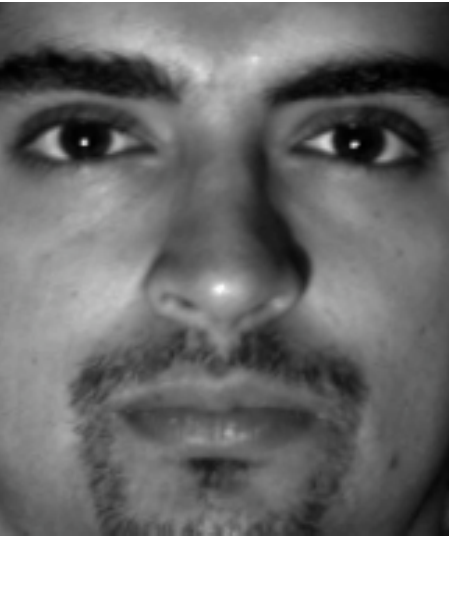}\
\includegraphics[width=0.0440\linewidth, trim = 68 30 68 20 , clip]{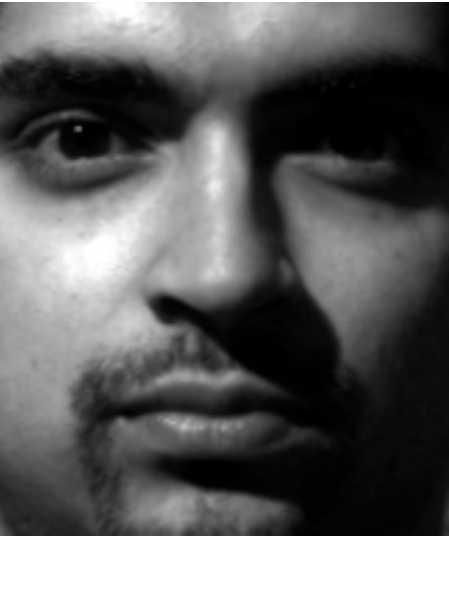}\
\includegraphics[width=0.0440\linewidth, trim = 68 30 68 20 , clip]{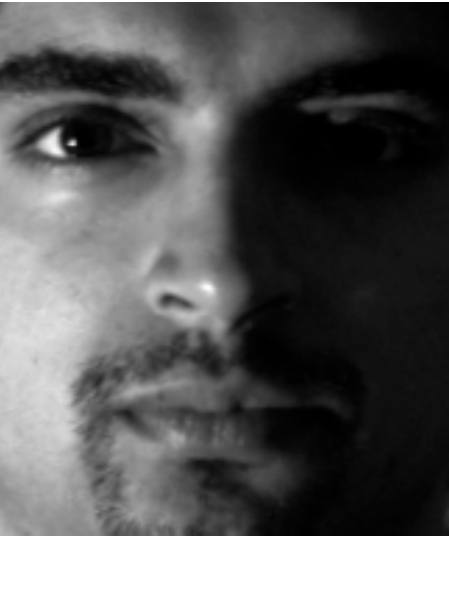}\
\includegraphics[width=0.0440\linewidth, trim = 68 30 68 20 , clip]{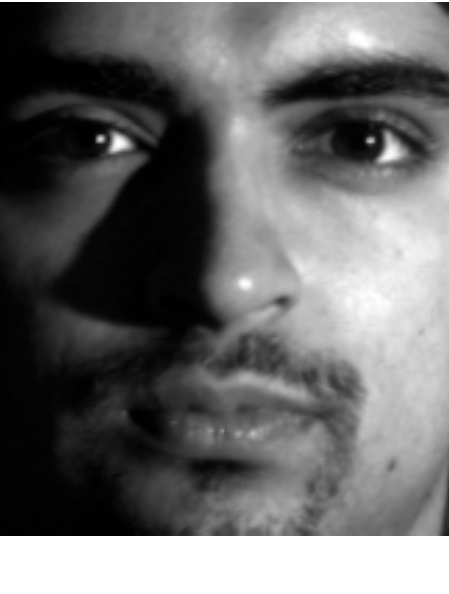}\
\includegraphics[width=0.0440\linewidth, trim = 68 30 68 20 , clip]{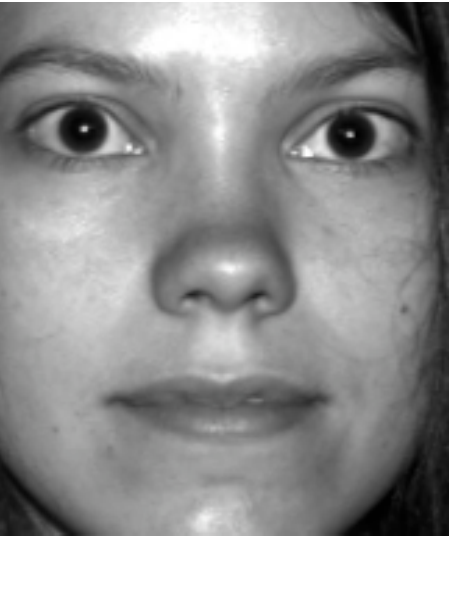}\
\includegraphics[width=0.0440\linewidth, trim = 68 30 68 20 , clip]{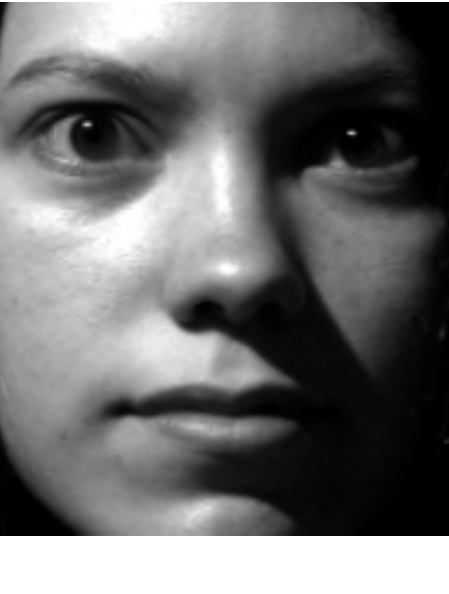}\
\includegraphics[width=0.0440\linewidth, trim = 68 30 68 20 , clip]{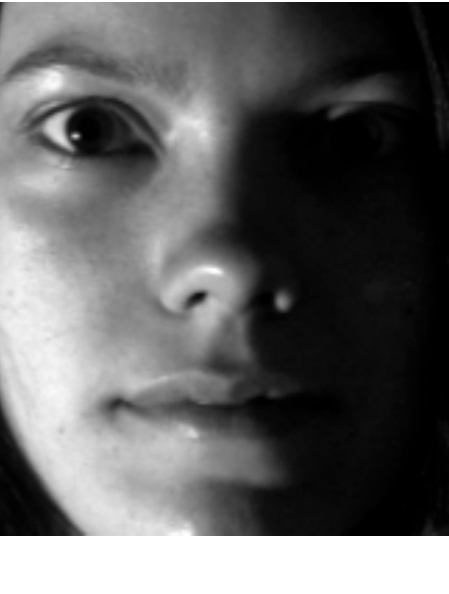}\
\includegraphics[width=0.0440\linewidth, trim = 68 30 68 20 , clip]{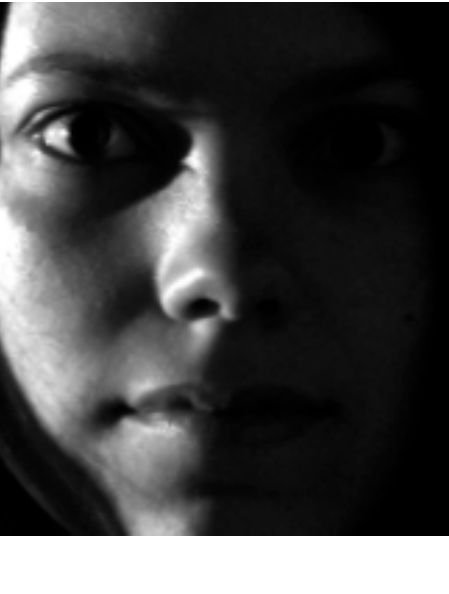}\
\includegraphics[width=0.0440\linewidth, trim = 68 30 68 20 , clip]{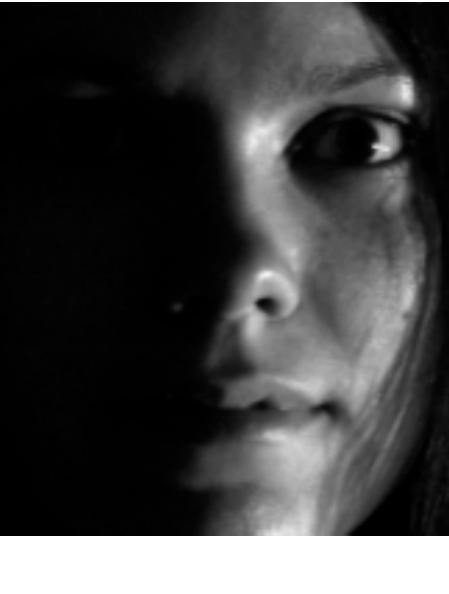}\
\includegraphics[width=0.0440\linewidth, trim = 68 30 68 20 , clip]{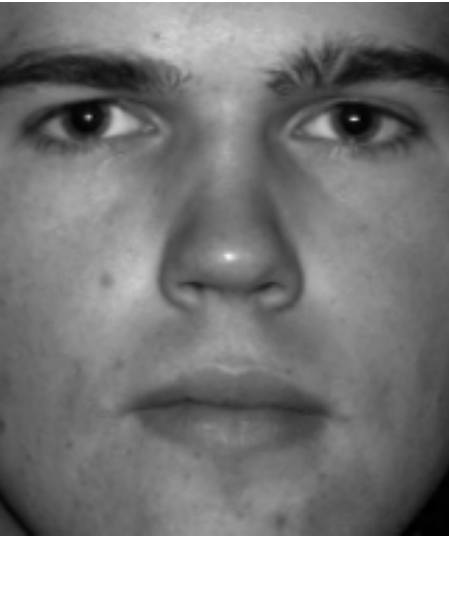}\
\includegraphics[width=0.0440\linewidth, trim = 68 30 68 20 , clip]{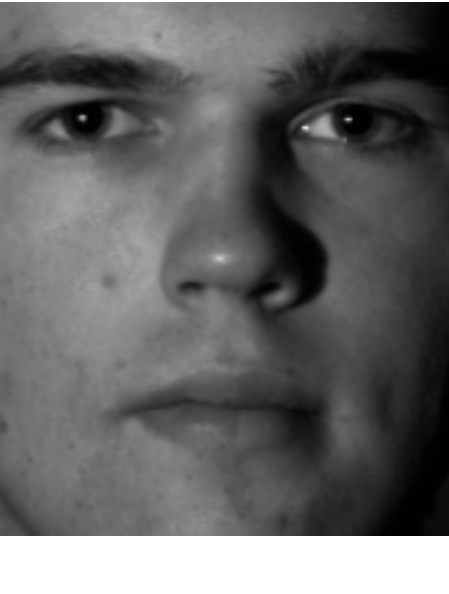}\
\includegraphics[width=0.0440\linewidth, trim = 68 30 68 20 , clip]{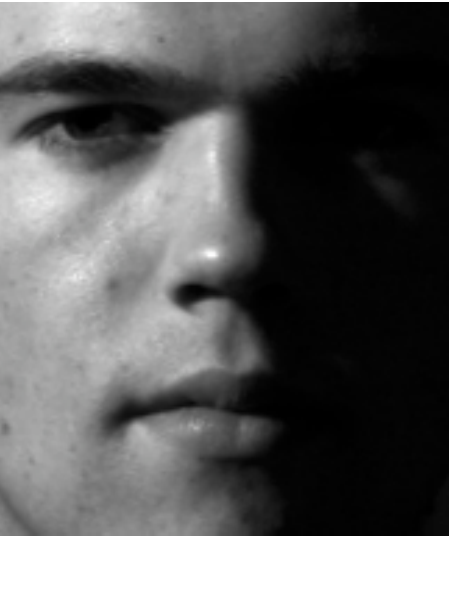}\
\includegraphics[width=0.0440\linewidth, trim = 68 30 68 20 , clip]{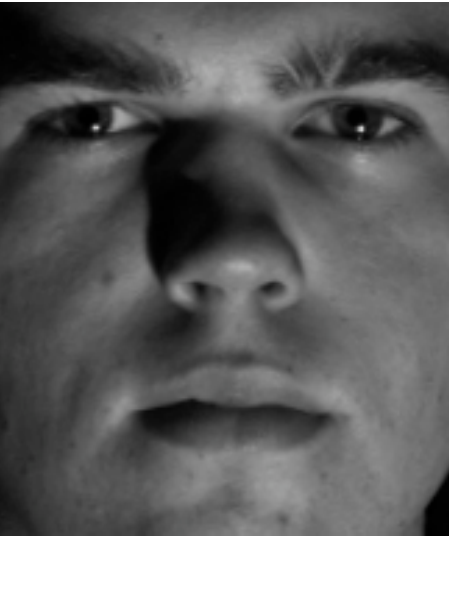}\
\includegraphics[width=0.0440\linewidth, trim = 68 30 68 20 , clip]{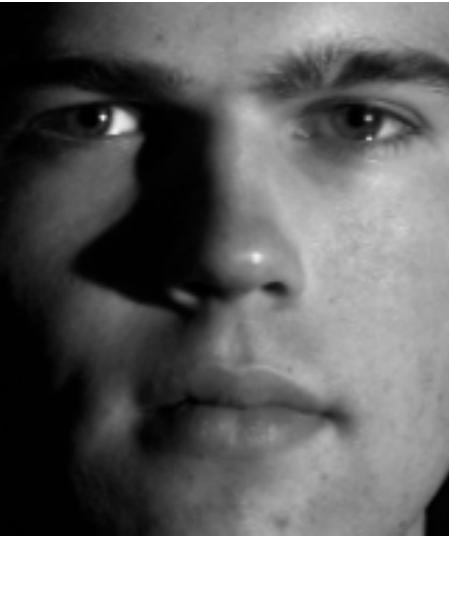}\
\includegraphics[width=0.0440\linewidth, trim = 68 30 68 20 , clip]{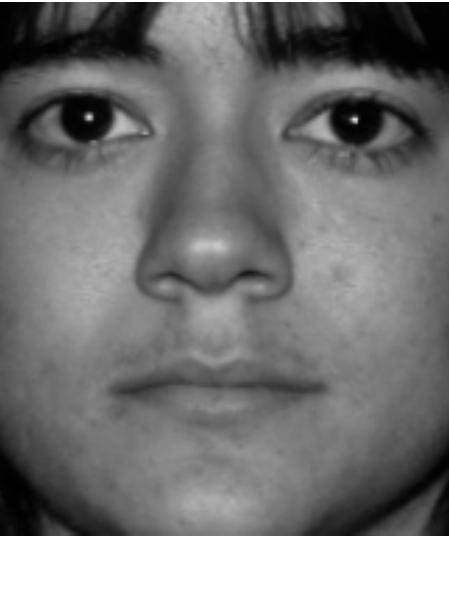}\
\includegraphics[width=0.0440\linewidth, trim = 68 30 68 20 , clip]{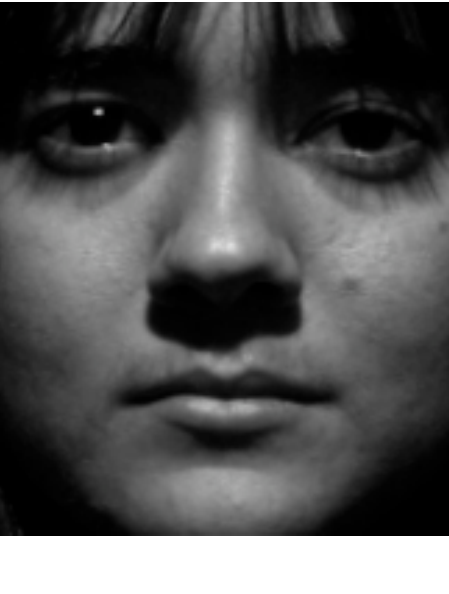}\
\includegraphics[width=0.0440\linewidth, trim = 68 30 68 20 , clip]{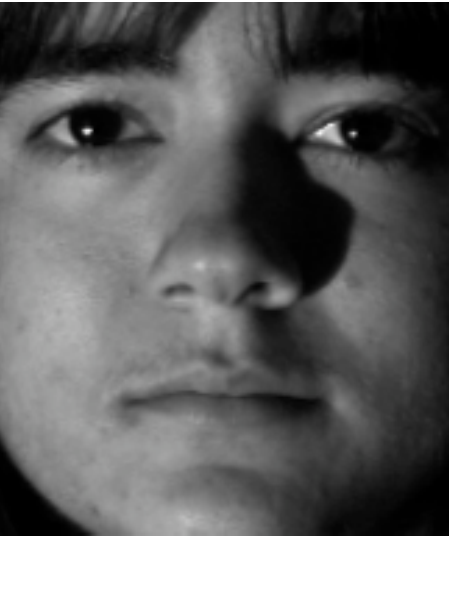}\
\includegraphics[width=0.0440\linewidth, trim = 68 30 68 20 , clip]{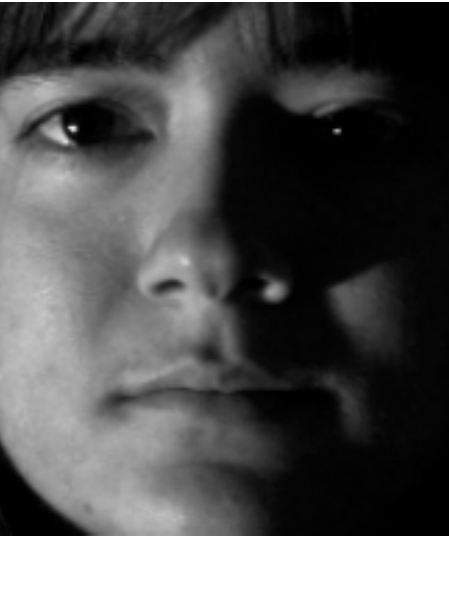}\
\includegraphics[width=0.0440\linewidth, trim = 68 30 68 20 , clip]{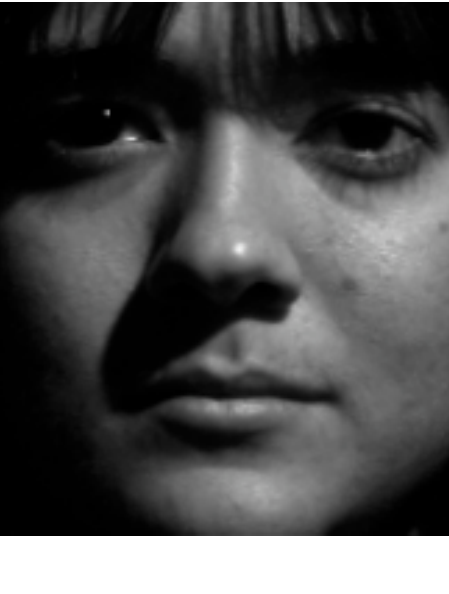}\\ \vspace{1mm}
\includegraphics[width=0.322\linewidth, trim = 24 8 45 3 , clip]{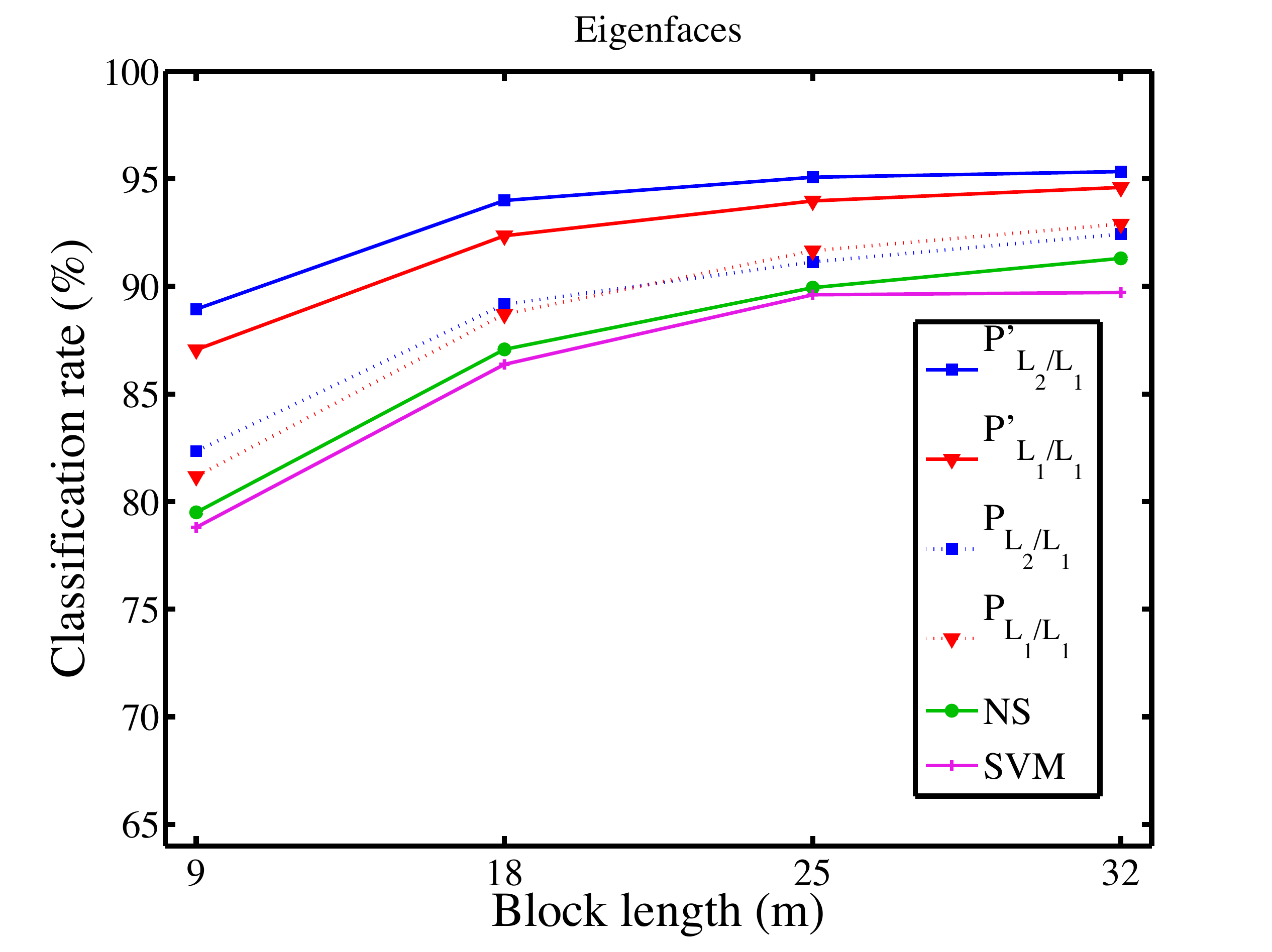} \
\includegraphics[width=0.322\linewidth, trim = 24 8 45 3 , clip]{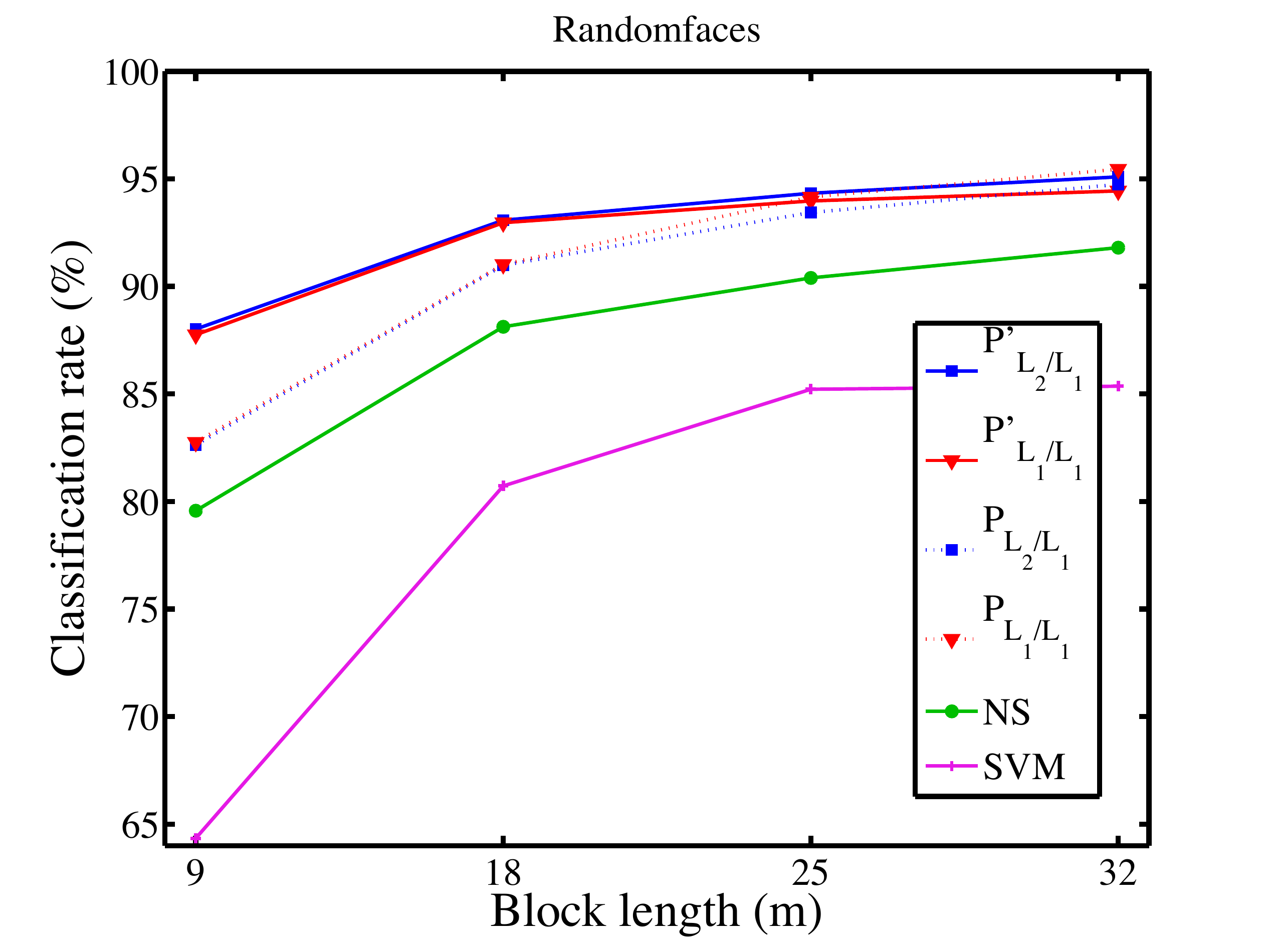} \
\includegraphics[width=0.322\linewidth, trim = 24 8 45 3 , clip]{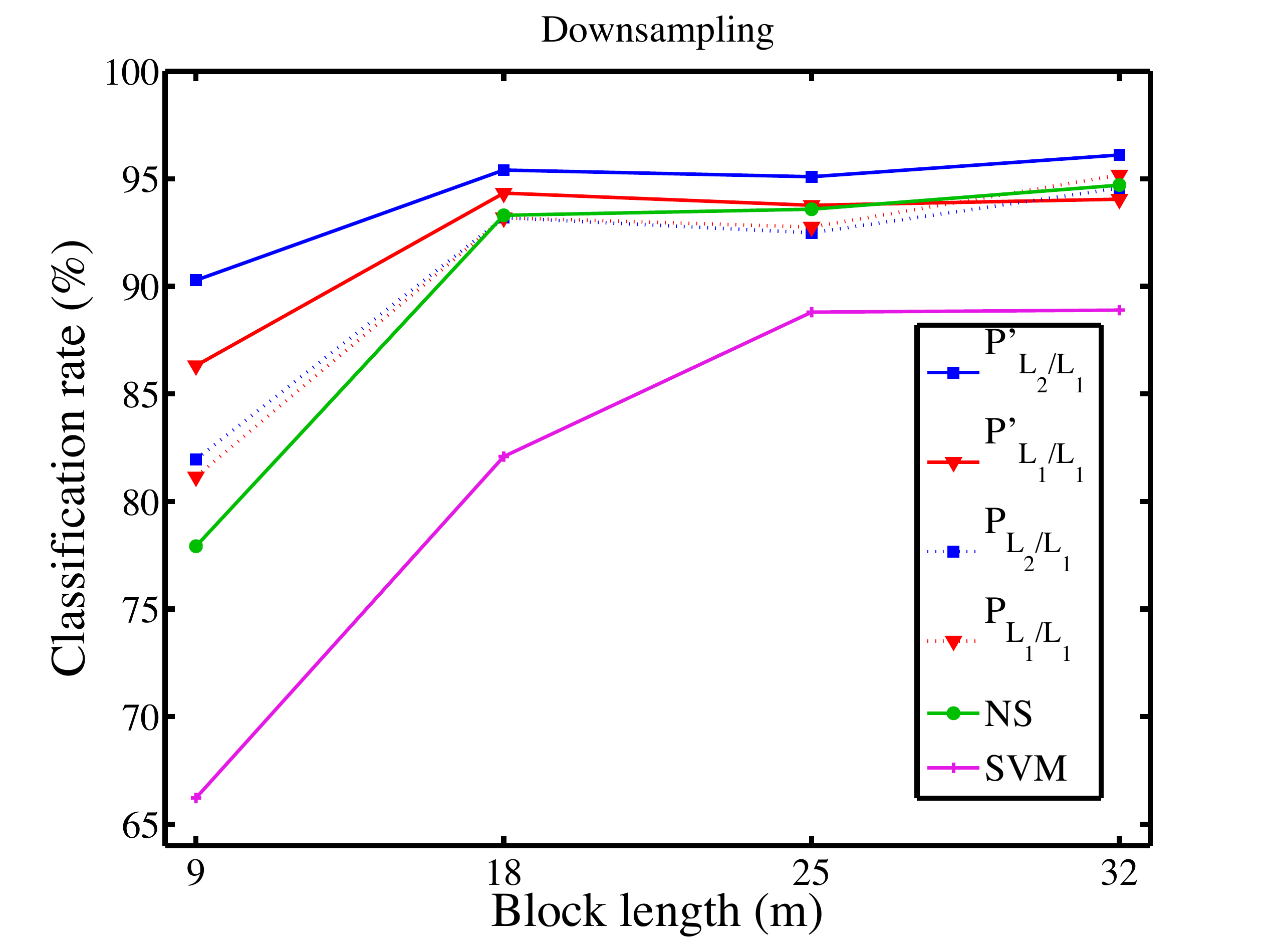} 
\vspace{-2mm} 
\caption{\footnotesize{Top: sample face images from four subjects in the Extended Yale B dataset. 
Bottom: classification rates for the convex programs on the Extended~Yale~B database with $n = 38$ and $D = 132$ as a function of the number of training data in each class. Left: using eigen-faces. Middle: using random projections. Right: using down-sampling.}} \label{fig:facerec-uncorr}
\end{figure*}
%



%

\subsection{Face Recognition}
In this part, we evaluate the performance of the block-sparse recovery algorithms in the real problem of automatic face recognition. Assume we are given a collection of $m n$ face images of $n$ subjects acquired under the same pose and varying illumination. Under the Lambertian assumption, \cite{Basri:PAMI03} shows that the face images of each subject live close to a linear subspace of dimension $d=9$. Thus, the collection of faces of different subjects live close to a union of $9$-dimensional subspaces. 
Let $\b_{ij} \in \Re^D$ denote the $j$-th training image for the $i$-th subject converted into a vector.
We denote the collection of $m$ faces for the $i$-th subject as 
\begin{equation}
\B[i] \triangleq \begin{bmatrix} \b_{i1} & \b_{i2} & \cdots & \b_{im} \end{bmatrix} \in \Re^{D \times m}.
\end{equation}
%
Thus, the dictionary $\B$ consists of the training images of the $n$ subjects. In this dictionary, a new face vector, $\y \in \Re^D$, which belongs to the $i$-th subject, can be written as a linear combination of face vectors from the $i$-th block. However, in reality, a face image is corrupted with cast shadows and specularities. In other words, the columns of $\B$ are corrupted by errors and do not perfectly lie in a low-dimensional subspace. Thus, in the optimization programs, instead of the exact equality constraint $\y = \B \c$, we use the constraint $\| \y - \B \c \|_2 \leq \delta$.\footnote{In all the experiments of this section, we set $\delta = 0.05$.} 
%
%
Following \cite{Wright:PAMI09}, we can find the subject to which $\y$ belongs from
\begin{equation}
\label{eq:faceidentity}
\text{identity}(\y) = \operatorname{arg}\!\min_{i} \, \| \y -  \B[i] \c^*[i] \|_2.
\end{equation}

We evaluate the performance of each one of the above optimization programs on the Extended~Yale~B~dataset \cite{Kriegman:PAMI05}, a few images of which are shown in Figure \ref{fig:facerec-uncorr}. The dataset consists of $2,414$ cropped frontal face images of $n = 38$ individuals. For each subject, there are approximately $64$ face images of size $192 \times 168 = 32,256$, which are captured under various laboratory-controlled lighting conditions. Since the dimension of the original face vectors is very large, we reduce the dimension of the data using the following methods:




\noindent\textbf{--} We use the eigenfaces approach \cite{Turk:CVPR91} by
projecting the face vectors to the first $D$ principal components of
the training data covariance matrix.

\noindent\textbf{--} We multiply the face vectors by a random projection
matrix $\Phi \in \Re^{D \times 32,256}$, which has i.i.d. entries
drawn from a zero mean Gaussian distribution with variance $1\over
D$  \cite{Baraniuk:CA08, Wright:PAMI09}.

\noindent\textbf{--} We down-sample the images by a factor $r$ such that the
dimension of the down-sampled face vectors is $D$.
%

%
%


In the experiments, we set $D = 132$. For each subject, we randomly select $m \in \{9, 18, 25, 32 \}$ training images, to form the blocks $\B[i] \in \Re^{D \times m}$ and use the remaining images for testing. For every test image, we solve each class of the convex programs for $q \in \{ 1, 2 \}$ and determine the identity of the test image using \eqref{eq:faceidentity}.\footnote{Similar to the synthetic experiments, the case of $q = \infty$ has lower performance than other values of $q$, hence we only report the results for $q = 1, 2.$} 
We compute the classification rate as the average number of correctly classified test images for which the recovered identity matches the ground-truth. We repeat this experiment $L=20$ times for random choices of $m$ training data for each subject and compute the mean classification rate among all the trials. 
We compare our results with the nearest subspace (NS) method \cite{Ho:CVPR03} as well as the Linear SVM classifier \cite{Duda:04}.


The recognition results for three dimensionality reduction methods are shown in Figure \ref{fig:facerec-uncorr}. As the results show, the NS and SVM methods have lower performance than methods based on sparse representation. This comes from the fact that the linear SVM assumes that the data in different classes are linearly separable while the face images have a multi-subspace structure, hence are not necessarily separable by a hyperplane. In the case of the NS method, subspaces associated to different classes are close to each other, \ie, have a small principal angle \cite{Elhamifar:TPAMI12}. Since the test images are corrupted by errors, they can be close to the intersection of several subspaces, resulting in incorrect recognition. In addition, using the underlying subspaces ignores the distribution of the data inside the subspaces as opposed to the sparsity-based methods that directly use the training data. On the other hand, for a fixed value of $q$, the convex program $P'_{\ell_q/\ell_1}$ almost always outperforms $P_{\ell_q/\ell_1}$. While the performances of different methods are close for a large number of training data in each class, the difference in their performances becomes evident when the number of data in each class decreases. More specifically, while the performance of all the algorithms degrade by decreasing the number of data in each class, the convex programs $P'_{\ell_q/\ell_1}$ are more robust to decreasing the number of training data. In other words, when the number of training data in each class is as small as the dimension of the face subspace, \ie, $m = d = 9$, $P'_{\ell_q/\ell_1}$ has $5\%$ to $10\%$ higher recognition rate than $P_{\ell_q/\ell_1}$. This result is similar to the result of synthetic experiments, where we showed that the gap between the performance of the two classes of convex programs is wider for non-redundant blocks than redundant blocks. It is also important to note that the results are independent of the choice of the features, \ie, they follow the same pattern for the three types of features as shown in Figure \ref{fig:facerec-uncorr}. In all of them $P'_{\ell_2/\ell_1}$ and $P'_{\ell_1/\ell_1}$ achieve the best recognition results (see \cite{Elhamifar:CVPR11} for experimental results on data with corruption, occlusion, and disguise).

\section{Conclusions}
\label{sec:conc}
We considered the problem of block-sparse recovery using two classes of convex programs, $P_{\ell_q/\ell_1}$ and $P'_{\ell_q/\ell_1}$ and, under a unified framework, we analyzed the recovery performance of each class of convex programs for both non-redundant and redundant blocks. 
%
Interesting avenues of further research include analysis of the stability of each convex program family in the presence of noise in the measured signal as well as generalizing our results to find recovery guarantees for mixed $\ell_q/\ell_p$-norm algorithms \cite{Kowalski:ACHA09}. 
Investigating necessary and sufficient conditions based on the statistical analysis of the projected polytopes \cite{DonohoTanner:AMS09} via the mixed $\ell_q/\ell_1$-norms would also be of great importance. In particular, a geometrical study of the convex programs $P_{\ell_q/\ell_1}$ and $P'_{\ell_q/\ell_1}$ will help in a better understanding of the differences in their block-sparse recovery performance. For dictionaries whose blocks are drawn from certain distributions, the probabilistic analysis of meeting the conditions of Propositions \ref{prop:suff1} and \ref{prop:suff2} as well as Corollaries \ref{cor:suff1} and \ref{cor:suff2} will be the subject of future work. Finally, while there has been a lot of work for fast and efficiently solving the $P_{\ell_q/\ell_1}$ convex program family \cite{Wright:TSP09, Ewout:SJO11}, extending such results to the $P'_{\ell_q/\ell_1}$ family and its unconstrained Lasso-type variations is an interesting open avenue for further research. 


%
\section*{Acknowledgment}
\vspace{0mm}
{This work was partially supported by grants NSF CNS-0931805, NSF ECCS-0941463, NSF OIA-0941362, and ONR N00014-09-10839.}

\section*{Appendix}
\label{sec:appendix}

\subsection{Proof of Proposition \ref{lem:cum-mut}}
\label{app:lemcummut}
\noindent Let $\Lambda_k \triangleq \{j_1^*, \ldots, j_k^* \}$ and $i^* \notin \Lambda_k$ be the set of indices for which $\zeta_k$ is obtained, \ie,
%
\begin{equation}
\zeta_k = \max_{\Lambda_k} \; \max_{i \notin \Lambda_k} \; \sum_{j \in \Lambda_k}{\mu(\S_i,\S_j)} = \sum_{l = 1}^{k}{\mu(\S_{i^*},\S_{j_l^*})}.
\end{equation}
Denoting the sorted subspace coherences among all pairs of different subspaces by $\mu_S = \mu_{1} \geq \mu_{2} \geq \cdots$, 
%
we have
%
\begin{equation}
\zeta_k = \sum_{l = 1}^{k}{\mu(\S_{i^*},\S_{j_l^*})} \leq u_k = \sum_{l=1}^{k}{\mu_l} \leq k \mu_S,
\end{equation}
which proves the desired result.

\vspace{-1mm}
\subsection{Proof of Proposition \ref{prop:uniqueness1} }
\label{app:uniquenessprop}
\noindent We prove this result using contradiction. 

\noindent $(\Longrightarrow)$ Assume there exists a $2k$-block-sparse vector $\bar{\c} \neq 0$ such that $\bar{\B} \, \bar{\c} = 0$. We can write $\bar{\c}^{\top} = \begin{bmatrix} \bar{\c}_1^{\top} & \bar{\c}_2^{\top} \end{bmatrix}$ where $\bar{\c}_1$ and $\bar{\c}_2$ are $k$-block-sparse vectors. So, we have
\begin{equation}
\bar{\B} \, \bar{\c} =  \begin{bmatrix} \bar{\B}_1 & \bar{\B}_2 \end{bmatrix} \!\! \begin{bmatrix} \bar{\c}_1 \\ \bar{\c}_2 \end{bmatrix} = 0 \! \implies \!  \bar{\y} \triangleq \bar{\B}_1 \bar{\c}_1 \!=\! - \bar{\B}_2 \bar{\c}_2.
\end{equation}
Thus, there exists a vector $\bar{\y}$ that has two $k$-block-sparse representations in $\B$ using different sets of blocks. This contradicts the uniqueness assumption of the proposition.
%
%

\noindent
$(\Longleftarrow)$ Assume there exists a vector $\y$ that has two different $k$-block-sparse representations using $(\{i_l\},\{\s_{i_l}\}) \neq (\{i'_l\},\{\s'_{i_l}\})$. 
Since for each block of $\bar{\B}$, we have $\rank(\bar{\B}[i]) = \rank(\B[i])$, there exist $\bar{\c}_1$ and $\bar{\c}_2$ such that $\y = \bar{\B} \, \bar{\c}_1 = \bar{\B} \, \bar{\c}_2,$ where $\bar{\c}_1$ and $\bar{\c}_2$ are different $k$-block-sparse with the indices of their nonzero blocks being $\{i_l\}$ and $\{i'_l\}$, respectively. Also, $\bar{\B}[i_l] \bar{\c}_1[i_l] = \s_{i_l}$ and $\bar{\B}[i_l] \bar{\c}_2[i_l] = \s'_{i_l}$. Thus, we have $\bar{\B} \, (\bar{\c}_1 - \bar{\c}_2) = 0$ that contradicts the assumption of the proposition since $\bar{\c}_1 - \bar{\c}_2$ is a $2k$-block-sparse vector.
%
%

\vspace{0mm}
\subsection{Proof of Corollary \ref{cor:uniqueness1} }
\label{app:uniquenesscor}
%
\noindent We prove the result using contradiction. 

\noindent $(\Longrightarrow)$ Assume there exists $\B_n \in \mathbb{B}_{\tau}(\Lambda_n)$ such that $\rank(\B_n) < 2k$. So, there exists a $2k$-sparse vector $\c_n^{\top} \triangleq \begin{bmatrix} c_n^1 \!& \cdots \!& c_n^n \end{bmatrix}$ such that $\B_n \c_n = \sum_{i=1}^{n}{c_n^i \s_i} = 0$, where $\s_i \in \mathbb{W}_{\tau, i}$ is the $i$-th column of $\B_n$. For each  full column-rank submatrix of $\B[i]$, denoted by $\bar{\B}[i] \in \Re^{D \times d_i}$, there exists a unique $\bar{\c}[i]$ such that $\bar{\B}[i] \bar{\c}[i] = c_n^i \s_i$. Thus, $\bar{\B} \, \bar{\c} = 0$, where $\bar{\B}$ is defined in \eqref{eq:Bbar} and $\bar{\c}^{\top} \triangleq \begin{bmatrix} \bar{\c}[1]^{\top} \!\! & \!\! \cdots & \bar{\c}[n]^{\top} \end{bmatrix}$ is a $2k$-block-sparse vector. This, contradicts the uniqueness assumption using Proposition \ref{prop:uniqueness1}. 

\noindent $(\Longleftarrow)$ Now, assume there exists a signal $\y$ that has two different $k$-block-sparse representations in $\B$. 
From Proposition \ref{prop:uniqueness1}, there exists a $2k$-block-sparse vector $\bar{\c} \neq 0$ such that $\bar{\B} \, \bar{\c} = 0$. We can rewrite $\bar{\B}[i] \, \bar{\c}[i] = c_n^i \s_i$, where $\s_i \in \mathbb{W}_{\tau, i}$. Thus, we have $\bar{\B} \, \bar{\c} = \B_n \c_n = 0$, where $\B_n \triangleq \begin{bmatrix} \s_1 \!& \cdots \!& \s_n \end{bmatrix} \in \mathbb{B}_{\tau}(\Lambda_n)$ and $\c_n^{\top} \triangleq \begin{bmatrix} c_n^1 \!& \cdots \!& c_n^n \end{bmatrix}$ is a $2k$-sparse vector. This implies $\rank(\B_n) < 2k$ that contradicts the assumption.
\vspace{0mm}
\subsection{Proof of Lemma \ref{lem:matrix} }
\label{app:lemmatrix}
\noindent The idea of the proof follows the approach of Theorem 3.5 in \cite{Tropp:TIT04}. Let $\E_k  = \begin{bmatrix} \e_{i_1} \!\!& \cdots \!\! & \e_{i_k} \end{bmatrix} \in \mathbb{B}_{\alpha}(\Lambda_k)$ and $\E_{\widehat{k}} = \begin{bmatrix} \e_{i_{k+1}} \!\!& \cdots \!\! & \e_{i_n} \end{bmatrix}$ where $\| \e_{i_l} \|_2 \leq \sqrt{1+\beta}$ for every $i_l \in \Lambda_{\widehat{k}}$. Using matrix norm properties, we have
\begin{equation}
\label{eq:interm111}
 \| (\E_k^{\top} \E_k)^{-1}\E_k^{\top} \E_{\widehat{k}} \|_{1,1} \leq   \| (\E_k^{\top} \E_k)^{-1} \|_{1,1} \| \E_k^{\top} \E_{\widehat{k}} \|_{1,1}.
\end{equation}
We can write $\E_k^{\top} \E_k = \I_k +  \D$, where
\begin{equation}
\D \triangleq \begin{bmatrix} \e_{i_1}^{\top} \e_{i_1} -1 & \cdots & \e_{i_1}^{\top} \e_{i_k} \\ \vdots & \ddots & \vdots \\ \e_{i_k}^{\top} \e_{i_1} & \cdots & \e_{i_k}^{\top} \e_{i_k}-1  \end{bmatrix}. 
\end{equation}
Since $\E_k \in \mathbb{B}_{\alpha}(\Lambda_k)$, for any column of $\E_k$, we have $\| \e_i \|_2^2 \leq 1+\alpha$. Also, for any two columns $\e_i$ and $\e_j$ of $\E_k$ we have
\begin{equation}
| \e_i^{\top} \e_j | \leq \| \e_i \|_2 \| \e_j \|_2 \, \mu(\mathcal{S}_i,\mathcal{S}_j) \leq (1+\alpha) \, \mu(\mathcal{S}_i,\mathcal{S}_j).
\end{equation}
%
Thus, we can write 
\begin{equation}
\| \D \|_{1,1} \leq \alpha + (1+\alpha) \, \zeta_{k-1}.
\end{equation}
If $\| \D \|_{1,1} < 1$, we can write $(\E_k^{\top} \E_k)^{-1} = (\I_k + \D)^{-1} = \sum_{i=0}^{\infty}{ (-\D)^k / k! }\,$ from which we obtain
\begin{multline}
\label{eq:suffstr3}
\| ( \E_k^{\top} \E_k )^{-1} \|_{1,1} \leq \sum_{i=0}^{\infty}{ \frac{\| \D \|_{1,1}^k}{k!} } = \frac{1}{1-\| \D \|_{1,1}} \\\leq \frac{1}{ 1 - [ \,\alpha + (1+\alpha) \zeta_{k-1} \, ] }.
\end{multline}
On the other hand, $\E_k^{\top} \E_{\widehat{k}}$ has the following form 
\begin{equation}
\E_k^{\top} \E_{\widehat{k}} = \begin{bmatrix} \e_{i_1}^{\top} \e_{i_{k+1}} & \cdots & \e_{i_1}^{\top} \e_{i_n} \\ \vdots & \ddots & \vdots \\ \e_{i_k}^{\top} \e_{i_{k+1}} & \cdots & \e_{i_k}^{\top} \e_{i_n}  \end{bmatrix}. 
\end{equation}
Since for each column $\e_i$ of the matrix $\E_k$ we have $\| \e_i \|_2^2 \leq 1+\alpha$ and for each column $\e_j$ of the matrix $\E_{\widehat{k}}$ we have $\| \e_j \|_2^2 \leq 1+\beta$, we obtain 
\begin{equation}
\label{eq:suffstr2}
\| \E_k^{\top} \E_{\widehat{k}}  \|_{1,1} \leq \sqrt{(1+\alpha) (1+\beta)} \; \zeta_k.
\end{equation}
Finally, substituting \eqref{eq:suffstr3} and \eqref{eq:suffstr2} into \eqref{eq:interm111}, we get
\begin{multline}
 \| (\E_k^{\top} \E_k)^{-1} \E_k^{\top} \E_{\widehat{k}} \|_{1,1} \leq   \| (\E_k^{\top} \E_k)^{-1} \|_{1,1} \| \E_k^{\top} \E_{\widehat{k}} \|_{1,1} \\ \leq \frac{ \sqrt{(1+\alpha) (1+\beta)} \, \zeta_{k} }{ 1 - [\, \alpha + (1+\alpha) \zeta_{k-1} \, ] }.
\end{multline}
%

%

\subsection{Proof of Proposition \ref{prop:suff2}}
\label{app:propsuff2}
\noindent Fix a set $\Lambda_k = \{i_1,\ldots,i_k\}$ of $k$ indices from $\{1, \ldots, n\}$ and denote by $\Lambda_{\widehat{k}} = \{i_{k+1},\ldots,i_n\}$ the set of the remaining indices. For a signal $\x$ in the intersection of $\oplus_{i \in \Lambda_k} \mathcal{S}_i$ and $\oplus_{i \in \Lambda_{\widehat{k}}} \mathcal{S}_i$, let $\breve{\c}^*$ be the solution of the optimization program \eqref{eq:L2L1red3}. We can write 
\begin{equation}
\label{eq:op1p}
\x = \sum_{i \in \Lambda_k} \B[i] \c^*[i] = \B_k \a_k,
\end{equation}
where $\B_k \triangleq \begin{bmatrix} \s_{i_1} \!\!& \ldots \!\!& \s_{i_k} \end{bmatrix}$ and $\a_k \triangleq \begin{bmatrix} a_{i_1} \!\!& \ldots \!\!& a_{i_k} \end{bmatrix}^{\top}$ are defined as follows. For every $i \in \Lambda_k$, if $\breve{\c}^*[i] \neq 0$ and $\B[i] \breve{\c}^*[i] \neq 0$, define 
\begin{equation}
\s_i \triangleq \frac{\B[i] \breve{\c}^*[i]}{ \| \B[i]  \breve{\c}^*[i] \|_q }, \quad a_i \triangleq \| \B[i] \breve{\c}^*[i] \|_q. 
\end{equation}
Otherwise, let $\s_i$ be an arbitrary vector in $\mathcal{S}_i$ of unit Euclidean norm and $a_i = 0$. According to Definition \ref{def:normequiv}, we have $\B_k \in \mathbb{B}_{\epsilon'_q}(\Lambda_k)$.

Now, let $\widehat{\c}^*$ be the solution of the optimization program \eqref{eq:L2L1red4}. We can write
\begin{equation}
\label{eq:op2p}
\x = \sum_{i \in \Lambda_{\widehat{k}}} \B[i] \widehat{\c}^*[i] = \B_{\widehat{k}} \a_{\widehat{k}},
\end{equation}
where $\B_{\widehat{k}} \triangleq \begin{bmatrix} \s_{i_{k+1}} \!\!& \ldots \!\!& \s_{i_n} \end{bmatrix}$ and $\a_{\widehat{k}} \triangleq \begin{bmatrix} a_{i_{k+1}} \!\!& \ldots \!\!& a_{i_n} \end{bmatrix}^{\top}$ are defined in the following way. For every $i \in \Lambda_{\widehat{k}}$, if $\widehat{\c}^*[i] \neq 0$ and $\B[i] \widehat{\c}^*[i] \neq 0$, define 
\begin{equation}
\s_i \triangleq \frac{\B[i] \widehat{\c}^*[i]}{ \| \B[i] \widehat{\c}^*[i] \|_q }, \quad a_i \triangleq \| \B[i] \widehat{\c}^*[i] \|_q. 
\end{equation}
Otherwise, let $\s_i$ be an arbitrary vector in $\mathcal{S}_i$ of unit Euclidean norm and $a_i = 0$. 
%
%
Note that from Definition \ref{def:normequiv}, we have $\B_{\widehat{k}} \in \mathbb{B}_{\epsilon'_q}(\Lambda_{\widehat{k}})$.

Since $\B_k \in \mathbb{B}_{\epsilon'_q}(\Lambda_k)$, assuming $\epsilon'_q \in [0,1)$, the matrix $\B_k$ is full column-rank from Corollary \ref{cor:uniqueness1}. Hence, we have $\, \a_k = (\B_k^{\top} \B_k)^{-1}\B_k^{\top} \y \,$ and consequently,
\begin{equation}
\| \a_k \|_1 = \| (\B_k^{\top} \B_k)^{-1} \B_k^{\top} \x \|_1.
\end{equation}
Substituting $\x$ from \eqref{eq:op2p} in the above equation, we obtain
\begin{multline}
\label{eq:interm1p}
\| \a_k \|_1 = \| (\B_k^{\top} \B_k)^{-1} \B_k^{\top} \B_{\widehat{k}} \a_{\widehat{k}} \|_1 \\ \leq \| (\B_k^{\top} \B_k)^{-1}\B_k^{\top} \B_{\widehat{k}} \|_{1,1} \| \a_{\widehat{k}} \|_1.
\end{multline}
Using Lemma \ref{lem:matrix} with $\alpha = \epsilon'_q$ and $\beta = \epsilon'_q$, we have 
\begin{equation}
\| ( \B_k^{\top} \B_k)^{-1} \B_k^{\top} \B_{\widehat{k}} \|_{1,1} \leq \frac{ (1 + \epsilon'_q) \zeta_k }{ 1 - [ \epsilon'_q + (1+\epsilon'_q) \zeta_{k-1}] } .
\end{equation}

Thus, if the right hand side of the above equation is strictly less than one, \ie, the sufficient condition of the proposition is satisfied, then from \eqref{eq:interm1p} we have $\| \a_k \|_1 < \| \a_{\widehat{k}} \|_1$. Finally, using the definitions of $\a_k$ and $\a_{\widehat{k}}$, we obtain
\begin{equation}
\begin{split}
\sum_{i \in \Lambda_k}{\| \B[i] \breve{\c}^*[i] \|_q} = \| \a_k \|_1  < \| \a_{\widehat{k}} \|_1 = \sum_{i \in \Lambda_{\widehat{k}}}{\| \B[i] \widehat{\c}^*[i] \|_q},
\end{split}
\end{equation}
which implies that the condition of Theorem \ref{thm:SuffRedundant2} 
is satisfied. Thus, $P'_{\ell_q/\ell_1}$ is equivalent to $P'_{\ell_q/\ell_0}$. 
\ifCLASSOPTIONcaptionsoff
  \newpage
\fi

{\small
\footnotesize
\bibliographystyle{IEEEtran}
\bibliography{biblio/sparse,biblio/vidal,biblio/learning,biblio/vision,biblio/recognition,biblio/segmentation}
}

\end{document}